\newcommand{\footremember}[2]{\footnote{#2}
    \newcounter{#1}
    \setcounter{#1}{\value{footnote}}}
\newcommand{\footrecall}[1]{\footnotemark[\value{#1}]}
\setlist[enumerate,1]{label=(\roman*), leftmargin=2.2em}
\setlist[enumerate,2]{label=(\alph*)}
\setlist{nosep,topsep=0.1em}
\setlist[itemize,1]{label={\bfseries--}}
\newtheorem*{rep@theorem}{\rep@title}
\newcommand{\newreptheorem}[2]{\newenvironment{rep#1}[1]{\def\rep@title{\cref{##1}}\begin{rep@theorem}}{\end{rep@theorem}}}
\definecolor{darkblue}{rgb}{0,0,0.38}
\definecolor{darkred}{rgb}{0.6,0,0}
\definecolor{darkgreen}{rgb}{0.1,0.35,0}
      \name{author}{3}{}{%
        {{hash=35c4dd8c78688d23e38d9d12d00db39b}{%
           family={Black},
           familyi={B\bibinitperiod},
           given={Alexander\bibnamedelima E.},
           giveni={A\bibinitperiod\bibinitdelim E\bibinitperiod}}}%
        {{hash=05da83f239cdaccc4b643b62613f5ef8}{%
           family={Borgwardt},
           familyi={B\bibinitperiod},
           given={Steffen},
           giveni={S\bibinitperiod}}}%
        {{hash=9797ff1bca4f5fd8ea9e21ecbc65b195}{%
           family={Brugger},
           familyi={B\bibinitperiod},
           given={Matthias},
           giveni={M\bibinitperiod}}}%
      }
      \name{author}{4}{}{%
        {{hash=05da83f239cdaccc4b643b62613f5ef8}{%
           family={Borgwardt},
           familyi={B\bibinitperiod},
           given={Steffen},
           giveni={S\bibinitperiod}}}%
        {{hash=56bbdd6b387e7a3dcb9b694a2bf19a50}{%
           family={Brand},
           familyi={B\bibinitperiod},
           given={Cornelius},
           giveni={C\bibinitperiod}}}%
        {{hash=7fee98e584cedb36c7e4dd8cf17e05c9}{%
           family={Feldmann},
           familyi={F\bibinitperiod},
           given={Andreas\bibnamedelima Emil},
           giveni={A\bibinitperiod\bibinitdelim E\bibinitperiod}}}%
        {{hash=2ec1ab17014315d49d675cf1fb9ce642}{%
           family={Koutecký},
           familyi={K\bibinitperiod},
           given={Martin},
           giveni={M\bibinitperiod}}}%
      }
      \name{author}{3}{}{%
        {{hash=05da83f239cdaccc4b643b62613f5ef8}{%
           family={Borgwardt},
           familyi={B\bibinitperiod},
           given={Steffen},
           giveni={S\bibinitperiod}}}%
        {{hash=a756372e4024c4c8447a970d00f33f51}{%
           family={Finhold},
           familyi={F\bibinitperiod},
           given={Elisabeth},
           giveni={E\bibinitperiod}}}%
        {{hash=f3466ae02cfb06703aef10fc1c478243}{%
           family={Hemmecke},
           familyi={H\bibinitperiod},
           given={Raymond},
           giveni={R\bibinitperiod}}}%
      }
      \name{author}{5}{}{%
        {{hash=05da83f239cdaccc4b643b62613f5ef8}{%
           family={Borgwardt},
           familyi={B\bibinitperiod},
           given={Steffen},
           giveni={S\bibinitperiod}}}%
        {{hash=03d7dfec5e2881cb9f2e9d21297ad95f}{%
           family={Grewe},
           familyi={G\bibinitperiod},
           given={Weston},
           giveni={W\bibinitperiod}}}%
        {{hash=a380d48388b6f83c3cb34124262453ed}{%
           family={Kafer},
           familyi={K\bibinitperiod},
           given={Sean},
           giveni={S\bibinitperiod}}}%
        {{hash=2778d00b63611810839f7e97a581468a}{%
           family={Lee},
           familyi={L\bibinitperiod},
           given={Jon},
           giveni={J\bibinitperiod}}}%
        {{hash=776d1a5bcfdbd6a27b1ab456d6865857}{%
           family={Sanità},
           familyi={S\bibinitperiod},
           given={Laura},
           giveni={L\bibinitperiod}}}%
      }
      \name{author}{2}{}{%
        {{hash=05da83f239cdaccc4b643b62613f5ef8}{%
           family={Borgwardt},
           familyi={B\bibinitperiod},
           given={Steffen},
           giveni={S\bibinitperiod}}}%
        {{hash=fb767a8469c3332c4d5858dfd10b5220}{%
           family={Morrison},
           familyi={M\bibinitperiod},
           given={Angela},
           giveni={A\bibinitperiod}}}%
      }
      \name{author}{3}{}{%
        {{hash=05da83f239cdaccc4b643b62613f5ef8}{%
           family={Borgwardt},
           familyi={B\bibinitperiod},
           given={Steffen},
           giveni={S\bibinitperiod}}}%
        {{hash=1a90972b4c1b0ed516b92335ed95817e}{%
           family={Stephen},
           familyi={S\bibinitperiod},
           given={Tamon},
           giveni={T\bibinitperiod}}}%
        {{hash=377c73cd26a2f547f8cd70b7e3cddd61}{%
           family={Yusun},
           familyi={Y\bibinitperiod},
           given={Timothy},
           giveni={T\bibinitperiod}}}%
      }
      \name{author}{2}{}{%
        {{hash=05da83f239cdaccc4b643b62613f5ef8}{%
           family={Borgwardt},
           familyi={B\bibinitperiod},
           given={Steffen},
           giveni={S\bibinitperiod}}}%
        {{hash=6a235d814f5cea13741caaefccd844db}{%
           family={Viss},
           familyi={V\bibinitperiod},
           given={Charles},
           giveni={C\bibinitperiod}}}%
      }
      \name{author}{2}{}{%
        {{hash=05da83f239cdaccc4b643b62613f5ef8}{%
           family={Borgwardt},
           familyi={B\bibinitperiod},
           given={Steffen},
           giveni={S\bibinitperiod}}}%
        {{hash=6a235d814f5cea13741caaefccd844db}{%
           family={Viss},
           familyi={V\bibinitperiod},
           given={Charles},
           giveni={C\bibinitperiod}}}%
      }
      \name{author}{1}{}{%
        {{hash=ce0039288011d6fb178612b0a6f7b47e}{%
           family={Chvátal},
           familyi={C\bibinitperiod},
           given={V},
           giveni={V\bibinitperiod}}}%
      }
      \name{author}{2}{}{%
        {{hash=b5f2d9e5c828ab06dc6075900e27660e}{%
           family={Cardinal},
           familyi={C\bibinitperiod},
           given={Jean},
           giveni={J\bibinitperiod}}}%
        {{hash=3d44714f812de38e318c729443b937ac}{%
           family={Steiner},
           familyi={S\bibinitperiod},
           given={Raphael},
           giveni={R\bibinitperiod}}}%
      }
      \name{editor}{2}{}{%
        {{hash=d5eb3e004cb05d129ba4b64b593118a1}{%
           family={Del\bibnamedelima Pia},
           familyi={D\bibinitperiod\bibinitdelim P\bibinitperiod},
           given={Alberto},
           giveni={A\bibinitperiod}}}%
        {{hash=9f5d0a01b8b2cabe3a5e31aca28df3de}{%
           family={Kaibel},
           familyi={K\bibinitperiod},
           given={Volker},
           giveni={V\bibinitperiod}}}%
      }
      \name{author}{3}{}{%
        {{hash=4b9b522f6cf996f52d110e32c3e0bf1f}{%
           family={De\bibnamedelima Loera},
           familyi={D\bibinitperiod\bibinitdelim L\bibinitperiod},
           given={Jesús\bibnamedelima A.},
           giveni={J\bibinitperiod\bibinitdelim A\bibinitperiod}}}%
        {{hash=f3466ae02cfb06703aef10fc1c478243}{%
           family={Hemmecke},
           familyi={H\bibinitperiod},
           given={Raymond},
           giveni={R\bibinitperiod}}}%
        {{hash=2778d00b63611810839f7e97a581468a}{%
           family={Lee},
           familyi={L\bibinitperiod},
           given={Jon},
           giveni={J\bibinitperiod}}}%
      }
      \name{author}{4}{}{%
        {{hash=bec0fab657f75c0ee973680129b7f570}{%
           family={Dadush},
           familyi={D\bibinitperiod},
           given={Daniel},
           giveni={D\bibinitperiod}}}%
        {{hash=73564c884c03c25bcc46b03009ab13ce}{%
           family={Koh},
           familyi={K\bibinitperiod},
           given={Zhuan\bibnamedelima Khye},
           giveni={Z\bibinitperiod\bibinitdelim K\bibinitperiod}}}%
        {{hash=395ebeb9d6d27d48a9e59789a773d4b8}{%
           family={Natura},
           familyi={N\bibinitperiod},
           given={Bento},
           giveni={B\bibinitperiod}}}%
        {{hash=f2a988618f060c08b50aab7fc17faad3}{%
           family={Végh},
           familyi={V\bibinitperiod},
           given={László\bibnamedelima A.},
           giveni={L\bibinitperiod\bibinitdelim A\bibinitperiod}}}%
      }
      \name{editor}{2}{}{%
        {{hash=d157fb3adcb24061baeddc11ab95f68a}{%
           family={Aardal},
           familyi={A\bibinitperiod},
           given={Karen},
           giveni={K\bibinitperiod}}}%
        {{hash=776d1a5bcfdbd6a27b1ab456d6865857}{%
           family={Sanità},
           familyi={S\bibinitperiod},
           given={Laura},
           giveni={L\bibinitperiod}}}%
      }
      \name{author}{3}{}{%
        {{hash=4b9b522f6cf996f52d110e32c3e0bf1f}{%
           family={De\bibnamedelima Loera},
           familyi={D\bibinitperiod\bibinitdelim L\bibinitperiod},
           given={Jesús\bibnamedelima A.},
           giveni={J\bibinitperiod\bibinitdelim A\bibinitperiod}}}%
        {{hash=a380d48388b6f83c3cb34124262453ed}{%
           family={Kafer},
           familyi={K\bibinitperiod},
           given={Sean},
           giveni={S\bibinitperiod}}}%
        {{hash=776d1a5bcfdbd6a27b1ab456d6865857}{%
           family={Sanità},
           familyi={S\bibinitperiod},
           given={Laura},
           giveni={L\bibinitperiod}}}%
      }
      \name{author}{2}{}{%
        {{hash=b26b2a1e2f1d21cd07a3587ce2325e45}{%
           family={Frieze},
           familyi={F\bibinitperiod},
           given={Alan\bibnamedelima M.},
           giveni={A\bibinitperiod\bibinitdelim M\bibinitperiod}}}%
        {{hash=eab5e9b1d76716e05dcaec0cb60b7d57}{%
           family={Teng},
           familyi={T\bibinitperiod},
           given={Shang-Hua},
           giveni={S\bibinithyphendelim H\bibinitperiod}}}%
      }
      \name{author}{2}{}{%
        {{hash=3d981c215c9c575d31bcca98c6d6c69d}{%
           family={Garey},
           familyi={G\bibinitperiod},
           given={Michael\bibnamedelima R.},
           giveni={M\bibinitperiod\bibinitdelim R\bibinitperiod}}}%
        {{hash=8f805225d8e433c29a18a566d1f6acc6}{%
           family={Johnson},
           familyi={J\bibinitperiod},
           given={David\bibnamedelima S.},
           giveni={D\bibinitperiod\bibinitdelim S\bibinitperiod}}}%
      }
      \name{author}{1}{}{%
        {{hash=81b02532c8d31331f7719703c5b5de6b}{%
           family={Iwata},
           familyi={I\bibinitperiod},
           given={Satoru},
           giveni={S\bibinitperiod}}}%
      }
      \name{author}{1}{}{%
        {{hash=a380d48388b6f83c3cb34124262453ed}{%
           family={Kafer},
           familyi={K\bibinitperiod},
           given={Sean},
           giveni={S\bibinitperiod}}}%
      }
      \name{author}{1}{}{%
        {{hash=5290385c419686f6312edeb3578dc9cf}{%
           family={Karp},
           familyi={K\bibinitperiod},
           given={Richard\bibnamedelima M},
           giveni={R\bibinitperiod\bibinitdelim M\bibinitperiod}}}%
      }
      \name{author}{2}{}{%
        {{hash=9f5d0a01b8b2cabe3a5e31aca28df3de}{%
           family={Kaibel},
           familyi={K\bibinitperiod},
           given={Volker},
           giveni={V\bibinitperiod}}}%
        {{hash=33dc1889b4e298811eb9dcf8903a5310}{%
           family={Pfetsch},
           familyi={P\bibinitperiod},
           given={Marc\bibnamedelima E},
           giveni={M\bibinitperiod\bibinitdelim E\bibinitperiod}}}%
      }
      \name{author}{2}{}{%
        {{hash=b5d02debb74d639c0166107608b473fa}{%
           family={Kloks},
           familyi={K\bibinitperiod},
           given={Ton},
           giveni={T\bibinitperiod}}}%
        {{hash=7ba387557b1fb319cd41fd0041aed07c}{%
           family={Poon},
           familyi={P\bibinitperiod},
           given={Sheung-Hung},
           giveni={S\bibinithyphendelim H\bibinitperiod}}}%
      }
      \name{author}{3}{}{%
        {{hash=a380d48388b6f83c3cb34124262453ed}{%
           family={Kafer},
           familyi={K\bibinitperiod},
           given={Sean},
           giveni={S\bibinitperiod}}}%
        {{hash=d54b0f18cf39ff6df6c90295adcd1b0c}{%
           family={Pashkovich},
           familyi={P\bibinitperiod},
           given={Kanstantsin},
           giveni={K\bibinitperiod}}}%
        {{hash=776d1a5bcfdbd6a27b1ab456d6865857}{%
           family={Sanità},
           familyi={S\bibinitperiod},
           given={Laura},
           giveni={L\bibinitperiod}}}%
      }
      \name{author}{1}{}{%
        {{hash=f7474b0a67e4d4d93cb7180273d33c0b}{%
           family={Rispoli},
           familyi={R\bibinitperiod},
           given={Fred\bibnamedelima J},
           giveni={F\bibinitperiod\bibinitdelim J\bibinitperiod}}}%
      }
      \name{author}{1}{}{%
        {{hash=71a176c979f0a6bf7228668676f8c26a}{%
           family={Santos},
           familyi={S\bibinitperiod},
           given={Francisco},
           giveni={F\bibinitperiod}}}%
      }
      \name{author}{1}{}{%
        {{hash=776d1a5bcfdbd6a27b1ab456d6865857}{%
           family={Sanità},
           familyi={S\bibinitperiod},
           given={Laura},
           giveni={L\bibinitperiod}}}%
      }
      \name{author}{1}{}{%
        {{hash=776d1a5bcfdbd6a27b1ab456d6865857}{%
           family={Sanità},
           familyi={S\bibinitperiod},
           given={Laura},
           giveni={L\bibinitperiod}}}%
      }
      \name{author}{1}{}{%
        {{hash=25d5b8c6eeb2588f4cd646cfe8d0827b}{%
           family={Schrijver},
           familyi={S\bibinitperiod},
           given={Alexander},
           giveni={A\bibinitperiod}}}%
      }
      \name{author}{1}{}{%
        {{hash=427b228cd811abaff86593adef97babb}{%
           family={Smale},
           familyi={S\bibinitperiod},
           given={Steve},
           giveni={S\bibinitperiod}}}%
      }
      \name{author}{2}{}{%
        {{hash=1a90972b4c1b0ed516b92335ed95817e}{%
           family={Stephen},
           familyi={S\bibinitperiod},
           given={Tamon},
           giveni={T\bibinitperiod}}}%
        {{hash=377c73cd26a2f547f8cd70b7e3cddd61}{%
           family={Yusun},
           familyi={Y\bibinitperiod},
           given={Timothy},
           giveni={T\bibinitperiod}}}%
      }
\patchcmd\blx@bblinput{\blx@blxinit}
                      {\blx@blxinit
                      }{}{\fail}
\addspace\mkbibbrackets{\thefield{eprintclass}}}}}
\addspace\mkbibbrackets{\thefield{eprintclass}}}}}
\patchcmd{\@algocf@start}{\begin{lrbox}{\algocf@algobox}}{\rule{0.025\textwidth}{\z@}\begin{lrbox}{\algocf@algobox}\begin{minipage}{0.95\textwidth}}{}{}
\patchcmd{\@algocf@finish}{\end{lrbox}}{\end{minipage}\end{lrbox}}{}{}
\newcommand\appendtographicspath[1]{\g@addto@macro\Ginput@path{#1}}
\DeclareRobustCommand{\cev}[1]{{\mathpalette\do@cev{#1}}}
\newcommand{\do@cev}[2]{\vbox{\offinterlineskip
    \sbox\z@{$\m@th#1 x$}\ialign{##\cr
      \hidewidth\reflectbox{$\m@th#1\vec{}\mkern4mu$}\hidewidth\cr
      \noalign{\kern-\ht\z@}
      $\m@th#1#2$\cr
    }}}
\newtheorem*{theorem*}{Theorem}
\newtheorem{theorem}{Theorem}
\newtheorem{lemma}[theorem]{Lemma}
\newtheorem*{lemma*}{Lemma}
\newtheorem{conjecture}[theorem]{Conjecture}
\newtheorem{definition}[theorem]{Definition}
\newtheorem{remark}[theorem]{Remark}
\newtheorem{corollary}[theorem]{Corollary}
\crefname{theorem}{Theorem}{Theorems}
\crefname{conjecture}{Conjecture}{Conjectures}
\Crefname{lemma}{Lemma}{Lemmas}
\Crefname{claim}{Claim}{Claims}
\Crefname{fact}{Fact}{Facts}
\Crefname{remark}{Remark}{Remarks}
\Crefname{observation}{Observation}{Observations}
\Crefname{line}{Line}{Lines}
\Crefname{figure}{Figure}{Figures}
\newcommand{\horizontal}{\mathcal{H}}
\tikzset{matching/.style={red, line width=4pt}}
\newcommand{\convexpath}[2]{
[
    create hullnodes/.code={
        \global\edef\namelist{#1}
        \foreach [count=\counter] \nodename in \namelist {
            \global\edef\numberofnodes{\counter}
            \node at (\nodename) [draw=none,name=hullnode\counter] {};
        }
        \node at (hullnode\numberofnodes) [name=hullnode0,draw=none] {};
        \pgfmathtruncatemacro\lastnumber{\numberofnodes+1}
        \node at (hullnode1) [name=hullnode\lastnumber,draw=none] {};
    },
    create hullnodes
]
($(hullnode1)!#2!-90:(hullnode0)$)
\foreach [
    evaluate=\currentnode as \previousnode using \currentnode-1,
    evaluate=\currentnode as \nextnode using \currentnode+1
    ] \currentnode in {1,...,\numberofnodes} {
-- ($(hullnode\currentnode)!#2!-90:(hullnode\previousnode)$)
  let \p1 = ($(hullnode\currentnode)!#2!-90:(hullnode\previousnode) - (hullnode\currentnode)$),
    \n1 = {atan2(\y1,\x1)},
    \p2 = ($(hullnode\currentnode)!#2!90:(hullnode\nextnode) - (hullnode\currentnode)$),
    \n2 = {atan2(\y2,\x2)},
    \n{delta} = {-Mod(\n1-\n2,360)}
  in
    {arc [start angle=\n1, delta angle=\n{delta}, radius=#2]}
}
-- cycle
}
\newcommand{\flippath}[2]{
	[
	create pathvertexs/.code={
		\global\edef\namelistOuter{#1}
		\foreach [count=\counterOuter] \vertexname in \namelistOuter {
			\global\edef\numberofvertexs{\counterOuter};
			\coordinate(pathvertex\counterOuter) at (\vertexname);
		}
		\coordinate(pathvertex0) at (pathvertex\numberofvertexs);
	},
	create pathvertexs
	]
	\pgfmathtruncatemacro\endloop{\numberofvertexs};
	\foreach [
	evaluate=\currentvertex as \previousvertex using \currentvertex-1
	] \currentvertex in {2,...,\endloop} {
\fill \convexpath{pathvertex\currentvertex,pathvertex\previousvertex}{#2};
	}
}
\newcommand{\tower}[7]{

        \begin{scope}[every node/.style={thick,draw=black,fill=white,circle,minimum size=8, inner sep=1pt}]
            \ifbool{towerlabels}{
            \foreach \i in {0,...,#3} {
                \coordinate (#7a\i) at (#1 + \i*#5 , #2+\i*#6);
                \node at (#7a\i) {$a_{\i}^{#4}$};
                \coordinate (#7b\i) at (#1 + #6 + \i*#5, #2 - #5 +\i*#6);
                \node at (#7b\i) {$b_{\i}^{#4}$};

            }
        }
        {
            \foreach \i in {0,...,#3} {

				\coordinate (#7a\i) at (#1 + \i*#5 , #2+\i*#6);
				\node at (#7a\i) {};
				\coordinate (#7b\i) at (#1 + #6 + \i*#5, #2 - #5 +\i*#6);
				\node at (#7b\i) {};

            }
        }
        \end{scope}

		\begin{pgfonlayer}{background}
        \begin{scope}[line width=2pt]

		\ifthenelse{#3>0}{
            \foreach \i in {1,...,#3} {
                \pgfmathtruncatemacro{\previ}{\i - 1}
                \draw (#7a\i) -- (#7b\i);
                \draw (#7a\i) -- (#7a\previ);
                \draw (#7b\i) -- (#7b\previ);

            }}{}

            \draw (#7a0) -- (#7b0);
        \end{scope}
    \end{pgfonlayer}

        \stepcounter{towerIndex}

}
\newcommand{\towerRatio}{0.5}
\newcommand{\edgeWithTowers}[6]{

    \pgfmathsetmacro{\xdirection}{#3 - #1}
    \pgfmathsetmacro{\ydirection}{#4 - #2}

    \pgfmathsetmacro{\length}{veclen(\xdirection,\ydirection)}
    \pgfmathsetmacro{\normalxdirection}{\xdirection / ((#5))}
    \pgfmathsetmacro{\normalydirection}{\ydirection / ((#5))}
    \pgfmathsetmacro{\towerxdirection}{\towerRatio*\normalxdirection}
    \pgfmathsetmacro{\towerydirection}{\towerRatio*\normalydirection}

    \pgfmathsetmacro{\xoffset}{(\xdirection - #5*\towerxdirection)/ (#5+1)}
    \pgfmathsetmacro{\yoffset}{(\ydirection - #5*\towerydirection)/ (#5+1)}

    \foreach \j in {1,...,#5} {
        \pgfmathsetmacro{\factor}{\j-1}

        \pgfmathsetmacro{\xPos}{#1+\factor*\towerxdirection + \j*\xoffset}
        \pgfmathsetmacro{\yPos}{#2+\factor*\towerydirection + \j*\yoffset}
        \tower{\xPos}{\yPos}{#6}{T_{\thetowerIndex}}{-\towerydirection}{\towerxdirection}{\thetowerIndex}
    }
}
\newcommand{\edgeWithTowersDifferentHeight}[6]{

\pgfmathsetmacro{\xdirection}{#3 - #1}
\pgfmathsetmacro{\ydirection}{#4 - #2}

\pgfmathsetmacro{\length}{veclen(\xdirection,\ydirection)}
\pgfmathsetmacro{\normalxdirection}{\xdirection / ((#5))}
\pgfmathsetmacro{\normalydirection}{\ydirection / ((#5))}
\pgfmathsetmacro{\towerxdirection}{\towerRatio*\normalxdirection}
\pgfmathsetmacro{\towerydirection}{\towerRatio*\normalydirection}

\pgfmathsetmacro{\xoffset}{(\xdirection - #5*\towerxdirection)/ (#5+1)}
\pgfmathsetmacro{\yoffset}{(\ydirection - #5*\towerydirection)/ (#5+1)}

\global\edef\namelist{#6}
\foreach [count=\counter] \height in \namelist {
	\pgfmathsetmacro{\factor}{\counter-1}
	
	\pgfmathsetmacro{\xPos}{#1+\factor*\towerxdirection + \counter*\xoffset}
	\pgfmathsetmacro{\yPos}{#2+\factor*\towerydirection + \counter*\yoffset}
	\tower{\xPos}{\yPos}{\height}{T_{\thetowerIndex}}{-\towerydirection}{\towerxdirection}{\thetowerIndex}
}
} 
\title{\Large Complexity of polytope diameters via perfect matchings}
\author{
Christian N{\"o}bel\footremember{ETH}{
ETH Zurich, Zurich, Switzerland. R.S. funded by SNSF Ambizione Grant No. 216071, C.N. funded by the European Research Council (ERC) under the European Union's Horizon 2020 research and innovation programme (grant agreement No 817750).
Email: $\{$\href{mailto:cnoebel@ethz.ch}{cnoebel}, \href{mailto:rsteine@ethz.ch}{rsteine}$\}$@ethz.ch.}\and
Raphael Steiner\footrecall{ETH}}
\date{}
\begin{document}

\newcounter{towerIndex}
\renewcommand{\tilde}{\widetilde}

\maketitle

\begin{abstract}
 The \emph{Circuit diameter} of polytopes was introduced by Borgwardt, Finhold and Hemmecke \cite{BorgwardtFinholdHemmecke15} as a fundamental tool for the study of circuit augmentation schemes for linear programming and for estimating combinatorial diameters. Determining the complexity of computing the circuit diameter of polytopes was posed as an open problem by Sanità \cite{SanitaTalk2020} as well as by Kafer~\cite{Kafer22}, and was recently reiterated by Borgwardt, Grewe, Kafer, Lee and Sanità \cite{borgwardt2024hardness}.
 In this paper, we solve this problem by showing that computing the circuit diameter of a polytope given in halfspace-description is strongly \NP-hard. To prove this result, we show that computing the combinatorial diameter of the \emph{perfect matching polytope} of a bipartite graph is \NP-hard. This complements a result by Sanità (FOCS 2018,~\cite{Sanita18}) on the \NP-hardness of computing the diameter of fractional matching polytopes and implies the new result that computing the diameter of a $\{0,1\}$-polytope is strongly \NP-hard, which may be of independent interest. In our second main result, we give a precise graph-theoretic description of the \emph{monotone diameter} of perfect matching polytopes and use this description to prove that computing the monotone (circuit) diameter of a given input polytope is strongly \NP-hard as well.
\end{abstract}

\begin{tikzpicture}[overlay, remember picture, shift = {(current page.south east)}]
\coordinate (anchor) at (-2,0);
\node[anchor=south east, outer sep=5mm] at (anchor) {
    \begin{tikzpicture}[outer sep=0] \node (ERC) {\includegraphics[height=13mm]{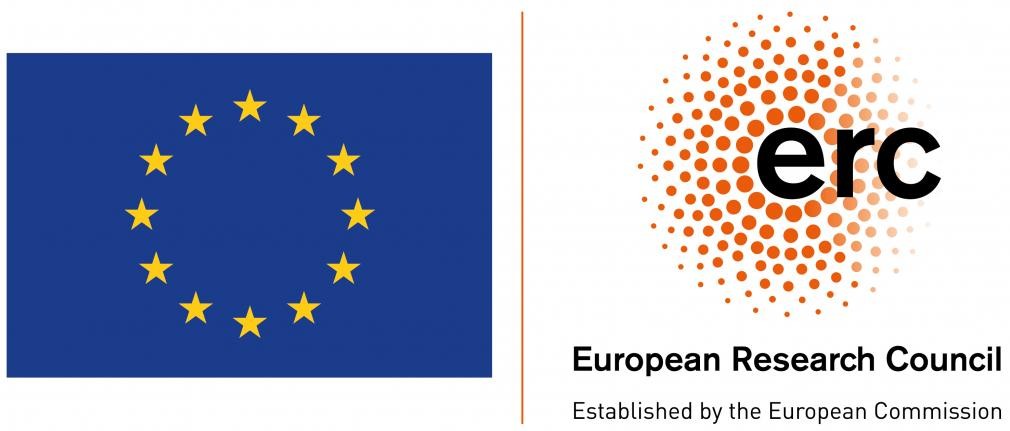}};
    \end{tikzpicture}
};
\end{tikzpicture}

\section{Introduction}

One of the most central open problems in the theory of mathematical optimization is \emph{Smale's 9th problem}, included in his list of open problems for the 21st century~\cite{Smale1998}. It asks for a strongly polynomial time algorithm for the linear programming problem, i.e., the algorithmic problem of optimizing a linear functional subject to linear inequality and equality constraints.

One of the canonical candidates that still holds potential for a positive resolution of Smale's problem is the famous \emph{simplex method}, whose basic form was invented by George Dantzig around 1950. Roughly speaking, to solve a given linear program, the simplex method updates an extreme point of the (polyhedral) feasible region while moving along its edges in such a way that the objective value is successively improved. In particular, the sequence of extreme points visited by the simplex method describes a path on the $1$-skeleton of the constraint-polyhedron. While efficient in practice, the theoretical complexity of the simplex method is a famous unsolved problem. In particular, the existence of a pivot rule that makes the simplex method run in strongly polynomial time remains open.

Suppose we would like to minimize a linear functional $\mathbf{c}^T\mathbf{x}$ over a polytope $P \subseteq \mathbb{R}^d$. Then the number of steps taken by an execution of the simplex algorithm using \emph{any} pivot-rule is lower-bounded by the minimum length of a path in the $1$-skeleton of $P$ connecting the starting vertex to the optimal solution. Since, by varying $\mathbf{c}$, every vertex of $P$ can be made the (unique) optimal solution of the corresponding linear program, and since the starting vertex in the simplex method can also be chosen arbitrarily, this shows that a lower bound for the complexity of the simplex method is given by the \emph{diameter} $\mathrm{diam}(P)$ of $P$, which is defined to be the diameter of the graph formed by the vertices and edges of $P$. In fact, since the simplex algorithm always follows a \emph{$\mathbf{c}$-monotone} path on the polytope, i.e., a path along which the objective value $\mathbf{c}^T\mathbf{x}$ is non-increasing, a stronger lower bound holds, which is called the \emph{monotone diameter} $\mathrm{mdiam}(P)$ of the polytope $P$.
To be precise, the monotone diameter of $P$ with respect to a cost vector $c$, $\mathrm{mdiam}(P,\mathbf{c})$, is defined to be the largest distance from any vertex of $P$ to a $\mathbf{c}$-minimal vertex, when only considering edge moves that do not strictly increase $\mathbf{c}^T\mathbf{x}$.
Then the monotone diameter of $P$, $\mathrm{mdiam}(P)$, is defined to be the maximum of $\mathrm{mdiam}(P,\mathbf{c})$ over all choices of $\mathbf{c}$.

As a consequence of the above discussion, a necessary condition for the existence of a strongly polynomial pivot rule is that the (monotone) diameter of every polytope $P\subseteq \mathbb{R}^d$ with $n$ facets is bounded by a polynomial function in $n$ and $d$. However, even this relaxed problem, known as the \emph{polynomial Hirsch conjecture}, remains unsolved. The classical \emph{Hirsch conjecture} stated that every $d$-dimensional polytope $P$ with $n$ facets satisfies $\mathrm{diam}(P)\le n-d$, but in 2012 Santos constructed a family of counterexamples to this conjecture~\cite{santos}.

\paragraph{Complexity of the diameter of polytopes.}

Our results address a long-standing open question in linear programming and discrete geometry, namely to determine the complexity of computing the (monotone) diameter of a given input polytope (as references, see e.g. Problem~10 in the survey article by Kaibel and Pfetsch~\cite{KaibelPfetsch2003} as well as the list of open problems on this problem posed by Frieze and Teng~\cite{FriezeTeng1994}).
More precisely, we will consider the following computational problems.

\begin{mdframed}[innerleftmargin=0.5em, innertopmargin=0.5em, innerrightmargin=0.5em, innerbottommargin=0.5em, userdefinedwidth=0.95\linewidth, align=center]
	{\textbf{DIAMETER}}
	\sloppy

	\textbf{Input:} A matrix $A\in\mathbb{Q}^{m\times d}$ and a vector $\mathbf{b}\in\mathbb{Q}^m$, defining the polytope $$P = \{\mathbf{x}\in \mathbb{R}^d|A\mathbf{x} \leq \mathbf{b}\}.$$

	\textbf{Output:} $\mathrm{diam}(P)$.
\end{mdframed}

\begin{mdframed}[innerleftmargin=0.5em, innertopmargin=0.5em, innerrightmargin=0.5em, innerbottommargin=0.5em, userdefinedwidth=0.95\linewidth, align=center]
	{\textbf{MONOTONE DIAMETER}}
	\sloppy

	\textbf{Input:} A matrix $A\in\mathbb{Q}^{m\times d}$ and a vector $\mathbf{b}\in\mathbb{Q}^m$, defining the polytope $$P = \{\mathbf{x}\in \mathbb{R}^d|A\mathbf{x} \leq \mathbf{b}\}.$$

	\textbf{Output:} $\mathrm{mdiam}(P)$.
\end{mdframed}

The classical result in this direction is due to Frieze and Teng~\cite{FriezeTeng1994} from 1994, who showed that \textsc{Diameter} is \emph{weakly} \NP-hard. In a more recent breakthrough, Sanità~\cite{Sanita18} strengthened this result by showing that the same problem is in fact \emph{strongly} \NP-hard.
More precisely, to prove this result, Sanità analyzed the diameter of so-called \emph{fractional matching polytopes} of graphs and proved that computing the diameter for this special class of polytopes is \NP-hard.

The first main contribution of this paper is a novel short proof of the strong \NP-hardness of \textsc{Diameter}.
This will be done by showing that computing the diameter of the perfect matching polytope of a bipartite graph is strongly \NP-hard.
The latter form a well-known class of combinatorial polytopes that have been widely studied in combinatorial optimization, in particular due to their strong connection to the maximum weight perfect matching problem and the network simplex method.
The following gives a formal definition.

\begin{definition}
	Let $G=(V,E)$ be a bipartite graph. For every perfect matching $M$ of $G$, define the vector $\chi^M\in \mathbb{R}^E$ that has a $1$-entry for every $e \in M$ and all other entries equal to $0$. The \emph{perfect matching polytope} $P_G$ associated with $G$ is the polytope in $\mathbb{R}^E$ defined as the convex hull
	$$P_G=\mathrm{conv}\{\chi^M|M\text{ perfect matching of }G\}.$$
\end{definition}

It is well-known (see, e.g. Chapter~18 in~\cite{schrijver2003}) that for every bipartite graph $G=(V,E)$, the polytope $P_G$ also admits a compact halfspace-encoding. Namely, an edge-indexed vector $(x_e)_{e \in E} \in \mathbb{R}^E$ belongs to $P_G$ if and only if the following hold.

\begin{align*}
	\sum_{e \ni v}{x_e} &= 1,  \qquad (\forall v \in V) \\
	x_e &\ge 0, \qquad (\forall e \in E).
\end{align*}

The following is our first main result.

\begin{theorem}\label{thm:perfectmatchingpolytope}
	The following problem is \NP-hard: Given as input a bipartite graph $G$, determine the diameter of the associated perfect matching polytope $P_G$.
\end{theorem}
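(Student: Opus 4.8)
Using the halfspace description of $P_G$ recalled above together with Chvátal's adjacency criterion for the perfect matching polytope \cite{CHVATAL1975}, two perfect matchings $M,M'$ are vertices of $P_G$ joined by an edge precisely when $M\triangle M'$ is a single alternating cycle. Hence $\mathrm{dist}_{P_G}(M,M')$ equals the minimum number $t$ of alternating-cycle ``flips'' transforming $M$ into $M'$; if $D_1,\dots,D_t$ are the flipped cycles then $M\triangle M' = D_1\triangle\cdots\triangle D_t$, where each $D_i$ must be alternating with respect to the intermediate matching $M\triangle D_1\triangle\cdots\triangle D_{i-1}$. I would first record the trivial bound $\mathrm{dist}_{P_G}(M,M')\le k$, where $k$ is the number of alternating cycles of $M\triangle M'$ (flip them one at a time), and then analyse precisely when and by how much this can be beaten.

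\textbf{Step 2: a structural lemma for distances.} The crux is that the cycles $C_1,\dots,C_k$ of $M\triangle M'$ can be ``merged'' and resolved cheaply only when $G$ admits an $M$-alternating cycle threading through several of them via edges outside $M\cup M'$. I would make this quantitative: the minimum number of flips is governed by an optimal grouping of $\{C_1,\dots,C_k\}$, where a group is processed cheaply exactly when its cycles can be stitched into one long alternating cycle of $G$ --- a Hamiltonicity/path-cover-type condition on an auxiliary graph whose vertices are the $C_i$ and whose edges record the available ``connector'' edges of $G$. This lemma should yield both sharp lower bounds (via a potential/monovariant argument on the evolving symmetric difference) and the template for the reduction.

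\textbf{Step 3: the reduction.} From a hard instance $I$ of a suitable \NP-hard problem --- I would try \textsc{Hamiltonian Path} or \textsc{3-SAT} encoded through the connector structure, or alternatively bootstrap from the shortest-path hardness of Cardinal and Steiner \cite{cardinal_steiner_23} --- I would build a bipartite graph $G_I$ containing a distinguished pair $M^\star,M'^\star$ whose symmetric difference is a prescribed union of cycles, arranged so that: (i) a short merging sequence for $M^\star,M'^\star$ exists iff $I$ is a yes-instance, so $\mathrm{dist}_{P_{G_I}}(M^\star,M'^\star)$ crosses a threshold $K=K(I)$; and (ii) every other pair of perfect matchings of $G_I$ is ``cheap'', e.g.\ by saturating $G_I$ with auxiliary gadget edges that keep all non-distinguished pairs within distance $<K$ of one another while being unable to shortcut the distinguished pair. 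Then $\mathrm{diam}(P_{G_I})\ge K$ iff $I$ is a yes-instance, giving the theorem.

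\textbf{Main obstacle.} The genuinely delicate part is the lower-bound direction of Steps~2--3: proving that in the no-case \emph{no} flip sequence --- including ones that exploit the auxiliary edges in unforeseen ways and temporarily move far from both $M^\star$ and $M'^\star$ --- can beat the threshold. This requires a robust invariant that simultaneously measures progress towards $M'^\star$ and certifies that merging many cycles at once is impossible without the missing Hamiltonian/satisfying structure. A secondary, more technical obstacle is property~(ii): one must verify that the flooding gadgetry keeps \emph{all} of the exponentially many remaining vertex pairs close \emph{without} inadvertently shortcutting $M^\star,M'^\star$, so that the diameter is attained exactly at the engineered pair.
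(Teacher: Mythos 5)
Your Step~1 matches the paper exactly: reduce to flip-distance via the Chv\'atal/Iwata adjacency criterion. Your high-level Steps~2--3 also point in the right direction --- the paper reduces from \textsc{Hamiltonian Cycle}, and your intuition that the distance is governed by whether many component cycles of $M\Delta M'$ can be ``stitched into one long alternating cycle of $G$'' is the correct mechanism. But you stop short of actually supplying the gadget and the invariant that make the lower bound work, and you yourself flag exactly this as the ``genuinely delicate part.'' So there is a genuine gap, and it is not a small one: the entire technical content of the theorem lives precisely where your proposal trails off.

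What the paper supplies that you are missing: a concrete ``tower'' gadget --- a height-$h$ ladder subdivision replacing an edge --- together with a monovariant. Starting from $H$, each vertex $v$ is split into $v_1,v_2$ and $t=4h$ towers are stacked on the edge $\{v_1,v_2\}$. The crucial Lemma~\ref{claim:path-properties} shows that any alternating cycle passing through a tower changes the set of ``horizontal rungs'' in the matching by exactly one element, and can only remove the \emph{topmost} rung. This gives the potential function you were looking for: to take the restriction of a matching to a single tower from ``all rungs present'' to ``top two rungs replaced by verticals'' requires at least $2h-2$ cycles through that tower, and cycles contained entirely inside a tower are controlled by having $t>$ budget many towers per edge. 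The averaging step --- each cycle in a flip sequence can simultaneously serve at most $n-1$ of the $n$ chosen towers, unless it contracts to a Hamiltonian cycle of $H$ --- then yields the $\frac{n}{n-1}(2h-2)$ lower bound when $H$ is non-Hamiltonian, while a Hamiltonian cycle gives a single alternating cycle threading through \emph{all} towers, yielding the $2h+4n$ upper bound. Your concern~(ii), that all other vertex pairs might inadvertently inflate the diameter, is also real and is handled inside the yes-direction: the paper first shows any perfect matching can be normalized to a canonical one by at most $2n$ flips, so the $2h+4n$ bound holds for \emph{every} pair when $H$ is Hamiltonian, not just a distinguished one. Finally, your suggested fallback of ``bootstrapping from Cardinal--Steiner'' would not work here; as the paper notes, that hardness is built on distance-two reductions, whereas the diameter problem forces one to reason about genuinely long paths, which is why the tower/monovariant machinery is needed.
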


To the best of our knowledge the complexity of \textsc{Monotone Diameter} has remained unknown thus far.
In our second main result we give a combinatorial description of the monotone diameter of perfect matching polytopes of bipartite graphs in terms of a special cycle packing invariant.
This description in particular generalizes a result of Rispoli~\cite{Rispoli1992} on the monotone diameter of the so-called assignment polytope $P_{K_{n,n}}$.
Using this description we then show that computing the monotone diameter of perfect matching polytopes is \NP-hard as well.

\begin{theorem}\label{thm:monotoneperfectmatchingpolytope}
	The following problem is \NP-hard: Given as input a bipartite graph $G$, determine the monotone diameter of the associated perfect matching polytope $P_G$.
\end{theorem}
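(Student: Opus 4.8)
The plan is to first pin down what the monotone diameter actually computes on a perfect matching polytope, and then to transfer the hardness from \cref{thm:perfectmatchingpolytope}. By the circuit-move characterization recalled above, $\mathrm{mdiam}(P_G)=\mathrm{mcdiam}(P_G)$, and a circuit walk on $P_G$ from $\chi^M$ to $\chi^{M'}$ is precisely a sequence of perfect matchings $M=M_0,M_1,\dots,M_k=M'$ with $M_{i-1}\triangle M_i$ a single alternating cycle for each $i$; for a fixed objective $\mathbf{c}\in\mathbb{R}^E$, the step $M_{i-1}\to M_i$ is $\mathbf{c}$-monotone exactly when the cycle $M_{i-1}\triangle M_i$, oriented from $M_{i-1}$-edges to $M_i$-edges, has nonpositive $\mathbf{c}$-cost. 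Thus $\mathrm{mdiam}(P_G,\mathbf{c})$ depends on $\mathbf{c}$ only through the preorder it induces on perfect matchings by objective value and through its set of optima, and I would show that the maximum over $\mathbf{c}$ is attained at objectives with a \emph{unique} optimal matching $N$ (e.g.\ a generic perturbation of $-\chi^N$).

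I expect the description to then read as follows: $\mathrm{mdiam}(P_G)$ is the maximum, over ordered pairs $(M,N)$ of perfect matchings of $G$, of the shortest length of a cycle-switch sequence from $M$ to $N$ that is realizable as a $\mathbf{c}$-monotone walk for \emph{some} objective $\mathbf{c}$ having $N$ as its unique optimum — and, crucially, this realizability-constrained distance is a purely combinatorial function of how the alternating cycles of $M\triangle N$ sit inside $G$, with no remaining reference to objectives. The upper bound here is easy: from $M$ one can always peel off the connected components of $M\triangle N$ one cycle at a time, each such switch being monotone for a witnessing objective. The real work is the matching lower bound, which must show that monotonicity genuinely forbids the ``cycle-merging shortcuts'' that, in the unrestricted skeleton, can make $\mathrm{dist}(M,N)$ strictly smaller than the number of components of $M\triangle N$; I would prove this by an exchange/uncrossing argument tracking, along any monotone walk, a potential that cannot drop by more than one per step once the witnessing objective is chosen appropriately.

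Granting such a description, the hardness follows by reduction from computing $\mathrm{diam}(P_G)$, which is \NP-hard by \cref{thm:perfectmatchingpolytope}. One inequality is free: if $M,N$ realize $\mathrm{diam}(P_G)$ and $\mathbf{c}=-\chi^N$, then $N$ is the unique $\mathbf{c}$-optimal vertex of $P_G$, so every $\mathbf{c}$-monotone walk from $M$ to a $\mathbf{c}$-optimum is in particular an (unrestricted) walk from $M$ to $N$, whence $\mathrm{mdiam}(P_G)\ge \mathrm{dist}(M,N)=\mathrm{diam}(P_G)$ for every bipartite $G$. For the reverse control I would compose the reduction behind \cref{thm:perfectmatchingpolytope} with a polynomial-time transformation $G\mapsto\widehat{G}$ that glues on an objective-forcing gadget (of the ``tower''/``funnel'' type appearing in the proof of \cref{thm:perfectmatchingpolytope}) engineered so that, through the description above, $\mathrm{mdiam}(P_{\widehat{G}})$ equals $\mathrm{diam}(P_G)$ up to a fixed additive constant. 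The hard part of this step is that one must bound $\mathrm{mdiam}(P_{\widehat{G}},\mathbf{c})$ for \emph{every} objective $\mathbf{c}$ and every starting vertex — I would do this by a case analysis on where the $\mathbf{c}$-optimum of $\widehat{G}$ lies relative to the gadget, first resolving the gadget part of each symmetric difference monotonically and then invoking the structural description on the core graph $G$.

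Finally, since the exact value of $\mathrm{mdiam}(P_{\widehat{G}})$ determines $\mathrm{diam}(P_G)$, the reduction is complete, giving \NP-hardness of computing the monotone diameter of a perfect matching polytope; together with the fact that $P_{\widehat{G}}$ has a $\{0,1\}$ halfspace-description and that $\mathrm{mdiam}=\mathrm{mcdiam}$ on such polytopes, this also yields the strong \NP-hardness of \textsc{Monotone Diameter} and \textsc{Monotone Circuit Diameter}. The two places I expect to be genuinely delicate are (i) establishing the combinatorial description, and in particular its lower-bound half ruling out cycle-merging shortcuts, and (ii) taming \emph{all} objectives at once on the gadgeted graph $\widehat{G}$.
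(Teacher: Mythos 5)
Your plan splits naturally into two pieces: (a) a combinatorial description of $\mathrm{mdiam}(P_G)$, and (b) a reduction that realizes hardness through that description. The paper's proof also has exactly these two pieces, but both of your pieces diverge from the paper's, and piece (b) has a genuine gap.

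On (a): you correctly guess that $\mathrm{mdiam}(P_G)$ should be governed by the cycle structure of symmetric differences $M\triangle N$ and that the delicate half is ruling out shortcuts. The paper's Lemma~\ref{lemma:monotone-to-symmetric-difference} gives the clean answer, $\mathrm{mdiam}(P_G)=\max_{M,N}\#\{\text{cycles of }M\triangle N\}$, and proves the shortcut-exclusion not by any exchange or potential argument, but by exhibiting for each pair $(M,M^*)$ an explicit objective $c$ with $c(e)=0$ on $M^*$, $c(e)=1$ on $M\setminus M^*$, and $c(e)=|V|$ on all remaining edges; this makes any matching that touches an edge outside $M\cup M^*$ already more expensive than $M$, so every $c$-monotone walk from $\chi^M$ to $\chi^{M^*}$ is forced to stay inside $M\triangle M^*$ and hence must flip the cycles of $M\triangle M^*$ one by one. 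Your proposed ``exchange/uncrossing argument tracking a potential'' is a different, considerably more involved route to the same fact, and you would need to make it precise; the cost-function trick is both simpler and what actually carries the proof.

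On (b): your reduction is the part that does not work as sketched. You want a gadgeted $\widehat{G}$ with $\mathrm{mdiam}(P_{\widehat{G}})=\mathrm{diam}(P_G)+O(1)$, engineered from the tower gadgets of \cref{thm:perfectmatchingpolytope}. But once you have the description from (a), $\mathrm{mdiam}(P_{\widehat{G}})$ is a \emph{maximum vertex-disjoint cycle packing} invariant of $\widehat{G}$ (subject to the leftover vertices having a perfect matching); it is insensitive to flip \emph{distances}. The tower gadgets in \cref{thm:perfectmatchingpolytope} are designed precisely so that the \emph{flip distance} between two specific matchings is large while their symmetric difference is just one cycle per touched tower --- they inflate $\mathrm{diam}$ without proportionally inflating the packing number, and these two invariants live on different scales in $G_H$ (order $h$ versus order $|V(G_H)|/4$). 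No amount of case analysis over objectives will bridge this: the characterization tells you the objective has already been optimized out, so what remains is a purely graph-theoretic packing quantity, and you give no construction that encodes $\mathrm{diam}(P_G)$ into a packing number. The paper instead takes the opposite tack: it specializes the characterization to show that $\mathrm{mdiam}(P_G)=|V|/4$ iff $G$ has a vertex-disjoint $4$-cycle cover (\cref{cor:trivial}), and then reduces \textsc{4-Dimensional Matching} to \textsc{Vertex-Disjoint $4$-Cycle Cover} via a twelve-vertex gadget per hyperedge (\cref{lem:4dm-reduction}). That reduction is tailored to the packing nature of $\mathrm{mdiam}$, which is exactly the feature your proposal works against.

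One smaller point: your observation $\mathrm{mdiam}(P_G)\ge\mathrm{diam}(P_G)$ is correct (and holds for any polytope), but it only gives a lower bound and cannot by itself transfer hardness, since the gap between the two quantities is uncontrolled.
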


In contrast to the fractional matching polytopes studied in \cite{Sanita18}, perfect matching polytopes of bipartite graphs are 0/1-polytopes defined by a totally unimodular constraint matrix.
This in particular has the following consequence.

\begin{corollary}
	Determining the diameter and the monotone diameter of 0/1-polytopes are both strongly \NP-hard problems, even when restricted to polytopes with a totally unimodular constraint matrix.
\end{corollary}

\paragraph{Applications to the circuit diameter.}
Our results have further implications for the complexity of a well-studied variant of the combinatorial diameter, known as the circuit-diameter of polytopes.
This concept was introduced by Borgwardt, Finhold and Hemmecke~\cite{BorgwardtFinholdHemmecke15}, motivated by the recently popular concepts of \emph{circuit moves} and \emph{circuit augmention schemes} which, as the simplex method, provide a framework for designing algorithms for linear programming. Roughly speaking, circuit moves extend the simplex paradigm of traversing the edges of a polytope, by allowing to follow additional directions through the polytope, called \emph{circuits}.
The \emph{circuit diameter} of a polytope $P$, denoted $\mathrm{cdiam}(P)$, is the maximum  distance between any pair of vertices of $P$ when moving along circuits.
Several pivot-rules and algorithms for linear and combinatorial optimization based on circuit moves have been proposed and studied recently, see e.g.~\cite{deLoera2015,BORGWARDT2020decent,BORGWARDT2021DECENT,DeLoeraKaferSanita22,Dadush22,borgwardt2023combinatorial}.
For the sake of completeness we briefly recall the formal definition of circuit moves and circuit diameter.

\begin{definition}[cf. Definitions~1--5 in~\cite{DeLoeraKaferSanita22}]\label{def:circuitmove}
	Let $P=\{\mathbf{x}\in \mathbb{R}^d|A\mathbf{x}=\mathbf{b}, B\mathbf{x} \le \mathbf{d}\}$ be a polyhedron.

	\begin{enumerate}
		\item A \emph{circuit} of $P$ is a vector $\mathbf{g} \in \mathbb{R}^d\setminus \{\mathbf{0}\}$ such that
		\begin{itemize}
			\item $A\mathbf{g}=0$, and
			\item $B\mathbf{g}$ is inclusion-wise support-minimal in the collection $\{B\mathbf{y}|A\mathbf{y}=\mathbf{0}, \mathbf{y} \neq \mathbf{0}\}$.
		\end{itemize}
		\item Given a point $\mathbf{x} \in P$, a \emph{circuit move} at $\mathbf{x}$ consists of selecting a circuit $\mathbf{g}$ of $P$ and moving to a new point $\mathbf{x}'=\mathbf{x}+\alpha \mathbf{g}$, where $\alpha > 0$ is \emph{maximal} w.r.t.\ $\mathbf{x}+\alpha \mathbf{g} \in P$.
		\item A \emph{circuit walk} of length $k$ is a sequence $\mathbf{x}_0,\mathbf{x}_1,\ldots,\mathbf{x}_k$ of points in $P$ such that for every $i=1,\ldots,k$, we have that $\mathbf{x}_i$ is obtained from $\mathbf{x}_{i-1}$ by a circuit move.
		\item Given two points $\mathbf{x},\mathbf{x}'\in P$, the \emph{circuit distance} $\mathrm{cdist}(\mathbf{x},\mathbf{x}')$ is defined as the minimum length of a circuit walk that starts in $\mathbf{x}$ and ends in $\mathbf{x}'$.

		\item The \emph{circuit diameter} of a polytope $P$, denoted $\mathrm{cdiam}(P)$, is the maximum circuit distance among all pairs of vertices of $P$.
	\end{enumerate}
\end{definition}

In the same way as the ordinary diameter of a polytope $P$ forms a lower bound on the run-time of the simplex-method for linear programs with feasible region $P$, the circuit diameter $\mathrm{cdiam}(P)$ forms a lower bound on the time complexity of circuit augmentation schemes for linear optimization over $P$. This motivates bounding the circuit diameter of $d$-dimensional polytopes with $n$ facets by a polynomial function in $n$ and $d$. While this remains open, DeLoera, Kafer and Sanità~\cite{DeLoeraKaferSanita22} could recently prove the remarkable result that the circuit diameter of a polytope is polynomially bounded in terms of $n$, $d$ and the maximum encoding-length among the coefficients in its description.  Interestingly, the analogue of the classical Hirsch conjecture for circuit diameters, proposed in 2015 by Borgwardt, Finhold and Hemmecke~\cite{BorgwardtFinholdHemmecke15}, has still neither been proved nor disproved.
\begin{conjecture}[\cite{BorgwardtFinholdHemmecke15}]\label{con:circuitdiameter}
	Every polytope $P\subseteq \mathbb{R}^d$ with $n$ facets satisfies $\mathrm{cdiam}(P)\le n-d$.
\end{conjecture}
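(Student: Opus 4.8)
\medskip
\noindent Since \Cref{con:circuitdiameter} is a long-standing open problem, I can only propose the general shape of an attack and point to where it stalls. The standard reduction, going back to Borgwardt, Finhold and Hemmecke~\cite{BorgwardtFinholdHemmecke15}, fixes two vertices $\mathbf{u},\mathbf{v}$ of $P$ and asks for a short circuit walk from $\mathbf{u}$ to $\mathbf{v}$; the natural way to build one is greedily, in the manner of a circuit-augmentation scheme for $\min\{\mathbf{c}^{T}\mathbf{x}:\mathbf{x}\in P\}$ with $\mathbf{c}$ chosen so that $\mathbf{v}$ is the unique optimum. The plan would be: (i) at the current point $\mathbf{x}_{i}$ select a principled circuit direction --- for instance a discrete steepest-descent direction, for which De Loera, Hemmecke and Lee~\cite{deLoera2015} proved iteration bounds in terms of the Graver basis; (ii) prove that each maximal circuit move makes some facet of $P$ that was slack at $\mathbf{x}_{i}$ become tight and remain tight, so that the walk descends along a strictly decreasing chain of faces; and (iii) recurse inside that face, charging one circuit step per lost dimension, so that the total length telescopes to a Hirsch-type bound $n-d$.

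\medskip
\noindent The crux --- and the reason the conjecture has resisted proof --- is step (ii). An edge step of the simplex method leaves the current face only through a facet of that face, and that facet stays active afterwards; a \emph{maximal} circuit move enjoys no such guarantee, since a circuit need not be a feasible direction of any single proper face and a polytope may have exponentially many circuits. A circuit walk can therefore cut through the relative interior, activate a facet at one step and deactivate it at the next, and generally \emph{wander} in a way that defeats the monotone-descent-through-faces argument underlying the combinatorial Hirsch bound. Known partial results bypass this obstacle rather than resolve it: the bound of De Loera, Kafer and Sanità~\cite{DeLoeraKaferSanita22} is polynomial in $n$, $d$ and the encoding length of the coefficients, while the bound of Dadush, Koh, Natura and Végh~\cite{Dadush22} is governed by the circuit imbalance measure --- both numeric quantities, not the purely combinatorial $n-d$.

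\medskip
\noindent Accordingly, a full proof seems to require a genuinely combinatorial potential function that is guaranteed to decrease along every circuit walk (or at least along some canonically chosen one), with no dependence on the numerical data in $A$ and $B$; one would then show that this potential is at most $n-d$ at the start and reaches its minimum at $\mathbf{v}$. Before chasing the general case I would stress-test any candidate potential on the settings where \Cref{con:circuitdiameter} is already known --- dual transportation polyhedra~\cite{BorgwardtFinholdHemmecke15}, the Klee--Walkup polyhedron~\cite{StephenYusun15}, and the combinatorial polytopes of Kafer, Pashkovich and Sanità~\cite{KaferPashkovichSanita19} --- and only then attempt the bound in full generality. Producing such a potential, or proving that none exists, is precisely the open part.
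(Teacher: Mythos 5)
This statement is \Cref{con:circuitdiameter}, an open conjecture of Borgwardt, Finhold and Hemmecke that the paper cites as motivation but does not prove --- indeed, the paper's own contribution is the orthogonal hardness result that \emph{computing} $\mathrm{cdiam}(P)$ is strongly \NP-hard, not a resolution of the bound $\mathrm{cdiam}(P)\le n-d$. You correctly recognize this and do not fabricate a proof; instead you sketch where a simplex-style ``descend through a chain of faces'' argument would stall for circuit walks (maximal circuit moves need not preserve a tight facet and can wander through the relative interior), and you accurately place the known partial results of De Loera, Kafer and Sanit\`a~\cite{DeLoeraKaferSanita22} and of Dadush, Koh, Natura and V\'egh~\cite{Dadush22} as giving bounds that depend on numeric data rather than the purely combinatorial quantity $n-d$. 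This matches the paper's own framing of the conjecture's status in the introduction. In short: there is no paper proof to compare against, your assessment that the conjecture is open is correct, and your diagnosis of the obstruction is sound.
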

Many more results on the circuit diameter, including general bounds as well as for special classes of polytopes, have been obtained recently, see~\cite{BorgwardtFinholdHemmecke15,StephenYusun15,BorgwardtStephenYusun18,KaferPashkovichSanita19,Dadush22,black2023circuit} for only a few examples.

The computational complexity of computing the Circuit Diameter, and in particular whether computing the {Circuit Diameter} is \NP-hard, was raised as an open problem by Sanità~\cite{SanitaTalk2020} as well as by Kafer~\cite{Kafer22}. Very recently, the problem was reiterated by Borgwardt et al.~\cite{borgwardt2024hardness}.
This problem was a further motivation for our study of perfect matching polytopes.  Namely, as already observed in~\cite{BORGWARDT2022,cardinal_steiner_23}, the notions of (monotone)\footnote{The monotone circuit diameter is defined analogously to the monotone diameter, when replacing edge moves by circuit moves} circuit diameter and (monotone) combinatorial diameter agree on these polytopes.
\begin{theorem*}[cf. Corollary~4 in~\cite{BORGWARDT2022}, Lemma~2 in~\cite{cardinal_steiner_23}]\label{thm:circuitmoves}
	Let $G$ be a bipartite graph, let $\mathbf{x}$ be a vertex of $P_G$, and let $\mathbf{x}'\in P_G$. Then $\mathbf{x}'$ can be obtained from $\mathbf{x}$ by a circuit move if and only if $\mathbf{x}'$ is also a vertex of $P_G$ and adjacent to $\mathbf{x}$ on the skeleton of $P_G$.
\end{theorem*}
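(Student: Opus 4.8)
The plan is to deduce the statement from two ingredients: an explicit description of the circuits of $P_G$ in terms of the cycles of $G$, and the classical description of the $1$-skeleton of the (perfect) matching polytope (cf.~\cite{CHVATAL1975}), namely that two perfect matchings $M,M'$ index adjacent vertices of $P_G$ exactly when $M\triangle M'$ is a single cycle.

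First I would pin down the circuits of $P_G$ relative to the halfspace description above, where the equality part is $A\mathbf{x}=\mathbf{1}$ for $A$ the vertex--edge incidence matrix of $G$ and the inequality part is $-\mathbf{x}\le\mathbf{0}$, so that $B=-I$. Since $B$ is minus the identity, support-minimality of $B\mathbf{g}$ is the same as support-minimality of $\mathbf{g}$, hence the circuits of $P_G$ are exactly the support-minimal nonzero vectors of $\ker A$. As $G$ is bipartite, every cycle $C$ of $G$ is even and therefore carries a $\{0,\pm1\}$-vector $\mathbf{g}_C$ supported on $E(C)$ whose signs alternate along $C$; one checks that $\mathbf{g}_C\in\ker A$, that these vectors span $\ker A$, and that the support-minimal nonzero members of $\ker A$ are precisely the $\pm\mathbf{g}_C$ ranging over simple cycles $C$. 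This is the standard identification of $\ker A$ with the cycle space of $G$ (e.g.\ via a row-negation turning $A$ into an oriented incidence matrix); bipartiteness is exactly what makes the alternating sign pattern close up consistently around each cycle.

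Next, fixing a vertex $\mathbf{x}=\chi^M$ and a circuit $\mathbf{g}=\mathbf{g}_C$, I would analyze the circuit move $\mathbf{x}\mapsto\mathbf{x}+\alpha\mathbf{g}$. The equalities are preserved since $A\mathbf{g}=\mathbf{0}$, so feasibility is governed solely by $x_e+\alpha g_e\ge 0$ for $e\in E(C)$; as $x_e\in\{0,1\}$ and $|g_e|=1$ on $E(C)$, some $\alpha>0$ is feasible if and only if $x_e=1$, i.e.\ $e\in M$, for every edge $e\in E(C)$ with $g_e=-1$. Since $M$ is a matching, if the negative edges of $\mathbf{g}$ (which form every other edge of $C$) all lie in $M$, then no two consecutive edges of $C$ lie in $M$, so $C$ is $M$-alternating with $E(C)\cap M$ equal to the negative edges of $\mathbf{g}$; conversely, if $C$ is $M$-alternating we may choose the sign of $\mathbf{g}$ so that its negative edges are exactly $E(C)\cap M$. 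In that case the only binding inequalities are $x_e-\alpha\ge 0$ over $e\in E(C)\cap M$, all with $x_e=1$, so the maximal step is $\alpha=1$ and $\mathbf{x}+\mathbf{g}=\chi^{M\triangle C}$, again a vertex of $P_G$. Hence the circuit moves available at $\chi^M$ are exactly the transitions $\chi^M\mapsto\chi^{M\triangle C}$ over all $M$-alternating cycles $C$ of $G$.

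Finally I would combine this with the skeleton description: $\chi^{M'}$ is a vertex of $P_G$ adjacent to $\chi^M$ if and only if $M\triangle M'$ is a single cycle $C$, which is necessarily $M$-alternating, i.e.\ $M'=M\triangle C$ for some $M$-alternating cycle $C$; this is exactly the family of targets of circuit moves from $\chi^M$ identified above, giving both directions of the theorem. I expect the only genuinely delicate step to be the first one, namely correctly characterizing the circuits of $P_G$ and in particular verifying that an alternating $\pm1$ cycle vector is support-minimal in $\ker A$ (the point where bipartiteness enters); everything afterwards is a short feasibility computation together with the cited adjacency result.
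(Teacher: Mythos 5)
Your proof is correct, and it is worth noting that the paper itself does not prove this theorem: it is stated as a known fact and attributed via ``cf.''\ to Corollary~4 of Borgwardt--Viss and Lemma~2 of Cardinal--Steiner, so there is no ``paper's own proof'' to compare against. Your argument supplies a self-contained derivation along the expected lines: since the inequality block of the standard presentation of $P_G$ is $-I\mathbf{x}\le \mathbf{0}$, circuits are support-minimal nonzero kernel vectors of the $0/1$ incidence matrix $A$; since the support subgraph of any nonzero kernel vector has minimum degree at least $2$ it contains a cycle, so support-minimal kernel vectors are supported on single simple cycles, and the degree constraints around such a cycle pin the vector down to a scalar multiple of the alternating $\pm 1$ cycle vector $\mathbf{g}_C$ (here bipartiteness, i.e.\ evenness of $C$, is exactly what makes the alternation consistent). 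Your feasibility computation then correctly shows that a circuit move from $\chi^M$ along $\pm\mathbf{g}_C$ is possible precisely when $C$ is $M$-alternating with the matching edges of $C$ on the negative side of the chosen sign, in which case $\alpha_{\max}=1$ and the move lands on $\chi^{M\triangle C}$; combined with Chvátal's characterization of adjacency on $P_G$ this gives both implications. One small point you could make explicit (it is implicit in ``precisely the $\pm\mathbf{g}_C$'') is that $\mathbf{g}_C$ itself is support-minimal because any proper subset of $E(C)$ is a forest, whose only kernel vector is zero; otherwise the write-up is complete and accurate.
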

In particular, combining this with \cref{thm:perfectmatchingpolytope} and \cref{thm:monotoneperfectmatchingpolytope}, we can immediately answer the aforementioned question on the \NP-hardness of the Circuit Diameter affirmatively:

\begin{theorem}\label{thm:main}
	Computing the {(monotone) Circuit Diameter} of a polytope is strongly \NP-hard, even when restricted to the class of perfect matching polytopes of bipartite graphs.
\end{theorem}

\paragraph{Related work.} Cardinal and Steiner~\cite{cardinal_steiner_23}  proved that (monotone) circuit distances in perfect matching polytopes cannot be approximated to within any constant factor, unless $\P=\NP$. In a similar direction, Borgwardt et al.~\cite{borgwardt2024hardness} recently proved that computing exact circuit distances is NP-hard for $0/1$-network flow polytopes.  While conceptually related to our \cref{thm:perfectmatchingpolytope,thm:monotoneperfectmatchingpolytope}, the results are incomparable. In particular, one major difference between the approaches in~\cite{cardinal_steiner_23,borgwardt2024hardness} and the diameter problem considered here is that the reductions in~\cite{cardinal_steiner_23,borgwardt2024hardness} are based on \emph{very} short paths, namely of length two, while our reduction here naturally needs to deal with \emph{maximal distances} and thus with rather long paths in the perfect matching polytope. Strengthening \cref{thm:perfectmatchingpolytope,thm:monotoneperfectmatchingpolytope} to an inapproximability result for the diameter would be quite interesting, but this seems to be outside the reach of the methods used in our proof.

\section{Overview of the reductions}

In this section, we present the reductions used to obtain our main results.
The technical proofs will be postponed to later sections.

\subsection{Reduction for \textsc{Diameter}}\label{sec:reduction-diameter}
We start by stating the simple characterization of adjacency on perfect matching polytopes of bipartite graphs, which reduces the analysis of the diameter of these polytopes to a purely graph-theoretic issue.

\begin{lemma*}[cf.~\cite{CHVATAL1975}, \cite{IWATA2002}]
	Let $G=(V,E)$ be a bipartite graph.
	Consider two vertices $\mathbf{x}=\chi^M$ and $\mathbf{y}=\chi^{M'}$ of $P_G$ corresponding to perfect matchings $M$ and $M'$ in $G$.
	Then $\mathbf{x}$ and $\mathbf{y}$ are adjacent in the skeleton of $P_G$ if and only if the symmetric difference $M\Delta M'$ consists of the edge-set of exactly one cycle in $G$.
\end{lemma*}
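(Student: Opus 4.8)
The plan is to combine the standard description of edges of a polytope with two elementary structural facts about symmetric differences of perfect matchings. Recall that for a bipartite graph $G=(V,E)$ the vertices of $P_G$ are precisely the incidence vectors $\chi^M$ of perfect matchings $M$ (the classical consequence of total unimodularity of the incidence matrix, already invoked in the excerpt), and that two distinct vertices $u,v$ of a polytope $P$ are adjacent if and only if the segment $[u,v]$ is a face of $P$; equivalently, if and only if the minimal face of $P$ containing the midpoint $\tfrac12(u+v)$ equals $[u,v]$, i.e. if and only if no vertex of $P$ other than $u$ and $v$ occurs with positive coefficient in any representation of $\tfrac12(u+v)$ as a convex combination of vertices. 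Recall also that for perfect matchings $M,M'$ the symmetric difference $M\Delta M'$ is a vertex-disjoint union of cycles $C_1,\dots,C_k$, each alternating between $M$ and $M'$ (hence of even length, since $G$ is bipartite), with $M\neq M'$ precisely when $k\ge 1$. So the statement amounts to: $\mathbf{x}=\chi^M$ and $\mathbf{y}=\chi^{M'}$ are adjacent on the skeleton of $P_G$ if and only if $k=1$.

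For the ``if'' direction I would assume $k=1$ and write $C=C_1$ (its edge set). The midpoint $z=\tfrac12(\chi^M+\chi^{M'})$ satisfies $z_e=1$ for $e\in M\cap M'$, $z_e=\tfrac12$ for $e\in C$, and $z_e=0$ otherwise, so the only inequality constraints of $P_G$ tight at $z$ are the constraints $x_e\ge 0$ with $e\notin C\cup(M\cap M')$. Hence the minimal face $F$ of $P_G$ containing $z$ is $F=\{x\in P_G: x_e=0 \text{ for every } e\notin C\cup(M\cap M')\}$, and every vertex of $F$ has the form $\chi^N$ with $N$ a perfect matching of $G$ satisfying $N\subseteq C\cup(M\cap M')$. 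I would then show there are exactly two such $N$: an $M$-edge meeting a vertex of $C$ must lie on $C$ (each vertex meets exactly one $M$-edge, and $C$ alternates), so $M\cap M'$ already perfectly matches $V\setminus V(C)$, forcing $N$ to agree with $M\cap M'$ outside $V(C)$; and inside $V(C)$, $N$ must be one of the exactly two perfect matchings of the even cycle $C$. Thus the vertex set of $F$ is $\{\chi^M,\chi^{M'}\}$, so $F=[\chi^M,\chi^{M'}]$ is a $1$-dimensional face, i.e. an edge, and $\chi^M,\chi^{M'}$ are adjacent.

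For the ``only if'' direction I would argue by contraposition: assume $k\ge 2$ and put $M'':=M\Delta C_1$ and $M''':=M'\Delta C_1$. Since $C_1$ is an $M$-alternating (respectively $M'$-alternating) cycle, $M''$ and $M'''$ are again perfect matchings of $G$, so $\chi^{M''},\chi^{M'''}$ are vertices of $P_G$. A routine edge-by-edge verification, distinguishing the cases $e\in M\cap M'$, $e\notin M\cup M'$, $e\in C_1$, and $e\in C_j$ with $j\ge 2$, gives $\chi^M+\chi^{M'}=\chi^{M''}+\chi^{M'''}$. As $C_1\neq\emptyset$ and $k\ge 2$, neither $M''$ nor $M'''$ coincides with $M$ or $M'$. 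Consequently $\tfrac12(\chi^M+\chi^{M'})$ is a convex combination of the two vertices $\chi^{M''},\chi^{M'''}$, both different from $\chi^M$ and $\chi^{M'}$; since a vertex of a polytope cannot lie in the relative interior of a segment joining two other points of the polytope, this forces $[\chi^M,\chi^{M'}]$ not to be a face of $P_G$, so $\chi^M$ and $\chi^{M'}$ are non-adjacent.

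The only genuinely delicate point is in the ``if'' direction: correctly identifying the minimal face $F$ containing the midpoint and then proving that $F$ has exactly two vertices. The latter rests on the two structural facts that $M\cap M'$ saturates $V\setminus V(C)$ and that an even cycle has precisely two perfect matchings --- the second being exactly where bipartiteness of $G$ enters (for general $G$ the stated outer description of $P_G$, and hence the whole argument, would also fail). The ``only if'' direction is pure bookkeeping: checking the identity $\chi^M+\chi^{M'}=\chi^{M''}+\chi^{M'''}$ case by case and confirming that $M''$ and $M'''$ are vertices distinct from $M$ and $M'$. Both directions additionally use, as a black box, the standard fact that the vertices of $P_G$ for bipartite $G$ are exactly the perfect-matching incidence vectors.
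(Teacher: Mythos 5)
Your proof is correct. Note that the paper itself gives no proof of this lemma --- it is stated as known (with a ``cf.'' to Chv\'atal and to Iwata) and used as a black box --- so there is no in-paper argument to compare against. Your midpoint-decomposition argument is the standard route: in the ``if'' direction you correctly identify the minimal face of $P_G$ containing $\tfrac12(\chi^M+\chi^{M'})$ as consisting exactly of the perfect matchings contained in $C\cup(M\cap M')$, and then use that $M\cap M'$ saturates $V\setminus V(C)$ and that an even cycle has precisely two perfect matchings to conclude this face has only the two vertices $\chi^M,\chi^{M'}$; in the ``only if'' direction the identity $\chi^M+\chi^{M'}=\chi^{M\Delta C_1}+\chi^{M'\Delta C_1}$ exhibits a nontrivial convex decomposition of the midpoint, which rules out adjacency. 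This is essentially Chv\'atal's original adjacency criterion for $0/1$-polytopes, and it is a clean, complete argument.
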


From the above we can see that the problem of determining the (combinatorial) diameter of the perfect matching polytope of $G$ boils down to determining the maximal distance of two matchings in $G$, when one is allowed to flip\footnote{If $C$ is a cycle in $G$ that is alternating w.r.t.~the current perfect matching $M$, then flipping $C$ means exchanging matching and non-matching edges along $C$, i.e., moving from $M$ to the new perfect matching given by the symmetric difference $M\Delta C$.} a single alternating cycle at a time.

We will reduce the \textsc{Hamiltonian Cycle Problem} to the problem of determining the diameter of the perfect matching polytope $P_G$ of a given graph $G$. Recall that the Hamiltonian cycle problem is the algorithmic decision problem of deciding whether or not an input graph contains a Hamiltonian cycle, and that this problem is well-known to be \NP-complete~\cite{GareyJohnson1990}.

Let an input graph $H = (V_H,E_H)$ be given, for which we would like to determine whether it contains a Hamiltonian cycle.
Set $n=|V_H|$. In the following we will describe the construction of a bipartite graph $G_H$ based on $H$ such that we can determine whether $H$ is Hamiltonian based on knowing the value of the diameter of the perfect matching polytope $P_{G_H}$ associated with $G_H$.

Before we can state the final construction, it will be convenient to first introduce an auxiliary gadget, which we will call a \emph{tower} in the following.
For the definition of a tower, consider any edge $e$ between vertices $v$ and $w$ in some graph.
A \emph{tower $T$ of height $h$ on $e$} is obtained by removing the edge $\{v,w\}$ and replacing it by $2h+2$ vertices $a_i, b_i$ for $i\in \{0, \dots, h\}$ together with edges
\[
E_T = \{\{a_i, b_i\} \colon i \in \{0,\dots, h\}\} \cup \{\{a_i, a_{i+1}\}, \{b_i, b_{i+1}\}\colon i \in \{0,...,h-1\}\} \cup \{\{v,a_0\}, \{w, b_0\}\}.
\]

\begin{figure}[ht]
	\begin{center}
		\scalebox{0.75}{
			\begin{tikzpicture}[scale=2]

				\begin{scope}[every node/.style={thick,draw=black,fill=white,circle,minimum size=17, inner sep=2pt}]
					\node (v0) at (-4,0){};
					\node (w0) at (-2.5,0){};

					\node  (v) at (-0.5,0) {};
					\node  (w) at (1.5,0) {};
				\end{scope}

				\begin{scope}
					\node at (v0){$v$};
					\node at (w0) {$w$};
					\node at (v){$v$};
					\node at (w) {$w$};
				\end{scope}

				\begin{scope}[line width=2pt]
					\draw (v0) -- (w0);
					\draw (v) -- (w);
				\end{scope}

				\tower{0}{0}{3}{}{0}{1}{}

				\draw[very thick, dashed, -stealth] (-2,0) -- (-1,0);
			\end{tikzpicture}
		}
	\end{center}
	\caption{A tower of height $3$ on the edge $\{v,w\}$}
	\label{fig:example-tower}
\end{figure}
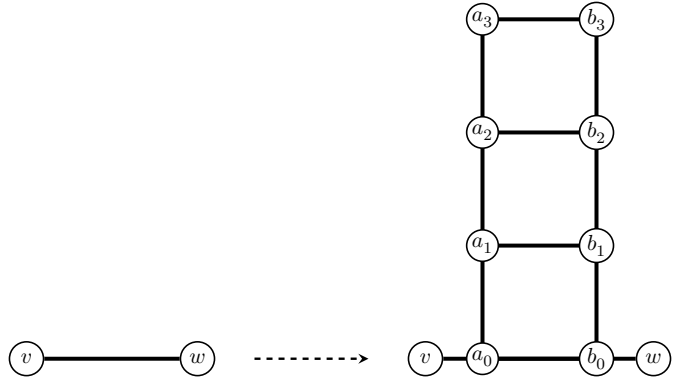

An example of a tower of height $3$ can be found in \cref{fig:example-tower}.
If the tower we consider is clear from the context we use $a_i$ and $b_i$ as above, if there are multiple towers under consideration then we may add a superscript and use $a_i^T$ and $b_i^T$, respectively, to reference the vertices of a tower $T$.

Next we also introduce a gadget to combine multiple towers.
This can be done by iterating the above construction.
In order to build $t$ towers of height $h$ on an edge $e$ we first use the above construction to build a single tower $T_1$.
Once we constructed a total of $k-1$ towers $T_1, \dots, T_{k-1}$ on the edge $e$ we proceed by building a tower gadget on the edge $\{b_0^{T_{k-1}}, w\}$.
\cref{fig:multiple-towers} shows an example of three towers of height 3 constructed on an edge.

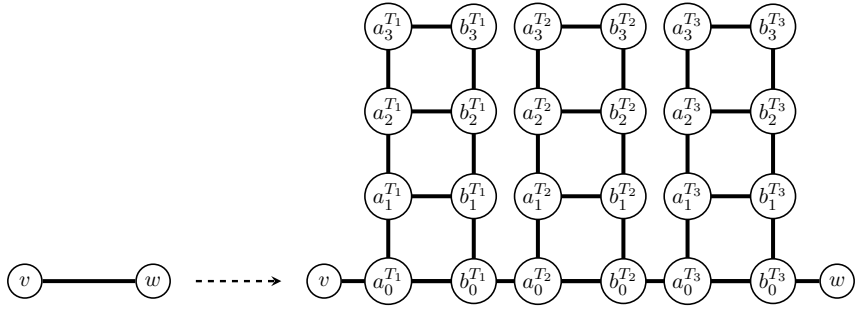
\begin{figure}[ht]
	\begin{center}
		\scalebox{0.75}{
			\begin{tikzpicture}[scale=1.5]

				\begin{scope}[every node/.style={thick,draw=black,fill=white,circle,minimum size=17, inner sep=2pt}]
					\node (v0) at (-3.5,0){};
					\node (w0) at (-2,0){};

					\node  (v) at (0,0) {};
					\node  (w) at (6,0) {};
				\end{scope}

				\begin{scope}
					\node at (v0){$v$};
					\node at (w0) {$w$};
					\node at (v){$v$};
					\node at (w) {$w$};
				\end{scope}

				\begin{scope}[line width=2pt]
					\draw (v0) -- (w0);

					\draw (v) -- (w);

				\end{scope}

				\draw[very thick, dashed, -stealth] (-1.5,0) -- (-0.5,0);

				\edgeWithTowers{0}{0}{6}{0}{3}{3}
			\end{tikzpicture}
		}
		\caption{Three towers $T_1, T_2$ and $T_3$ of height 3 on the edge $\{v,w\}.$}
		\label{fig:multiple-towers}
	\end{center}
\end{figure}

We now construct the auxiliary graph $G_H$ from $H$.
For this let $h> |V_H|$ and $t = 4h$ be numbers we choose precisely later.
As a first step we make the graph $H$ bipartite. To do so duplicate all vertices into pairs, to obtain the following set of new vertices:
\[
V = \{v_1 \colon v\in V_H \} \cup \{v_2 \colon v\in V_H\}.
\]
We will also use the notation $V_1$ for vertices from the first set and $V_2$ for vertices of the second set.
Next we duplicate and redirect all edges of $H$ and add edges between the two copies of every vertex, such that we obtain the following new set of edges:
\[
E = \{\{v_1, v_2\} \colon v \in V_H\} \cup \{\{v_1, w_2\}, \{v_2, w_1\}\colon \{v,w\}\in E_H\}.
\]
Altogether, this yields a bipartite graph $G_H'=(V,E)$ on $2n$ vertices. To go from here to the final graph $G_H$, we take a second step.
Namely, for every edge in $G_H'$ of the form $\{v_1, v_2\}$, i.e., an edge between two copies of the same vertex $v$ of $H$, we add $t$ tower gadgets of height $h$ onto it, as described above, and call the resulting graph $G_H$.
A visualization of this construction can be found in \cref{fig:complete-construction}.

\begin{figure}[ht]
	\centering
	   	\begin{tikzpicture}[scale=1]

   		\pgfdeclarelayer{background}
   		\pgfsetlayers{background,main}
        \newcommand{\displace}{0.5}
		\newcommand{\yoffset}{4}
		\newcommand{\height}{2}

		\pgfmathsetmacro{\wwx}{3}
		\pgfmathsetmacro{\wwy}{-\displace}
		\pgfmathsetmacro{\wx}{-3}
		\pgfmathsetmacro{\wy}{-\displace}

		\pgfmathsetmacro{\uux}{-3 - 0.866*\displace}
		\pgfmathsetmacro{\uuy}{0.5*\displace}
		\pgfmathsetmacro{\ux}{-0.866*\displace}
		\pgfmathsetmacro{\uy}{6*0.866 + 0.5*\displace}

		\pgfmathsetmacro{\vx}{3 + 0.866*\displace}
		\pgfmathsetmacro{\vy}{0.5*\displace}
		\pgfmathsetmacro{\vvx}{0.866*\displace}
		\pgfmathsetmacro{\vvy}{6*0.866 + 0.5*\displace}

		\begin{scope}[every node/.style={thick,draw=black,fill=white,circle,minimum size=17, inner sep=2pt}]
			\node (v0) at (-5.5,\uuy){};
			\node (u0) at (-6.5,\uuy){};
			\node (w0) at (-6, \wy){};

			\node  (v1) at (\vx,\vy) {};
			\node  (v2) at (\vvx,\vvy) {};
			\node  (w2) at (\wwx,\wwy) {};
			\node  (w1) at (\wx,\wy) {};
			\node  (u2) at (\uux,\uuy) {};
			\node  (u1) at (\ux,\uy) {};
		\end{scope}
  
		\pgfmathsetmacro{\arrowy}{(\uuy+\wy)/2}
		\draw[very thick, dashed, -stealth] (-5,\arrowy) -- (-3.9,\arrowy);

		\begin{scope}
			\node at (v0){$v$};
			\node at (w0) {$w$};
			\node at (u0) {$u$};

			\node at (v1){$v_1$};
			\node at (v2){$v_2$};
			\node at (w1){$w_1$};
			\node at (w2){$w_2$};
			\node at (u1){$u_1$};
			\node at (u2) {$u_2$};
		\end{scope}

		\begin{scope}[line width=2pt]
			\draw (v0) -- (w0);
			\draw (u0) -- (w0);
			\draw (u0) -- (v0);

			\draw (v1) -- (v2);
			\draw (w1) -- (w2);
			\draw (u1) -- (u2);

		\end{scope}

		\begin{scope}[line width=2pt]
			\draw (v1) -- (w2);
			\draw (v1) -- (u2);
			\draw (w1) -- (v2);
			\draw (w1) -- (u2);
			\draw (u1) -- (v2);
			\draw (u1) -- (w2);

		\end{scope}

		\setbool{towerlabels}{false}
        \setcounter{towerIndex}{1}

		\edgeWithTowers{\vvx}{\vvy}{\vx}{\vy}{3}{3}
		\edgeWithTowers{\wwx}{\wwy}{\wx}{\wy}{3}{3}
		\edgeWithTowers{\uux}{\uuy}{\ux}{\uy}{3}{3}

	\end{tikzpicture} 	\caption{Illustration of the construction performed in the proof of \cref{thm:perfectmatchingpolytope}.
		We start with the triangle graph on the vertices $\{u,v,w\}$ given on the left. After splitting the nodes, duplicating and redirecting the edges and finally adding the tower gadgets we arrive at the graph $G_H$ on the right.
		For the sake of presentation the height of the towers as well as their number on each edge is reduced compared to the actual construction.}
	\label{fig:complete-construction}
\end{figure}
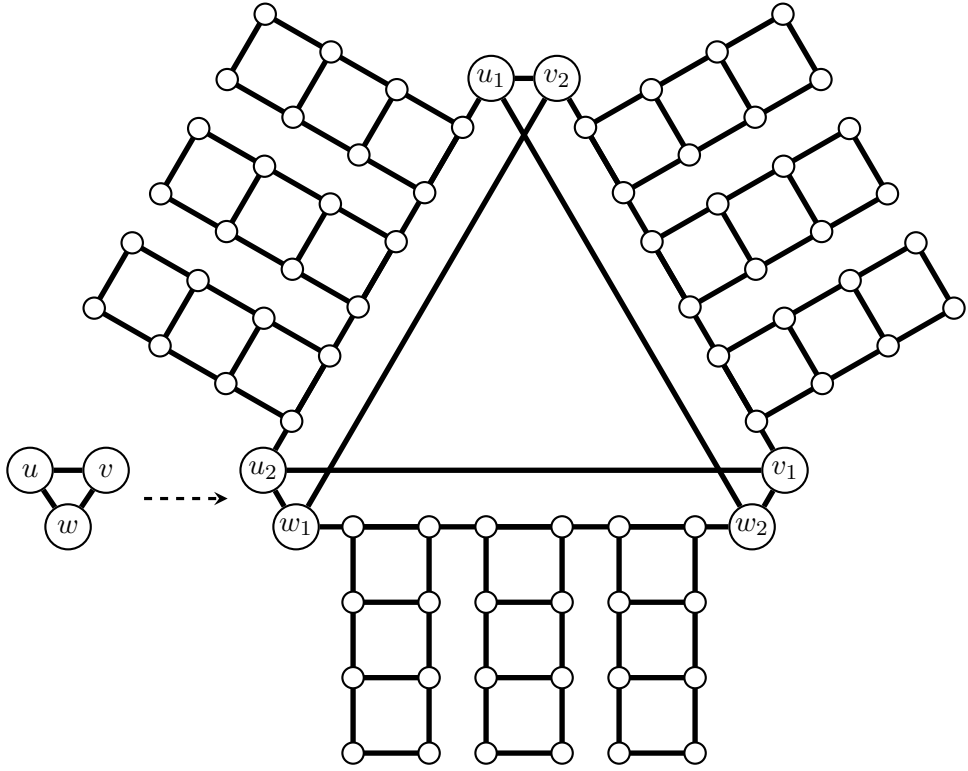

We will obtain the hardness result of \cref{thm:perfectmatchingpolytope} by showing the following bounds on the diameter of the perfect matching polytope of $G_H$.
\begin{theorem}\label{thm:Hamiltonian}
	If $H$ is Hamiltonian, then $\mathrm{diam}(P_{G_H}) \leq 2h + 4n$.
\end{theorem}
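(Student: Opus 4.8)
The plan is to exploit the adjacency characterisations quoted above. Since a point $\chi^{M'}$ is obtainable from a vertex $\chi^M$ of $P_{G_H}$ by a circuit move exactly when $\chi^{M'}$ is an adjacent vertex, and since two vertices $\chi^M,\chi^{M'}$ are adjacent iff $M\Delta M'$ is a single cycle, $\mathrm{diam}(P_{G_H})$ equals the maximum, over pairs of perfect matchings $M_1,M_2$ of $G_H$, of the minimum length of a flip sequence from $M_1$ to $M_2$. It therefore suffices to exhibit one ``hub'' perfect matching $M^{\ast}$ with $\mathrm{cdist}(\chi^{M},\chi^{M^{\ast}})\le h+2n$ for \emph{every} perfect matching $M$; the triangle inequality then gives the bound $2h+4n$. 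Fixing a Hamiltonian cycle $C=v^{(1)}v^{(2)}\cdots v^{(n)}$ of $H$, I would take $M^{\ast}$ to be the matching in which every $v\in V_H$ is ``external'', the external edges are $\{v^{(i)}_1,v^{(i+1)}_2\}$ (indices mod $n$), and inside every tower the ``all rungs'' matching $\{\{a_j^T,b_j^T\}:0\le j\le h\}$ is used.

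First I would record a structural dichotomy: in \emph{any} perfect matching $M$ of $G_H$ and for every $v\in V_H$, either both connector edges of every tower of $\Gamma_v$ lie in $M$ (call $v$ \emph{$M$-internal}) or none of them does (call $v$ \emph{$M$-external}). This follows from a short parity argument inside a single tower -- if exactly one of its two connectors is used then the ladder on the remaining vertices has odd order and cannot be completed -- which then propagates along the spine of $\Gamma_v$. When $v$ is $M$-external the towers of $\Gamma_v$ are matched internally, each matching being a tiling of a length-$(h{+}1)$ strip by ``rungs'' and ``horizontal dominoes''; when $v$ is $M$-internal the same holds on the strip $\{1,\dots,h\}$; and the $M$-external vertices carry a fixed-point-free permutation among themselves whose pairs are edges of $H$.

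The reduction $M\rightsquigarrow M^{\ast}$ then splits into two phases. \emph{Phase A} ($\le 2n$ flips): make every vertex external and turn the external permutation into the cyclic shift along $C$, by handling each internal vertex and each ``wrong'' permutation cycle with a single alternating cycle of $G_H$ built from edges of $C$ together with the currently used external edges -- since $C$ spans $H$, this merges the whole top-level structure into one $n$-cycle using a bounded number of flips per vertex of $H$, at the cost of possibly scrambling the towers. \emph{Phase B} ($\le h$ flips): with the top-level structure now that of $M^{\ast}$, clean every tower to ``all rungs''. The key point is that a \emph{single} flip corrects one level of \emph{all} towers at once: one uses the long alternating cycle that runs along the external edges of $C$ from gadget to gadget and, inside each gadget, climbs every tower along its two rails up to the current level, crosses the rung there, and descends; because $C$ visits all $n$ gadgets, this long cycle closes up. Iterating over the $\le h$ levels -- noting that such a global flip toggles ``internal/external'' for every vertex and acts on the external permutation -- one removes all dominoes from every tower while returning the top-level structure to $M^{\ast}$. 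Summing, $\mathrm{cdist}(\chi^M,\chi^{M^{\ast}})\le h+2n$, hence $\mathrm{diam}(P_{G_H})\le 2h+4n$, as claimed.

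The main obstacle is Phase B: writing down the global cleaning cycles explicitly, verifying they are alternating and close up, and bounding their \emph{number} by $h$ rather than by something growing with $n$. This is precisely where Hamiltonicity of $H$ enters -- without a single spanning cycle in $H$ one is forced to clean the towers gadget-by-gadget (or permutation-cycle by permutation-cycle), costing on the order of $nh$ flips and destroying the bound; it is also why the construction puts $t=4h$ tall towers on each duplicated vertex-edge. A secondary nuisance is the interaction between the two phases: the Phase-B flips change both the internal/external pattern and the external permutation, so one must choose the parity of the number of levels and the exact cycles so that the top-level structure lands \emph{exactly} at the cyclic shift along $C$ and not merely at some convenient matching.
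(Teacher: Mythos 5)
Your overall architecture is close to the paper's: the reduction of the diameter to flip sequences, the internal/external dichotomy on gadgets, Phase~A via at most $2n$ symmetric-difference cycles, and the use of the Hamiltonian cycle to build long alternating cycles traversing every tower simultaneously all appear (in some form) in the actual proof. The genuine gap is in Phase~B, and it is quantitative in a way that would break the reduction. Your claim that one can ``correct one level of all towers at once'' and finish in at most $h$ global flips contradicts the basic tower dynamics: as recorded in \cref{claim:path-properties}, a cycle touching a tower restricts to a path $P_k$ with $k\le d(M)$, so it can only remove the rung at the \emph{lowest} occupied level or insert a rung strictly below it; a level lying above an existing rung cannot be modified until every rung underneath has been deleted. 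Hence, if after Phase~A some tower sits in the state with rungs at levels $0,\dots,h-2$ and horizontal dominoes at levels $h-1,h$ (a legal configuration of the arbitrary starting matching $M$ which Phase~A need not disturb, since the symmetric-difference cycles through $v_1,v_2$ need not enter that gadget), then reaching your all-rungs hub by touching cycles forces $h-1$ deletions followed by $h+1$ insertions, i.e.\ $2h$ global flips rather than $h$ --- this is exactly the mechanism behind the lower bound in \cref{lemma:far-away-matchings}. With Phase~B costing $2h$, your hub-plus-triangle-inequality scheme only yields $\mathrm{diam}(P_{G_H})\le 2(2n+2h)=4h+8n$, which exceeds the non-Hamiltonian lower bound $\frac{n}{n-1}(2h-2)\approx 2h$, so the two cases are no longer separated.

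Two repairs are available. First, keep the hub but make its towers the \emph{zero-rung} (all-dominoes) state: from any state one reaches it by deleting rungs bottom-up in at most $h+1$ touching flips (padding finished towers with rung-$0$ toggles, whose parity works out since all towers share the rung-count parity $h+1 \bmod 2$), giving $\mathrm{diam}\le 4n+2h+O(1)$, which still separates the cases after a small adjustment of constants and of the parity of $h$. Second --- and this is what the paper does --- drop the hub for the tower phase and transform the two normalized matchings $N_1$ and $N_2$ \emph{directly} into one another by stripping every tower down to the zero-rung state and rebuilding to the target (\cref{lemma:distance-bound}), so that the $\le 2h$ flips are shared between the two endpoints instead of being paid twice. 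Either way, the ``level-by-level correction'' picture must be replaced by a ``strip down, then rebuild'' schedule.
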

\begin{theorem}\label{thm:non-Hamiltonian}
	If $H$ is not Hamiltonian and $|V_H|\ge 3$, then $\mathrm{diam}(P_{G_H})\geq\frac{n}{n-1}(2h-2)$.
\end{theorem}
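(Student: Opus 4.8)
To prove \cref{thm:non-Hamiltonian} I would exhibit a single pair of perfect matchings of $G_H$ whose flip distance is at least $\tfrac{n}{n-1}(2h-2)$, and lower-bound that distance by an amortized potential argument in which the \emph{only} use of non-Hamiltonicity is a bound on how many gadgets one flip can affect. First I would set up the combinatorics of perfect matchings of $G_H$. By the adjacency lemma stated above, the skeleton distance between two vertices $\chi^{M_1},\chi^{M_2}$ of $P_{G_H}$ equals the minimum length of a flip sequence from $M_1$ to $M_2$, so it suffices to work with flip sequences. A one-line parity computation on a single tower (if $v$ is matched into the tower but $w$ is not, then $2h{+}1$ tower-vertices must be matched internally, which is impossible) shows that in any perfect matching each tower has both or neither of its two attachment vertices matched internally; propagating this along the chain of $t=4h$ towers of a vertex-gadget forces that gadget to be entirely \emph{active} (all towers "open", $v_1,v_2$ consumed by the gadget) or entirely \emph{inactive} (all towers "closed", $v_1,v_2$ matched along edges inherited from $E_H$). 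In particular every vertex-gadget is attached to the rest of $G_H$ only through the two vertices $v_1,v_2$.

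The second ingredient is the structural lemma: if $C$ is an alternating cycle in $G_H$, then the set of vertex-gadgets whose \emph{interior} edges are toggled by flipping $C$ is contained in a single gadget, or else $C$ enters each such gadget in exactly one "excursion" from $v_1$ to $v_2$, these excursions being linked by edges inherited from $E_H$. The proof is immediate from the two-vertex attachment: $C$ is a simple cycle, it can leave a gadget's interior only through $v_1$ or $v_2$, hence once it enters through $v_1$ it is forced to leave through $v_2$ and cannot re-enter. Consequently, if all $n$ gadgets had their interiors toggled, the linking edges would all be single inherited edges and $C$ would project onto a Hamiltonian cycle of $H$. Therefore: \emph{if $H$ is not Hamiltonian and $|V_H|\ge 3$, every single flip toggles the interior of at most $n-1$ gadgets.} (The hypothesis $n\ge 3$ is exactly what rules out a two-gadget "digon" flip; this is where it is used.)

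Next I would attach to each vertex $v$ an integer progress value $\phi_v(M)\in\{0,1,\dots,2h-2\}$ depending only on $M$ restricted to $v$'s gadget — morally, the depth of the "reconfiguration frontier" inside the tower-ladders, which must climb one side of a height-$h$ ladder and descend the other, explaining the range $2h-2$ — with two properties. First, a single flip changes each $\phi_v$ by at most $1$: if the flipped cycle does not enter $v$'s gadget, then $M|_v$ is unchanged up to the irrelevant choice of external partner of $v_1,v_2$, so $\phi_v$ is unchanged; and if it does enter, its single excursion through the gadget is a $v_1$-to-$v_2$ path, and a case analysis of which alternating such paths exist — using that the excursion can advance the frontier by only one ladder-column at a time — gives $|\Delta\phi_v|\le 1$. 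Second, there is a pair $M_1,M_2$ of perfect matchings of $G_H$, both with every gadget active and every spine "straight", differing only in internal ladder configurations, with $\phi_v(M_1)=0$ and $\phi_v(M_2)=2h-2$ for every $v$. Granting these, the bound is immediate: for any flip sequence $M_1=N_0,\dots,N_\ell=M_2$ we get $n(2h-2)=\sum_v|\phi_v(N_0)-\phi_v(N_\ell)|\le\sum_{i=1}^\ell\sum_v|\phi_v(N_i)-\phi_v(N_{i-1})|\le \ell\cdot(n-1)$, so $\ell\ge\tfrac{n}{n-1}(2h-2)$; since $\chi^{M_1},\chi^{M_2}$ are vertices of $P_{G_H}$ this yields $\mathrm{diam}(P_{G_H})\ge\tfrac{n}{n-1}(2h-2)$.

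The technical heart — and the step I expect to be the main obstacle — is constructing $\phi_v$ so that it is simultaneously robust (an \emph{arbitrary} alternating excursion through the gadget, which may be a long path snaking through all $4h$ towers, still moves $\phi_v$ by at most $1$) and attains the full value $2h-2$ on a genuinely reachable pair $M_1,M_2$. This is precisely where the quantitative choices $h>|V_H|$ and $t=4h$ enter: they ensure that no single alternating cycle can "shortcut" the reconfiguration of a tower, since any such shortcut would be forced to exit the gadget and wind through more than $n-1$ others, which the structural lemma forbids in the non-Hamiltonian case. Verifying this one-unit-per-flip throttling (together with checking that $M_1,M_2$ as described are legitimate perfect matchings with all gadgets active, which follows from the active-configuration analysis) is where essentially all the work lies; the amortization and the reduction to perfect matching polytopes are then routine.
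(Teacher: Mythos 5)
Your outline correctly identifies the two structural ingredients the paper also uses---that a single alternating cycle can have meaningful interaction with at most $n-1$ of the vertex-gadgets unless $H$ is Hamiltonian, and that each gadget should account for about $2h-2$ units of ``work''---but the amortized-potential formalism you propose cannot be made to work, and the place where it breaks is the very place you flag as ``the main obstacle.'' The claim that each $\phi_v$ is $1$-Lipschitz per flip is false. You justify it by noting that a cycle through $v$'s gadget makes a single $v_1$--$v_2$ excursion, which (by \cref{claim:path-properties}) advances the frontier of any one tower by at most one level. But a flip is also allowed to be a cycle \emph{entirely contained inside a single tower} of $v$'s gadget. Such a cycle never visits $v_1$ or $v_2$ and never ``exits the gadget and winds through more than $n-1$ others''---your stated reason for excluding shortcuts simply does not apply to it. Concretely, with the matchings from \cref{lemma:far-away-matchings}, the $4$-cycle $(a_{h-1},a_h,b_h,b_{h-1})$ inside a tower $T$ is $M_1$-alternating and carries $T$ in one flip from the ``frontier at the bottom'' configuration to the ``frontier at the top'' configuration; any potential $\phi_v$ that depends only on $M$ restricted to the gadget and that assigns $0$ and $2h-2$ to these two extremes is teleported by a single flip. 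Since $h>n$, this single-flip jump of size roughly $2h-2$ overwhelms the budget of $n-1$ you need per flip, so the amortization inequality $\sum_v|\Delta\phi_v|\le n-1$ fails.

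The paper's proof avoids this trap precisely because it is \emph{not} a potential-function argument on the current matching, but a counting argument that is allowed to inspect the whole flip sequence $\mathcal{C}$ first. It uses $t=4h$ towers per gadget and the assumption $|\mathcal{C}|<t$ to guarantee that, for each $v$, there is a \emph{specific} tower $T_v$ containing no internal cycle of $\mathcal{C}$; \cref{lemma:far-away-matchings} then forces at least $2h-2$ cycles of $\mathcal{C}$ to \emph{touch} $T_v$. Each cycle touches $T_v$ for at most $n-1$ vertices $v$ (this is where the Hamiltonicity obstruction enters, exactly as in your structural lemma), and double counting gives $(n-1)|\mathcal{C}|\ge n(2h-2)$. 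The crucial step---choosing a ``clean'' tower $T_v$ \emph{after} seeing $\mathcal{C}$---has no analogue in a state-only potential $\phi_v(M)$, which is why your approach cannot be patched without essentially reverting to the paper's counting scheme. If you want to keep a potential-style presentation, you would have to make $\phi_v$ depend on the flip sequence (e.g., track, for the a posteriori chosen tower $T_v$, the quantity $|\horizontal(N_i)\cap\horizontal(M_1)|$ or similar), at which point it is really the paper's argument in disguise.
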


Assume for now that the two theorems hold, and let us deduce \cref{thm:perfectmatchingpolytope} from it.
To do so, we choose $h$ as $2n^2-n+1=O(n^2)$ and $t=4h=8n^2-4n+4$ in our reduction (which in particular ensures that $G_H$ has polynomial size in terms of $n$, namely its size is in $O(n^5)$). It can easily be checked that with this choice of $h$, we have $2h + 4n< \frac{n}{n-1}(2h-2)$. Indeed, we have
\begin{align*}
	\begin{array}{lrl}
		&2h + 4n &< \frac{n}{n-1}(2h-2)\\
		\Leftrightarrow & 2h(n-1) + 4n(n-1) & < 2hn - 2n\\
		\Leftrightarrow & 4n(n-1) + 2n & < 2h\\
		\Leftrightarrow & 2n^2 - n & < h \enspace .
	\end{array}
\end{align*}

It then follows from \cref{thm:Hamiltonian} and \cref{thm:non-Hamiltonian} that $H$ is Hamiltonian if and only if the diameter of $P_H$ is at most $2h+4n$.

Therefore, we obtain a polynomial reduction from the Hamiltonian cycle problem to the problem of determining the diameter of the perfect matching polytope of a bipartite graph. This then yields the \NP-hardness of the problem of computing the diameter of the perfect matching polytope of a given bipartite graph, and thus the statement of \cref{thm:perfectmatchingpolytope}.

Thus what remains to show are \cref{thm:Hamiltonian,thm:non-Hamiltonian}, which we will establish in \cref{sec:proof-diameter}.
For now we will give a very brief overview of the proofs.
The idea is to investigate the structure of cycles that can change a perfect matching of $G_H$ in multiple towers simultaneously.
Within each tower these cycles take the form of a $v$-$w$ path.
We will show that within one tower any perfect matching can be transformed into any other by consecutively flipping no more than $2h$ paths from $v$ to $w$.
If $H$ contains a Hamiltonian cycle then we can use this cycle to modify a perfect matching in $G_H$ which is "well-aligned" in all towers simultaneously, showing that any two of these "well-aligned" matchings are at distance at most $2h$.
To be more precise, these "well-aligned" perfect matchings are the perfect matchings of $G$ that contain no edge of the form $\{v_1,w_2\}$ for $v,w\in V$, or equivalently the perfect matchings that constitute a perfect matching when restricted to any tower.
To obtain the bound in \cref{thm:Hamiltonian} we will show that any perfect matching in $G_H$ can be transformed into a "well-aligned" one by flipping along at most $2n$ alternating cycles.

For the lower bound of \cref{thm:non-Hamiltonian} we will show that within one tower one can construct two perfect matchings that cannot be transformed into each other using less than $2h-2$ paths between $v$ and $w$.
This construction can be extended to the whole of $G_H$, giving two perfect matchings $M_1$, $M_2$ with the following property: To transform one into the other we need to modify each tower at least $2h-2$ times by a cycle that modifies multiple towers or at least once by a cycle only modifying this tower.
To handle the second case, recall that we add $t = 4h \geq \frac{n}{n-1}(2h-2)$ towers on each edge.
So, if we transform $M_1$ into $M_2$ using less than $t$ cycles, then for each edge there is at least one tower that is modified $2h-2$ times.
Next, we will show that a cycle $C$ in $G_H$ that modifies towers on every edge gives rise to a Hamiltonian cycle in $H$.
Roughly speaking, any such cycle has to contain a path along the subdivision of the $\{v_1, v_2\}$ edge for every $v\in V$, so reverting the constructions gives the desired Hamiltonian cycle.
Combining these facts then yields that for a non-Hamiltonian $H$ the distance between $M_1$ and $M_2$ is at least $\frac{n}{n-1}(2h-2)$, as we have to modify towers on all $n$ edges of the form $\{v_1,v_2\}$ at least $2h-2$ times, while every cycle can only modify towers on at most $n-1$ of those edges, establishing \cref{thm:non-Hamiltonian}.

\subsection{Reduction for \textsc{Monotone Diameter}}\label{sec:reduction-monotone-diameter}

In the following we will present the main ideas of the reduction used to prove \cref{thm:monotoneperfectmatchingpolytope}.
The reduction is motivated by the following precise description of the monotone diameter of the perfect matching polytope of a bipartite graph in graph theoretical terms.

\begin{lemma}\label{lemma:monotone-to-symmetric-difference}
	Let $G$ be a bipartite graph with perfect matching polytope $P_G$.
	Then the monotone diameter of $P_G$ agrees with the maximum number of cycles in the symmetric difference of two perfect matchings in $G$.
\end{lemma}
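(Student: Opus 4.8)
Write $D(G)$ for the maximum, over all pairs of perfect matchings $M_1,M_2$ of $G$, of the number of cycles in the symmetric difference $M_1\Delta M_2$; the plan is to prove $\mathrm{mdiam}(P_G)=D(G)$ by two matching inequalities. (If $G$ has no perfect matching both quantities are $0$, so assume $G$ has one.) Throughout I use the Chvátal--Iwata adjacency characterization recalled above: two vertices $\chi^M,\chi^{M'}$ of $P_G$ are adjacent precisely when $M\Delta M'$ is a single cycle. Consequently a path on the skeleton of $P_G$ is exactly a flip sequence $M=N_0,N_1,\dots,N_\ell$ of perfect matchings; for $\mathbf c\in\mathbb R^E$ it is $\mathbf c$-monotone iff the weights $c(N_i):=\mathbf c^\top\chi^{N_i}=\sum_{e\in N_i}c_e$ are non-increasing; and $\chi^{M^*}$ is a $\mathbf c$-optimal vertex iff $M^*$ is a minimum-weight perfect matching.

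\emph{Upper bound $\mathrm{mdiam}(P_G)\le D(G)$.} Fix an arbitrary objective $\mathbf c\in\mathbb R^E$ and an arbitrary vertex $\chi^M$, and let $M^*$ be a minimum-weight perfect matching. Decompose $M\Delta M^*=C_1\sqcup\dots\sqcup C_k$ into its cycle components; these are pairwise vertex-disjoint, each is alternating with respect to both $M$ and $M^*$, and $k\le D(G)$. The crucial point is that flipping a single cycle $C_j$ never increases the weight: $M^*\Delta C_j$ is again a perfect matching, so optimality of $M^*$ gives $c(M^*\Delta C_j)\ge c(M^*)$, i.e.\ $c(C_j\cap M)\ge c(C_j\cap M^*)$, whence $c(M\Delta C_j)=c(M)-c(C_j\cap M)+c(C_j\cap M^*)\le c(M)$. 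Since the $C_j$ are vertex-disjoint, I flip them one at a time, say in the order $C_1,\dots,C_k$, obtaining perfect matchings $M=M^{(0)},M^{(1)},\dots,M^{(k)}=M\Delta M^*=M^*$; consecutive ones differ in exactly one cycle (hence are adjacent on $P_G$), and the weight change of the $i$-th step involves only edges of $C_i$, on whose vertices $M^{(i-1)}$ still matches as $M$ does, so the same computation gives $c(M^{(i)})\le c(M^{(i-1)})$. This is a $\mathbf c$-monotone path of length $k\le D(G)$ from $\chi^M$ to the $\mathbf c$-optimal vertex $\chi^{M^*}$, and as $\mathbf c$ and $M$ were arbitrary we get $\mathrm{mdiam}(P_G)\le D(G)$.

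\emph{Lower bound $\mathrm{mdiam}(P_G)\ge D(G)$.} Fix perfect matchings $M_1,M_2$ with $M_1\Delta M_2=C_1\sqcup\dots\sqcup C_D$, $D=D(G)\ge 1$, and take the objective $\mathbf c\in\mathbb R^E$ given by $c_e=0$ for $e\in M_2$, $c_e=1$ for $e\in M_1\setminus M_2$, and $c_e=|V(G)|$ for every remaining edge. The perfect matchings contained in the graph $M_1\cup M_2$ are exactly the matchings
\[
  M_S:=(M_1\cap M_2)\cup\bigcup_{j\in S}(C_j\cap M_2)\cup\bigcup_{j\notin S}(C_j\cap M_1),\qquad S\subseteq\{1,\dots,D\},
\]
since on each cycle $C_j$ one must choose one of its two alternating edge sets; here $M_\emptyset=M_1$, $M_{\{1,\dots,D\}}=M_2$, and $c(M_S)=\sum_{j\notin S}|C_j|/2$ depends strictly decreasingly on $S$. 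Any perfect matching using an edge outside $M_1\cup M_2$ has weight at least $|V(G)|>|V(G)|/2\ge c(M_1)$, so $M_2=M_{\{1,\dots,D\}}$ is the unique $\mathbf c$-optimal vertex, and since the weights along a $\mathbf c$-monotone path started at $\chi^{M_1}$ never exceed $c(M_1)$, such a path stays inside $\{\chi^{M_S}:S\subseteq\{1,\dots,D\}\}$. Finally $M_S\Delta M_{S'}=\bigcup_{j\in S\Delta S'}C_j$ is a single cycle only when $|S\Delta S'|=1$, and then the weight strictly decreases iff that one index is added to $S$; hence every $\mathbf c$-monotone walk from $\chi^{M_1}$ enlarges $S$ by exactly one index per step and reaches $\chi^{M_2}$ only after $D$ steps. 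Therefore $\mathrm{mdiam}(P_G,\mathbf c)\ge D$, and so $\mathrm{mdiam}(P_G)\ge D(G)$; combined with the upper bound this gives $\mathrm{mdiam}(P_G)=D(G)$.

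I expect the lower bound to be the delicate part: one must verify that a $\mathbf c$-monotone path genuinely cannot escape the sublattice $\{M_S\}$ (this is exactly what the large weight $|V(G)|$ on the remaining edges buys) and that, within this sublattice, single-cycle flips correspond precisely to single-index changes of $S$ with a predictable sign of the weight change, which is what pins the monotone distance from $\chi^{M_1}$ to the optimum down to \emph{exactly} $D$. The upper bound is routine once one isolates the exchange inequality $c(M\Delta C_j)\le c(M)$ and observes that the vertex-disjointness of the $C_j$ lets them be flipped independently and in any order.
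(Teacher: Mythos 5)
Your proof is correct and follows essentially the same approach as the paper: the upper bound flips the vertex-disjoint cycles of $M\Delta M^*$ one at a time, and the lower bound uses the identical cost function ($0$ on $M_2$, $1$ on $M_1\setminus M_2$, and $|V|$ elsewhere) to trap monotone walks inside $M_1\cup M_2$. The only difference is that you spell out the details more carefully than the paper — the exchange inequality $c(C_j\cap M)\ge c(C_j\cap M^*)$ justifying monotonicity in the upper bound, and the explicit parameterization of perfect matchings in $M_1\cup M_2$ by subsets $S\subseteq\{1,\dots,D\}$ to pin down the walk length exactly in the lower bound — which is good rigor but not a different route.
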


We postpone the proof of \cref{lemma:monotone-to-symmetric-difference} to \cref{sec:proof-monotone-diameter} and remark that in the special case of complete (bipartite) graphs, this description was already observed by \textcite{Rispoli1992}.

\cref{lemma:monotone-to-symmetric-difference} shows that determining the monotone diameter of $G$ corresponds one to one to a specific cycle packing problem.
A set of pairwise disjoint cycles in a bipartite graph is the symmetric difference of two perfect matchings if and only if the complement of the cycles contains a perfect matching.
So given a bipartite graph $G$, we are interested in packing the maximum number of vertex-disjoint cycles such that the subgraph of $G$ induced by the vertices that are not covered by any of the cycles still contains a perfect matching.
When omitting the latter condition, strong \NP-hardness of the problem was established in \cite{kloks2011results} by a reduction from 3-dimensional matching, however their reduction cannot be used or directly modified to match our variant of the cycle-packing problem with the additional matching condition.

Instead, the idea of our reduction is to observe that the additional matching constraint is automatically guaranteed to be fulfilled if the cycles cover all the vertices.
In particular, in a graph with $|V| = 4k$ we can pack $k$ vertex disjoint cycles such that the uncovered vertices admit a perfect matching if and only if the graph admits $k$ vertex disjoint cycles of length $4$.
Finally, observe that such a collection $\mathcal{C}$ of cycles exists, if and only if there exist two perfect matchings $M_1$ and $M_2$, such that their symmetric difference consists of $k$ cycles.
Indeed, given $\mathcal{C}$ we can define $M_1$ by picking two non-adjacent edges from every cycle and $M_2$ by picking the remaining two edges.
The other way around, given $M_1$ and $M_2$, we get $\mathcal{C}$ as the set of cycles in $M_1\Delta M_2$.

Thus \cref{lemma:monotone-to-symmetric-difference} gives rise to the following consequence.

\begin{corollary}\label{cor:trivial}
	Let $G=(V,E)$ be a bipartite graph. Then we have $\mathrm{mdiam}(P_G)=\frac{|V|}{4}$ if and only if there exists a collection of $4$-cycles in $G$ that covers every vertex in $V$ precisely once.
\end{corollary}

This in particular implies that if one can compute the monotone diameter of perfect matching polytopes, then one can solve the following covering problem.

\begin{mdframed}[innerleftmargin=0.5em, innertopmargin=0.5em, innerrightmargin=0.5em, innerbottommargin=0.5em, userdefinedwidth=0.95\linewidth, align=center]
	{\textsc{Vertex-Disjoint $4$-Cycle Cover}}
	\sloppy

	\noindent
	\textbf{Input:} A bipartite graph $G = (V,E)$.

	\noindent
	\textbf{Decision:} Is there a collection $\mathcal{C}$ of cycles of length $4$ such that every $v\in V$ is part of exactly one cycle of $\mathcal{C}$?
\end{mdframed}

\noindent
In order to prove \cref{thm:monotoneperfectmatchingpolytope} it thus suffices to prove the following.
\begin{lemma}\label{lem:4dm-reduction}
	The \textsc{Vertex-Disjoint $4$-Cycle Cover} problem is \NP-complete.
\end{lemma}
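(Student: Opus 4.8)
The plan is to show membership in $\NP$ and then $\NP$-hardness by a reduction from \textsc{4-Dimensional Matching}, which is well known to be $\NP$-complete~\cite{GareyJohnson1990}. Membership is immediate: a vertex-disjoint $4$-cycle cover $\mathcal{C}$ is a certificate of size $O(|V|)$, and one checks in polynomial time that each member of $\mathcal{C}$ is a $4$-cycle of $G$, that the members are pairwise vertex-disjoint, and that $\bigcup\mathcal{C}=V$. For the reduction, given an instance $(W,X,Y,Z,E)$ of \textsc{4-Dimensional Matching}, I would build a bipartite graph $G$ with one shared \emph{element vertex} for each element of $W\cup X\cup Y\cup Z$ (those of $W\cup Y$ on one side, those of $X\cup Z$ on the other), plus auxiliary vertices, so that a tuple $e=(w,x,y,z)\in E$ corresponds, roughly, to the $4$-cycle $w$--$x$--$y$--$z$--$w$. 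The whole difficulty is concentrated in the design of a \emph{tuple gadget} $G_e$, attached to the four element vertices of $e$, which in any $4$-cycle cover of $G$ must exhibit exactly one of two behaviours: an \emph{active} state, in which each of $w,x,y,z$ is covered by a $4$-cycle lying inside $G_e$, and an \emph{inactive} state, in which all internal vertices of $G_e$ are covered by $4$-cycles lying inside $G_e$ while $w,x,y,z$ are not touched by $G_e$ at all.

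My candidate for $G_e$ has, for each of its four elements, a private ``arm'' --- a $4$-cycle through that element and three new vertices (e.g.\ the arm through $w$ is $w$--$c$--$a$--$c'$--$w$) --- together with a ``core'' of additional edges among the $12$ arm vertices that forms three further $4$-cycles, one of which is a \emph{coupling} $4$-cycle using exactly one vertex of each arm. The edges are to be chosen so that (i) the only $4$-cycle of $G$ through an element vertex is the corresponding arm, so that in particular no $4$-cycle uses vertices of two different gadgets and no $4$-cycle combines two arms of the same gadget, and hence a cover of $G$ decomposes gadget by gadget; and (ii) for each arm, its two vertices adjacent to the element vertex lie in two \emph{distinct} core $4$-cycles, which is precisely what enforces~(i). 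Granting~(i), within $G_e$ the arm vertices of the ``disengaged'' arms must be covered by core $4$-cycles entirely inside $G_e$; since each arm contributes $3$ vertices while core $4$-cycles cover multiples of $4$, the number of disengaged arms is divisible by $4$, hence equals $0$ or $4$. This is exactly where \emph{four} ports, and thus $4$-dimensional rather than $3$-dimensional matching, become essential; the coupling $4$-cycle makes the dichotomy rigid, since disengaging one arm forces the coupling cycle into the cover, which in turn disengages all four arms.

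Granting such a gadget, the correctness of the reduction is routine. In any $4$-cycle cover of $G$ every gadget is active or inactive, and since the element vertices are shared and each is covered exactly once, every element of $W\cup X\cup Y\cup Z$ lies in exactly one active tuple; thus the set of active tuples is a subset $M\subseteq E$ meeting each element exactly once, i.e.\ a yes-certificate for \textsc{4-Dimensional Matching}. Conversely, such an $M$ yields a $4$-cycle cover of $G$ by activating the tuples in $M$ and leaving all others inactive. As $|V(G)|$ is polynomial in the input size, this gives a polynomial-time reduction, which together with membership proves $\NP$-completeness. I expect the main obstacle to be carrying out the gadget construction in full detail: verifying that the chosen core edges create no stray $4$-cycle through an element vertex and that the all-or-nothing behaviour of the four arms is genuinely forced, all while keeping $G$ bipartite.
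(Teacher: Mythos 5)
Your approach matches the paper's. You reduce from $4$-Dimensional Matching via a per-tuple gadget consisting of four ``arm'' $4$-cycles, each through one element vertex and three private auxiliary vertices, together with three ``core'' $4$-cycles on the twelve auxiliary vertices; the congruence $3k\equiv 0\pmod 4$ then forces an all-or-nothing behaviour per gadget, which is exactly why $4$-dimensional rather than $3$-dimensional matching is the right source. The paper's explicit gadget is precisely this: for a tuple $e=(w,x,y,z)$ it introduces vertices $a_1^e,a_2^e,a_3^e$ for each $a\in\{w,x,y,z\}$, with arm cycles $a$--$a_1^e$--$a_2^e$--$a_3^e$--$a$ and core cycles $w_i^e$--$x_i^e$--$y_i^e$--$z_i^e$--$w_i^e$ for $i\in\{1,2,3\}$. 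One small adjustment to your sketch: \emph{all three} core cycles are ``coupling'' cycles using exactly one vertex per arm, not just one of them; within each arm the bipartition places $a_1^e,a_3^e$ on one side and $a_2^e$ on the other, so no bipartite core $4$-cycle can use two vertices of the same arm, and this is also what makes your crucial property~(i) --- that the only $4$-cycle through an element vertex is its arm --- easy to verify. Your secondary ``rigidity via the coupling cycle'' intuition is not literally forced (the gadget contains additional $4$-cycles such as $w_1^e$--$x_1^e$--$x_2^e$--$w_2^e$), but this is harmless: the $\bmod\,4$ counting argument you also give, and which the paper uses, does not rely on any such forcing.
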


We will prove \cref{lem:4dm-reduction} by giving a reduction from 4-dimensional matching.
The latter is the following decision problem:
\begin{mdframed}[innerleftmargin=0.5em, innertopmargin=0.5em, innerrightmargin=0.5em, innerbottommargin=0.5em, userdefinedwidth=0.95\linewidth, align=center]
	{\textsc{4-Dimensional Matching}}
	\sloppy

	\noindent
	\textbf{Input:} Four disjoint sets $W,X,Y,Z$ and a subset $E \subseteq W\times X \times Y \times Z$.

	\noindent
	\textbf{Decision:} Is there a subset $M\subseteq E$ such that every element of $W, X, Y$ and $Z$ is part of exactly one element of $M$?
\end{mdframed}

\noindent
\textcite{karp2010reducibility} showed that \textsc{3-Dimensional Matching} is \NP-hard. It is straightforward to reduce \textsc{3-Dimensional Matching} to \textsc{4-Dimensional Matching}, and thus also \textsc{4-Dimensional Matching} is \NP-hard.

Let $W,X,Y,Z$ and $E$ be an instance of \textsc{4-Dimensional Matching}.
We will construct an instance of \textsc{Vertex-Disjoint $4$-Cycle Cover}, such that the answers to both problems agree. To do so we construct a bipartite graph $G$ in the following way:
For every element $a\in W\cup X\cup Y\cup Z$ we add a vertex to $G$.
Furthermore, for every element $e \in E$ we add twelve vertices and 28 edges forming a gadget as shown in \cref{fig:4dm-reduction-gadget}.
\begin{figure}
	\centering
	\begin{tikzpicture}[scale=1.2]
        \newcommand{\offset}{0.5}
        \begin{scope}[every node/.style={thick,draw=black,fill=white,circle,minimum size=15, inner sep=1pt}]
                \node[fill=black!20!white] (w) at (-\offset ,-\offset) {$w$};
                \node (x) at (-\offset ,4+\offset) {$x$};
                \node[fill=black!20!white] (y) at (4+\offset ,4+\offset) {$y$};
                \node (z) at (4+\offset ,-\offset) {$z$};
        \end{scope}
        
        \begin{scope}[every node/.style={thick,draw=black,fill=white,circle,minimum size=5, inner sep=1pt}]
                \node (w1) at (1 ,0) {};
                \node[fill=black!20!white] (w2) at (1 ,1) {};
                \node (w3) at (0 ,1) {};
                
                \node[fill=black!20!white] (x1) at (0 ,3) {};
                \node (x2) at (1 ,3) {};
                \node[fill=black!20!white] (x3) at (1 ,4) {};
                
                \node (y1) at (3 ,4) {};
                \node[fill=black!20!white] (y2) at (3 ,3) {};
                \node (y3) at (4 ,3) {};
                
                \node[fill=black!20!white] (z1) at (4 ,1) {};
                \node (z2) at (3 ,1) {};
                \node[fill=black!20!white] (z3) at (3 ,0) {};
        \end{scope}
        
        \begin{scope}[every node/.style={thick,draw=black,fill=white,circle,minimum size=5, inner sep=1pt, font=\scriptsize}]
                \node (w1) at (1 ,0) {$w^e_1$};
                \node[fill=black!20!white] (w2) at (1 ,1) {$w^e_2$};
                \node (w3) at (0 ,1) {$w^e_3$};
                
                \node[fill=black!20!white] (x1) at (0 ,3) {$x^e_1$};
                \node (x2) at (1 ,3) {$x^e_2$};
                \node[fill=black!20!white] (x3) at (1 ,4) {$x^e_3$};
                
                \node (y1) at (3 ,4) {$y^e_1$};
                \node[fill=black!20!white] (y2) at (3 ,3) {$y^e_2$};
                \node (y3) at (4 ,3) {$y^e_3$};
                
                \node[fill=black!20!white] (z1) at (4 ,1) {$z^e_1$};
                \node (z2) at (3 ,1) {$z^e_2$};
                \node[fill=black!20!white] (z3) at (3 ,0) {$z^e_3$};
        \end{scope}

        \begin{scope}[line width=3pt]
            \draw (w) -- (w1);
            \draw (w1) -- (w2);
            \draw (w2) -- (w3);
            \draw (w3) -- (w);

            \draw (x) -- (x1);
            \draw (x1) -- (x2);
            \draw (x2) -- (x3);
            \draw (x3) -- (x);

            \draw (y) -- (y1);
            \draw (y1) -- (y2);
            \draw (y2) -- (y3);
            \draw (y3) -- (y);

            \draw (z) -- (z1);
            \draw (z1) -- (z2);
            \draw (z2) -- (z3);
            \draw (z3) -- (z);
        \end{scope}

        \begin{scope}[line width=3pt, orange!80!red, dashed, dash pattern=on 5pt off 2pt]
            \draw (w1) -- (x1) -- (y1) -- (z1) -- (w1);
            \draw (w2) -- (x2) -- (y2) -- (z2) -- (w2);
            \draw (w3) -- (x3) -- (y3) -- (z3) -- (w3);
        \end{scope}

    \end{tikzpicture} 	\caption{Visualization of the gadget for a hyperedge $e=(x,y,z,w)$, used in the proof of \cref{lem:4dm-reduction}.
		The nodes marked in gray form one side of a bipartition.}
	\label{fig:4dm-reduction-gadget}
\end{figure}
To finish the reduction we will show the following.
\begin{lemma}\label{lem:4d-to-cycle-cover}
	The instance $(W,X,Y,Z,E)$ admits a 4-dimensional matching if and only if the graph $G$ admits a vertex-disjoint cover by cycles of length 4.
\end{lemma}

The idea of the proof is to observe that when covering all vertices of $G$ with vertex disjoint $4$-cycles we either cover all 16, or only the inner 12 vertices of every gadget.
This can then be interpreted as whether the hyperedge corresponding to this gadget is part of the 4-dimensional matching or not.

We postpone a precise description of the reduction, as well as the details of the proof of \cref{lem:4d-to-cycle-cover}, to \cref{sec:proof-monotone-diameter}.

\section{Proof of \cref{thm:perfectmatchingpolytope}}\label{sec:proof-diameter}

In this section we will prove correctness of the reduction presented in \cref{sec:reduction-diameter}.
By the discussion in \cref{sec:reduction-diameter} this boils down to proving \cref{thm:Hamiltonian} and \cref{thm:non-Hamiltonian}.
Before jumping into the proofs we first collect a few definitions and observations.
As we will work with iterative flippings of cycles from now on, it will be useful to introduce the following definition.
\begin{definition}
	Let $G=(V,E)$ be a graph and let $M_1$, $M_2$ be two perfect matchings in $G$.
	We call a collection of cycles $\mathcal{C} = (C_1, \dots, C_\ell)$ in $G$ a \emph{flip sequence of length $\ell$ from $M_1$ to $M_2$} if
	\begin{enumerate}
		\item For every $i\in \{1,\dots, \ell\}$ the cycle $C_i$ is $M_1\Delta C_1\Delta C_2\Delta \dots \Delta C_{i-1}$-alternating, and
		\item $M_2 = M_1\Delta C_1\Delta C_2\Delta \dots \Delta C_{\ell}$.
	\end{enumerate}
\end{definition}

We need two more definitions, that will enable us to closely analyze matchings within one tower.
\begin{definition}
	Let $G=(V,E)$ be a graph and let $e=\{v,w\}\in E$ be an edge of $G$.
	Consider the graph $G_T$ obtained from $G$ by constructing a tower $T$ on $e$.
	We say that a cycle $C$ in $G_T$ \emph{touches} the tower $T$, if $C$ contains the edges $\{v,a_0\}$ and $\{b_0,w\}$.
\end{definition}

In particular a cycle that touches the tower $T$ can be separated into a $v$--$w$ path through the vertices of $T$ and a $w$--$v$ path in $G\setminus e$.

\begin{definition}
	Let $G=(V,E)$ be a graph and let $e=\{v,w\}\in E$ be an edge of $G$.
	Consider the graph $G_T$ obtained from $G$ by constructing a tower $T$ on $e$.
	Let $M$ be a matching in $G_T$.
	Define $\horizontal(M) = \{i\in \{0, \dots, h\}\colon \{a_i, b_i\}\in M\}$, which we call the \emph{horizontal indices} of $M$.
	Furthermore set $d(M) = \min(\horizontal(M))$ which we call the \emph{depth} of $M$.
\end{definition}

As a final bit of preparation we need the following statement on the structural changes of $M$ when augmenting along a $v$-$w$ path:
\begin{lemma}\label{claim:path-properties}
	Let $G=(V,E)$ be a graph and let $e=\{v,w\}\in E$ be an edge of $G$.
	Consider the graph $G_T$ obtained from $G$ by constructing a tower $T$ of height $h$ on $e$.
	Let $M$ be a perfect matching in $G_T$ and let $C$ be an $M$-alternating cycle in $G_T$ that touches $T$.
	Finally set $M' = M\Delta C$ to be the matching we obtain from $M$ by flipping along $C$.
	Then
	\begin{enumerate}
		\item $|\horizontal(M) \Delta \horizontal(M')| = 1$, and
		\item $\horizontal(M)\setminus \horizontal(M') \subseteq \{d(M)\}$.
	\end{enumerate}
	So, in words, flipping the edges of $C$ the number of horizontal edges changes by exactly one and if we remove a horizontal edge of $M$, then it must be the edge $\{a_{d(M)},b_{d(M)}\}$.
\end{lemma}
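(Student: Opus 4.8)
The plan is to push everything into the internal combinatorics of a single tower. Write $L$ for the "ladder" subgraph induced by the tower vertices $a_0,\dots,a_h,b_0,\dots,b_h$, with its two side‑paths $a_0,a_1,\dots,a_h$ and $b_0,b_1,\dots,b_h$ and its rungs $\{a_i,b_i\}$. The only edges of $G_T$ joining the tower to the rest of the graph are $\{v,a_0\}$ and $\{w,b_0\}$, and since $C$ is a simple cycle that uses both of these (because it touches $T$), the intersection $C\cap L$ is a single simple path $Q$ from $a_0$ to $b_0$, all of whose edges are tower edges; concretely $C$ is obtained from $Q$ by appending the two edges $\{v,a_0\},\{w,b_0\}$ and a $v$–$w$ path disjoint from the tower. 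First I would record the identity
$$\horizontal(M)\,\Delta\,\horizontal(M')=\{\,i:\{a_i,b_i\}\in M\Delta M'\,\}=\{\,i:\{a_i,b_i\}\in C\,\},$$
which holds because $M\Delta M'=C$ and because $\horizontal$ records exactly which rungs lie in the matching. Thus both parts reduce to a statement about which rungs the path $Q$ traverses.

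For part~(i) I would prove the purely graph‑theoretic fact that \emph{every} $a_0$–$b_0$ path in the ladder $L$ uses exactly one rung, by induction on the height $h$. For $h=0$ the ladder is a single rung, so there is nothing to show. For $h\ge 1$, the degree‑two vertices $a_h,b_h$ force that an $a_0$–$b_0$ path $Q$ either (a) avoids both $a_h$ and $b_h$, in which case $Q$ lives in the height‑$(h-1)$ ladder and induction applies, or (b) contains the sub‑path $a_{h-1}\,a_h\,b_h\,b_{h-1}$ (hence the rung $\{a_h,b_h\}$) but not the rung $\{a_{h-1},b_{h-1}\}$, since using both would create a $4$‑cycle; in case~(b), replacing that sub‑path by the single edge $\{a_{h-1},b_{h-1}\}$ yields an $a_0$–$b_0$ path in the height‑$(h-1)$ ladder with the same number of rungs, and induction finishes. (Alternatively one can argue via the cycle space of the planar ladder: any simple cycle is the $\mathbb{Z}_2$‑sum of an interval of its square faces, hence uses exactly two rungs, and once such a cycle passes through $a_0$ and $b_0$ one of those rungs must be $\{a_0,b_0\}$.) Combined with the displayed identity, this gives $|\horizontal(M)\Delta\horizontal(M')|=1$.

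For part~(ii), let $i^\ast$ be the unique rung level traversed by $Q$. If $\{a_{i^\ast},b_{i^\ast}\}\notin M$ then $i^\ast\notin\horizontal(M)$, so $\horizontal(M)\setminus\horizontal(M')=\emptyset$ and we are done; so assume $\{a_{i^\ast},b_{i^\ast}\}\in M$ and aim to show $i^\ast=d(M)$. By the description of ladder paths, $Q$ must be the symmetric path $a_0,a_1,\dots,a_{i^\ast},b_{i^\ast},\dots,b_1,b_0$, and $Q$ is $M$‑alternating with its \emph{middle} edge $\{a_{i^\ast},b_{i^\ast}\}$ in $M$, so the alternating pattern of $Q$ is completely determined; this pins down the $M$‑edges among $a_0,\dots,a_{i^\ast},b_0,\dots,b_{i^\ast}$ except for the status of $a_0$ and $b_0$ themselves. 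The alternation of $C$ at the boundary vertices $a_0,b_0$ — using the edges $\{v,a_0\}$ and $\{w,b_0\}$, which belong to $C$ — resolves that last ambiguity. A short parity case distinction then shows that the forced edges match up $a_0,\dots,a_{i^\ast},b_0,\dots,b_{i^\ast}$ using the rung $\{a_{i^\ast},b_{i^\ast}\}$ and only "vertical" edges strictly below level $i^\ast$ (when $i^\ast$ is even one gets $a_0\!\leftrightarrow\!a_1$, $b_0\!\leftrightarrow\!b_1$; when $i^\ast$ is odd one gets $a_0\!\leftrightarrow\!v$, $b_0\!\leftrightarrow\!w$), so no rung of $M$ lies below level $i^\ast$. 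Hence $d(M)=i^\ast$ and $\horizontal(M)\setminus\horizontal(M')=\{i^\ast\}=\{d(M)\}$.

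I expect the main obstacle to be the bookkeeping in part~(ii): one must use the alternation of the \emph{whole} cycle $C$ (not of $Q$ alone) at the boundary vertices $a_0,b_0$, since otherwise a spurious rung $\{a_0,b_0\}\in M$ cannot be excluded in the odd case. The ladder‑path statement in part~(i) is the conceptually cleanest ingredient, and it is worth isolating it as a standalone claim precisely so that the "exactly one rung" conclusion (rather than merely "an odd number of rungs", which follows from bipartiteness alone) is made explicit.
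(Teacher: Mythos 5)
Your proposal is correct and takes essentially the same route as the paper: both identify the restriction of $C$ to the tower as a single $v$--$w$ path using exactly one rung (you prove this by induction on the ladder, the paper reads it off the tower's structure), and both deduce part~(ii) from the fact that an $M$-alternating cycle must contain the matching edge of every vertex it visits. The only difference is cosmetic: the paper applies that fact once, at $a_{d(M)}$, in a two-line argument, whereas your parity case analysis reconstructs the entire forced matching below level $i^{\ast}$ --- same idea, just more bookkeeping.
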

\begin{proof}
	To prove the first part, let $k$ be the maximum number for which $C$ contains the edge $\{a_{k-1}, a_{k}\}$.
	Then $C$ must contain the edge $\{a_k, b_k\}$ as well.
	As $C$ touches $T$ its restriction to $T$ is a $v$--$w$ path.
	It follows from the structure of $T$ that this restriction consists of the edges $\{v,a_0\}$, $\{b_0,w\}$, $\{a_k,b_k\}$ together with the edges $\{a_{i-1}, a_i\}$ and $\{b_i,b_{i-1}\}$ for $i\in\{1, \dots, k\}$.
	Among these edges, $\{a_k, b_k\}$ is the only horizontal edge.
	Thus it is the unique horizontal edge of $M \Delta M'$.
	In particular, we have $\horizontal(M) \Delta \horizontal(M') = \{k\}$, proving the first statement.

	To prove the second statement, and using the notation from above, it suffices to show that $k\leq d(M)$.
	Assume $k\geq d(M)$, so $C$ contains the vertex $a_{d(M)}$.
	As $C$ is $M$-alternating it must contain the matching edge incident to $a_{d(M)}$, which is the edge $\{a_{d(M)}, b_{d(M)}\}$.
	So $k\geq d(M)$ implies $k = d(M)$ and $k\leq d(M)$ holds as claimed.
\end{proof}

\subsection{Proof of \cref{thm:Hamiltonian}}

In this section we want to show, that if $H$ is Hamiltonian, then the diameter of $P_{G_H}$ is at most $2h+4n$.
By the discussion from \cref{sec:reduction-diameter} this is equivalent to showing that given any two perfect matchings in $G_H$ we can transform one to the other by flipping along at most $2h+4n$ many cycles.
In order to do so, we need to consider how we can extend flip sequences over a tower.
This is summarized in the following technical lemma.

\begin{lemma}\label{lemma:distance-bound}
	Let $G=(V,E)$ be a graph, $e=\{v,w\}\in E$ be an edge of $G$ and $\tilde{M}_1$ and $\tilde{M}_2$ be two perfect matchings in $G$ that contain $e$.
	Let $h\in \mathbb{Z}_{\geq 1}$.
	Furthermore assume we are given a flip sequence $\mathcal{C} = (C_1, \dots, C_{2h})$ of length $2h$ from $\tilde{M}_1$ to $\tilde{M}_2$, such that $e\in C_i$ for all $i\in\{1,\dots, 2h\}$.
	Consider the graph $G_T$ obtained by constructing a tower $T$ of height $h$ on $e$.
	Let $M_1$ and $M_2$ be two perfect matchings in $G_T$ such that for $i\in\{1,2\}$ the matching $M_i$ agrees with $\tilde{M}_i$ outside of the tower gadget $T$.
	In particular the matching $M_i$ contains the edges $\{v, a_0\}$ and $\{b_0, w\}$.

	Then there exists a flip sequence $\mathcal{C'} = (C_1', \dots, C_{2h}')$ of length $2h$ from $M_1$ to $M_2$.
\end{lemma}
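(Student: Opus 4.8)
The plan is to lift the given flip sequence $\mathcal{C}=(C_1,\dots,C_{2h})$ in $G$ to a flip sequence $\mathcal{C}'=(C_1',\dots,C_{2h}')$ in $G_T$ by replacing, in each $C_i$, the edge $e$ by a carefully chosen $v$--$w$ path $Q_i$ through the tower $T$. Write $P_i = C_i\setminus\{e\}$, a simple $v$--$w$ path in $G$ avoiding $e$, hence also a path in $G_T$ avoiding $T$; I will always take $C_i' = P_i\cup Q_i$, so only the tower parts $Q_i$ remain to be specified. Setting $\tilde{M}^{(i)} := \tilde{M}_1\Delta C_1\Delta\cdots\Delta C_i$ and $M^{(i)} := M_1\Delta C_1'\Delta\cdots\Delta C_i'$, I maintain by induction on $i$ the invariant that (a) $M^{(i)}$ and $\tilde{M}^{(i)}$ agree on all edges of $G$ other than $e$, and (b) $\{v,a_0\}\in M^{(i)}$ if and only if $e\in\tilde{M}^{(i)}$, equivalently (since $e\in C_j$ for every $j$) if and only if $i$ is even. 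Given (a)--(b) and the fact that $C_i$ is $\tilde{M}^{(i-1)}$-alternating, the path $P_i$ is automatically $M^{(i-1)}$-alternating and its $M^{(i-1)}$-status at $v$ and $w$ is opposite to that of $e$ in $\tilde{M}^{(i-1)}$; hence $C_i'$ will be an $M^{(i-1)}$-alternating cycle as soon as $Q_i$ is an $M^{(i-1)}$-alternating $v$--$w$ path through $T$ whose edges at $v$ and $w$ (necessarily $\{v,a_0\}$ and $\{b_0,w\}$) have the status dictated by (b). So the whole task reduces to choosing at each step an $M^{(i-1)}$-alternating $v$--$w$ path through $T$, and to steering the matching \emph{inside} $T$ from $M_1$ to $M_2$.

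The second ingredient is an analysis of which tower-states such a path can realize. Since $\{v,a_0\},\{b_0,w\}\in M_i$, the restriction of a matching to $T$ is determined by $\mathcal{H}(\cdot)$ together with the parity bit (b), and a short parity computation shows that a valid restriction with $a_0$ matched to $v$ (resp.\ to $a_1$) forces $\mathcal{H}=\{s_1<\dots<s_p\}$ with $s_j\equiv j \pmod 2$ (resp.\ $s_j\equiv j+1\pmod 2$) and $s_p\equiv h\pmod 2$, with internal pairing $\{a_1,a_2\},\{a_3,a_4\},\dots$ (resp.\ $\{a_0,a_1\},\{a_2,a_3\},\dots$) below level $s_1$ and symmetrically on the $b$-side. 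From this one reads off two elementary moves, each realized by digging straight down the $a$-side to some level $\ell$, crossing, and returning up the $b$-side: move $(R)$, with $\ell = d(M^{(i-1)}) = \min\mathcal{H}$, produces an alternating path and replaces $\mathcal{H}$ by $\mathcal{H}\setminus\{\ell\}$ (this is essentially the content of \cref{claim:path-properties}); move $(A)$, with any $\ell<\min\mathcal{H}$ of the parity prescribed by the state (even if $a_0$ is currently matched to $v$, odd otherwise), also produces an alternating path and replaces $\mathcal{H}$ by $\mathcal{H}\cup\{\ell\}$ without disturbing the horizontal edges above $\ell$. Both moves toggle the parity bit (b), consistently with the step count.

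With these moves in hand I choreograph the $2h$ steps in three phases. Put $p_1=|\mathcal{H}(M_1)|$, $p_2=|\mathcal{H}(M_2)|$; then $p_1,p_2\le h$ and, by the parity constraint above, $p_1\equiv p_2\equiv h\pmod 2$. \textbf{Phase A} (steps $1,\dots,p_1$): apply $(R)$ repeatedly, deleting the current minimum of $\mathcal{H}$; after $p_1$ steps the tower-state is empty, and one checks that every intermediate state is a valid tower-state of the correct parity, so $(R)$ stays applicable. \textbf{Phase B} (steps $p_1+1,\dots,p_1+p_2$): writing $\mathcal{H}(M_2)=\{t_1<\dots<t_{p_2}\}$, insert $t_{p_2},t_{p_2-1},\dots,t_1$ in this order via $(A)$; the parity required to insert $t_j$ is met precisely because $\mathcal{H}(M_2)$ is itself a valid tower-state of $M_2$ (so its elements alternate in parity) and the parity bit has toggled accordingly, while the constraint $t_j<t_{j+1}$ holds by construction, so after Phase B the tower-state equals $\mathcal{H}(M_2)$. \textbf{Phase C} (the remaining $2h-p_1-p_2$ steps, a non-negative even number): pad with alternating pairs $(R)$ then $(A)$ deleting and re-inserting $t_1$, each pair returning to $\mathcal{H}(M_2)$. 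After all $2h$ steps, $M^{(2h)}$ agrees with $\tilde{M}_2$ off $e$ by (a), satisfies $\{v,a_0\}\in M^{(2h)}$ by (b) (as $2h$ is even), and has $\mathcal{H}(M^{(2h)})=\mathcal{H}(M_2)$; since these data determine a perfect matching of $G_T$ uniquely, $M^{(2h)}=M_2$, which is what we need.

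I expect the main obstacle to be the parity bookkeeping in the second paragraph: one must check carefully that the dig-down paths are genuinely alternating for \emph{both} values of the parity bit, that move $(A)$ is available at exactly the claimed levels and leaves the higher horizontal edges untouched, and that the targeted levels $t_j$ in Phase B carry the right parity at the moment they are inserted. Once these elementary facts are in place, Phases A--C and the length accounting are routine.
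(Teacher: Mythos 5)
Your proof follows essentially the same approach as the paper's: both decompose the tower modification into removing the horizontal edges of $M_1$ from the bottom up, then adding the horizontal edges of $M_2$ from the top down, padding to reach length $2h$, and finally gluing each tower path onto the corresponding $C_i$ via the parity correspondence (your invariant (b)). The paper works directly with the dig-down paths $P_k$ rather than naming moves $(R)$ and $(A)$, and your parity bookkeeping in the second paragraph is considerably more explicit than the paper's, which only briefly notes that $\{v,a_0\}$ and $\{b_0,w\}$ are flipped an even number of times. That extra care is a genuine improvement in readability, since verifying that $C_i'$ is an alternating cycle at the boundary vertices $v,w$ is exactly the point the paper leaves implicit.

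There is one small step that would fail as written: your Phase~C pads by deleting and re-inserting $t_1=\min\mathcal{H}(M_2)$, which presupposes $\mathcal{H}(M_2)\neq\emptyset$. But $p_2=0$ is possible whenever $h$ is even (take $M_2$ restricted to $T$ to be $\{v,a_0\},\{b_0,w\}$ together with $\{a_1,a_2\},\dots,\{a_{h-1},a_h\}$ and likewise on the $b$-side), and in that case $(R)$ cannot be applied from the empty state. The fix is to pad instead with the pair $(A)$ at $\ell=0$ followed by $(R)$ at $\ell=0$ — that is, with the length-$3$ path $(v,a_0,b_0,w)$, which is exactly what the paper does; this path is always an alternating $v$--$w$ path regardless of the tower state, and it preserves $\mathcal{H}$ after each pair. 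With that modification (and noting that $(A)$ at $\ell=0$ has the correct parity since $0$ is even and the parity bit at the start of Phase~C is always even), Phase~C goes through and the rest of your argument is sound.
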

\begin{proof}
	The idea is to first construct a local transformation with $v$-$w$ paths in the tower $T$ to transform the restriction of $M_1$ to $T$ to the restriction of $M_2$ to $T$.
	These can afterwards be combined with the cycles of $\mathcal{C}$ to obtain $\mathcal{C}'$.

	As noted in the proof of \cref{claim:path-properties}, every $v$--$w$ path $P$ in the tower $T$ is uniquely defined by a number $k$ such that $\{a_k, b_k\}\in E(P)$.
	Let $P_k$ denote the unique $v$--$w$ path with $\{a_k, b_k\}\in E(P_k)$, i.e.\ $P_k$ is the path with vertex sequence $(v, a_0, a_1, \dots, a_k, b_k, \dots, b_1, b_0, w)$.
	Pause to note that for any perfect matching $N$ in $G_T$ the path $P_{d(N)}$ is $N$-alternating; by definition of $d(N)$.

	Let $\horizontal(M_1)=\{h_1, \dots, h_r\}$ with $h_1<h_2<\dots < h_r$ denote the horizontal indices of $M_1$.
	Note that $h_1 > 0$, as $M_1$ has to match $v$ with $a_0$ and $w$ with $b_0$.
	Consider the collection $\mathcal{P}_1 = (P_{h_1}, \dots, P_{h_r})$.
	By definition we have $h_1 = d(M_1)$ and $P_{h_1}$ is $M_1$-alternating.
	Thus $M_1\Delta P_{h_1}$ is a matching.
	Observe that $d(M_1\Delta P_{h_1}) = h_2$, and hence $P_{h_2}$ is $M_1\Delta P_{h_1}$-alternating.
	Iterating this argument gives $d(M_1\Delta P_{h_1} \Delta \dots \Delta P_{h_{i-1}}) = h_{i-1}$ and thus $P_{h_i}$ is $M_1\Delta P_{h_1} \Delta \dots \Delta P_{h_{i-1}}$-alternating.

	\begin{figure}[ht]
		\centering
		\scalebox{0.75}{
			\begin{tikzpicture}[scale = 0.8]

	\pgfdeclarelayer{background}
	\pgfdeclarelayer{paths}
	\pgfsetlayers{paths, background,main}
    \setcounter{towerIndex}{1}
    \setbool{towerlabels}{false}

    \begin{scope}
    	\coordinate (v) at (-0.3,0);
    	\coordinate (w) at (3.3,0);

    	\begin{scope}[every node/.style={thick,draw=black,fill=white,circle,minimum size=17, inner sep=2pt}]
    		\node at (v) {};
    		\node at (w) {};
    	\end{scope}

        \begin{scope}
            \node at (v){$v$};
            \node at (w) {$w$};
        \end{scope}

        \edgeWithTowers{0}{0}{3}{0}{1}{5}

        \begin{pgfonlayer}{background}
        \begin{scope}[matching]
            \draw (v) -- (1a0);
            \draw (w) -- (1b0);
            \draw (1a1) -- (1b1);
            \draw (1a2) -- (1b2);
            \draw (1a3) -- (1a4);
            \draw (1b3) -- (1b4);
            \draw (1a5) -- (1b5);
        \end{scope}
    \end{pgfonlayer}

    \begin{pgfonlayer}{paths}
    \begin{scope}[black!20!white]
        \flippath{v,1a0,1a1,1b1,1b0,w}{15pt}
    \end{scope}
    \end{pgfonlayer}

    \end{scope}

    \begin{scope}[xshift=5.3cm]
    	\coordinate (v) at (-0.3,0);
    	\coordinate (w) at (3.3,0);

        \begin{scope}[every node/.style={thick,draw=black,fill=white,circle,minimum size=17, inner sep=2pt}]
            \node at (v) {};
            \node at (w) {};
        \end{scope}

        \begin{scope}
            \node at (v){$v$};
            \node at (w) {$w$};
        \end{scope}

        \edgeWithTowers{0}{0}{3}{0}{1}{5}

   \begin{pgfonlayer}{background}

   	\begin{scope}[line width=2pt]
   		\draw (v) -- (w);
   	\end{scope}
        \begin{scope}[matching]
            \draw (2a0) -- (2a1);
            \draw (2b0) -- (2b1);
            \draw (2a2) -- (2b2);
            \draw (2a3) -- (2a4);
            \draw (2b3) -- (2b4);
            \draw (2a5) -- (2b5);
        \end{scope}
    \end{pgfonlayer}

    \begin{pgfonlayer}{paths}
    \begin{scope}[black!20!white]
        \flippath{v,2a0,2a2,2b2,2b0,w}{15pt}
    \end{scope}
    \end{pgfonlayer}

    \end{scope}

    \begin{scope}[xshift=10.6cm]
    	\coordinate (v) at (-0.3,0);
    	\coordinate (w) at (3.3,0);

    	\begin{scope}[every node/.style={thick,draw=black,fill=white,circle,minimum size=17, inner sep=2pt}]
    		\node at (v) {};
    		\node at (w) {};
    	\end{scope}

        \begin{scope}
            \node at (v){$v$};
            \node at (w) {$w$};
        \end{scope}

        \edgeWithTowers{0}{0}{3}{0}{1}{5}

        \begin{pgfonlayer}{background}
        \begin{scope}[matching]
            \draw (v) -- (3a0);
            \draw (w) -- (3b0);
            \draw (3a2) -- (3a1);
            \draw (3b2) -- (3b1);
            \draw (3a3) -- (3a4);
            \draw (3b3) -- (3b4);
            \draw (3a5) -- (3b5);
        \end{scope}
    \end{pgfonlayer}

    \begin{pgfonlayer}{paths}
    \begin{scope}[black!20!white]
        \flippath{v,3a0,3a5,3b5,3b0,w}{15pt}
    \end{scope}
    \end{pgfonlayer}

    \end{scope}

    \begin{scope}[xshift=15.9cm]
    	\coordinate (v) at (-0.3,0);
    	\coordinate (w) at (3.3,0);

    	\begin{scope}[every node/.style={thick,draw=black,fill=white,circle,minimum size=17, inner sep=2pt}]
    		\node at (v) {};
    		\node at (w) {};
    	\end{scope}

        \begin{scope}
            \node at (v){$v$};
            \node at (w) {$w$};
        \end{scope}

        \edgeWithTowers{0}{0}{3}{0}{1}{5}

        \begin{pgfonlayer}{background}

        	\begin{scope}[line width=2pt]
        		\draw (v) -- (w);
        	\end{scope}

        	\begin{scope}[matching]
        		\draw (4a0) -- (4a1);
        		\draw (4b0) -- (4b1);
        		\draw (4a2) -- (4a3);
        		\draw (4b2) -- (4b3);
        		\draw (4a4) -- (4a5);
        		\draw (4b4) -- (4b5);
        	\end{scope}
        \end{pgfonlayer}
    \end{scope}
\end{tikzpicture} 		}
		\caption{Example of the transformation from \cref{lemma:distance-bound}.
			We transform an arbitrary matching in a tower $T$ to the unique matching that covers all $a_i,b_i$ using no horizontal edges.
			In each step we mark the alternating $v$-$w$ path we use.}
		\label{fig:enter-label}
	\end{figure}

	Altogether we get that the restriction $N$ of $M_1\Delta P_{h_1} \Delta \dots \Delta P_{h_{r}}$ to $T$ is a matching covering the vertices $a_i$ and $b_i$ for all $i\in \{0, \dots, h\}$ that contains no horizontal edge.
	In particular $N$ does not contain $\{a_h,b_h\}$.
	Thus, it has to match $a_h$ and $b_h$ to $a_{h-1}$ and $b_{h-1}$, respectively.
	Repeating this argument shows that every such matching has to match $a_{h-2i}$ and $b_{h-2i}$ to $a_{h-2i-1}$ and $b_{h-2i-1}$, respectively, for $i \in \left\{0, \dots, \left\lfloor\frac{h-1}{2}\right\rfloor\right\}$.

	Doing likewise for $M_2$ gives rise to a collection $\mathcal{P}_2$ of paths, which transforms $M_2$ to a perfect matching whose restriction to $T$ contains no horizontal edges.
	By the above argument, this property uniquely defines the restriction of perfect matchings to $T$, and thus this restriction must coincide with $N$.

	Let $\mathcal{P}$ be the collection of paths consisting of the paths of $\mathcal{P}_1$, followed by the paths of $\mathcal{P}_2$ in reverse order.
	Note that $|\mathcal{P}| = |\horizontal(M_1)|+|\horizontal(M_2)|\leq 2h$.
	Furthermore when flipping all the paths of $\mathcal{P}$ the restriction of $M_1$ to $T$ is transformed to the restriction of $M_2$ to $T$.
	In order to get the correct number of paths, we append to $\mathcal{P}$ the path $P=(v,a_0, b_0, w)$ a number of times such that we end up with $2h$ paths in total.
	First we need to check that the path $P$ is alternating with respect to the restriction of $M_2$ to $T$.
	This is due to the fact that $M_2$ contains the edges $\{v, a_0\}$ and $\{b_0, w\}$.
	After flipping the edges of an alternating path this path remains alternating, so appending the copies of $P$ still gives a sequence of alternating paths.

	In order to get the desired collection $\mathcal{C'}$ we define $C_i'$ to be the cycle in $G_T$ we obtain from $C_i$ by replacing the edge $e$ by the path $P_i$.
	By considering $\mathcal{C}'$ separately inside of $T$ and in the rest of $G_T$ we can see that it indeed transforms $M_1$ into $M_2$.
	Inside of $T$ this follows from how we constructed $\mathcal{P}$, outside of $T$ this follows from the assumptions on $\mathcal{C}$.
	Importantly, every cycle in $\mathcal{C}'$ (by assumption on $\mathcal{C}$ and by construction) contains both the edges $\{v,a_0\}$ and $\{w,b_0\}$. Since $\mathcal{C}'$ has even size, these edges get flipped an even  number of times and thus stay in the matching when transforming $M_1$ using the flip-sequence $\mathcal{C}'$.
\end{proof}

\begin{remark}
	One can strengthen the result above slightly and show, that $2h-2$ cycles suffice, matching the lower bound we will show later in \cref{lemma:far-away-matchings}.
\end{remark}

\begin{figure}
	\begin{subfigure}[t]{0.5\textwidth}
		\centering
		\scalebox{0.6}{
			\begin{tikzpicture}[scale=1]

	\pgfdeclarelayer{background}
	\pgfdeclarelayer{paths}
	\pgfsetlayers{paths, background,main}

	\begin{scope}[every node/.style={thick,draw=black,fill=white,circle,minimum size=17, inner sep=2pt}]
		\node (u) at (-3,0){};
		\node (v) at (3,0){};
		\node (w) at (0, -0.866*6){};
	\end{scope}
	\node at (0, -0.866*6 -2){};

	\begin{scope}
		\node at (u){$u$};
		\node at (v){$v$};
		\node at (w){$w$};
	\end{scope}

	\begin{pgfonlayer}{background}
		\begin{scope}[line width=2pt]
			\draw (u) -- (v);
			\draw (v) -- (w);
			\draw (w) -- (u);
		\end{scope}
	\end{pgfonlayer}

\begin{pgfonlayer}{paths}
	\begin{scope}[black!20!white]
		\flippath{u,v,w,u}{12pt}
	\end{scope}
\end{pgfonlayer}

\end{tikzpicture} 		}
		\caption{The graph $H$ with a Hamiltonian cycle $C$ marked.}
		\label{subfig:hamiltonian_example_0}
	\end{subfigure}
	\begin{subfigure}[t]{0.5\textwidth}
		\centering
		\scalebox{0.6}{
			\begin{tikzpicture}[scale=1]

	\pgfdeclarelayer{background}
	\pgfdeclarelayer{paths}
	\pgfsetlayers{paths, background,main}
    \newcommand{\displace}{0.5}
	\newcommand{\yoffset}{4}
	\newcommand{\height}{2}

	\pgfmathsetmacro{\wwx}{3}
	\pgfmathsetmacro{\wwy}{-\displace}
	\pgfmathsetmacro{\wx}{-3}
	\pgfmathsetmacro{\wy}{-\displace}

	\pgfmathsetmacro{\uux}{-3 - 0.866*\displace}
	\pgfmathsetmacro{\uuy}{0.5*\displace}
	\pgfmathsetmacro{\ux}{-0.866*\displace}
	\pgfmathsetmacro{\uy}{6*0.866 + 0.5*\displace}

	\pgfmathsetmacro{\vx}{3 + 0.866*\displace}
	\pgfmathsetmacro{\vy}{0.5*\displace}
	\pgfmathsetmacro{\vvx}{0.866*\displace}
	\pgfmathsetmacro{\vvy}{6*0.866 + 0.5*\displace}

	\begin{scope}[every node/.style={thick,draw=black,fill=white,circle,minimum size=17, inner sep=2pt}]

		\node  (v1) at (\vx,\vy) {};
		\node  (v2) at (\vvx,\vvy) {};
		\node  (w2) at (\wwx,\wwy) {};
		\node  (w1) at (\wx,\wy) {};
		\node  (u2) at (\uux,\uuy) {};
		\node  (u1) at (\ux,\uy) {};
	\end{scope}

	\begin{scope}
		\node at (v1){$v_1$};
		\node at (v2){$v_2$};
		\node at (w1){$w_1$};
		\node at (w2){$w_2$};
		\node at (u1){$u_1$};
		\node at (u2) {$u_2$};
	\end{scope}

\begin{pgfonlayer}{background}
	\begin{scope}[line width=2pt]
		\draw (v1) -- (v2);
		\draw (w1) -- (w2);
		\draw (u1) -- (u2);
	\end{scope}

	\begin{scope}[line width=2pt]
		\draw (v1) -- (w2);
		\draw (v1) -- (u2);
		\draw (w1) -- (v2);
		\draw (w1) -- (u2);
		\draw (u1) -- (v2);
		\draw (u1) -- (w2);
	\end{scope}
\end{pgfonlayer}

	\setbool{towerlabels}{false}
    \setcounter{towerIndex}{1}

	\edgeWithTowers{\vvx}{\vvy}{\vx}{\vy}{3}{3}
	\edgeWithTowers{\wwx}{\wwy}{\wx}{\wy}{3}{3}
	\edgeWithTowers{\uux}{\uuy}{\ux}{\uy}{3}{3}

	\begin{pgfonlayer}{background}

	\begin{scope}[matching]
		\draw (v2) -- (1a0);
		\draw (1b0) -- (2a0);
		\draw (2b0) -- (3a0);
		\draw (3b0) -- (v1);

		\draw (w2) -- (4a0);
		\draw (4b0) -- (5a0);
		\draw (5b0) -- (6a0);
		\draw (6b0) -- (w1);

		\draw (u2) -- (7a0);
		\draw (7b0) -- (8a0);
		\draw (8b0) -- (9a0);
		\draw (9b0) -- (u1);

		\draw (1a1) -- (1b1);
		\draw (1a2) -- (1b2);
		\draw (1a3) -- (1b3);

		\draw (2a1) -- (2a2);
		\draw (2b1) -- (2b2);
		\draw (2a3) -- (2b3);

		\draw (3a1) -- (3b1);
		\draw (3a2) -- (3a3);
		\draw (3b2) -- (3b3);

		\draw (4a1) -- (4b1);
		\draw (4a2) -- (4b2);
		\draw (4a3) -- (4b3);

		\draw (5a1) -- (5b1);
		\draw (5a2) -- (5a3);
		\draw (5b2) -- (5b3);

		\draw (6a1) -- (6b1);
		\draw (6a2) -- (6a3);
		\draw (6b2) -- (6b3);

		\draw (7a1) -- (7a2);
		\draw (7b1) -- (7b2);
		\draw (7a3) -- (7b3);

		\draw (8a1) -- (8b1);
		\draw (8a2) -- (8b2);
		\draw (8a3) -- (8b3);

		\draw (9a1) -- (9b1);
		\draw (9a2) -- (9b2);
		\draw (9a3) -- (9b3);

	\end{scope}
\end{pgfonlayer}

\begin{pgfonlayer}{paths}
	\begin{scope}[black!20!white]
		\flippath{v2,1a0,1a1,1b1,1b0,2a0,2a3,2b3,2b0,3a0,3a0,3b0,3b0,v1,
				  w2,4a0,4a1,4b1,4b0,5a0,5a1,5b1,5b0,6a0,6a0,6b0,6b0,w1,
				  u2,7a0,7a3,7b3,7b0,8a0,8a0,8b0,8b0,9a0,9a1,9b1,9b0,u1,
				  v2}{10pt}
	\end{scope}
\end{pgfonlayer}

\end{tikzpicture} 		}
		\caption{A matching $M$ in $G_H$. Using $C$, one can find a cycle $C_1$, marked in gray, that goes through every tower.}
		\label{subfig:hamiltonian_example_1}
	\end{subfigure}
	\begin{subfigure}[t]{0.5\textwidth}
		\centering
		\scalebox{0.6}{
			\begin{tikzpicture}[scale=1]

	\pgfdeclarelayer{background}
	\pgfdeclarelayer{paths}
	\pgfsetlayers{paths, background,main}
    \newcommand{\displace}{0.5}
	\newcommand{\yoffset}{4}
	\newcommand{\height}{2}

	\pgfmathsetmacro{\wwx}{3}
	\pgfmathsetmacro{\wwy}{-\displace}
	\pgfmathsetmacro{\wx}{-3}
	\pgfmathsetmacro{\wy}{-\displace}

	\pgfmathsetmacro{\uux}{-3 - 0.866*\displace}
	\pgfmathsetmacro{\uuy}{0.5*\displace}
	\pgfmathsetmacro{\ux}{-0.866*\displace}
	\pgfmathsetmacro{\uy}{6*0.866 + 0.5*\displace}

	\pgfmathsetmacro{\vx}{3 + 0.866*\displace}
	\pgfmathsetmacro{\vy}{0.5*\displace}
	\pgfmathsetmacro{\vvx}{0.866*\displace}
	\pgfmathsetmacro{\vvy}{6*0.866 + 0.5*\displace}

	\begin{scope}[every node/.style={thick,draw=black,fill=white,circle,minimum size=17, inner sep=2pt}]

		\node  (v1) at (\vx,\vy) {};
		\node  (v2) at (\vvx,\vvy) {};
		\node  (w2) at (\wwx,\wwy) {};
		\node  (w1) at (\wx,\wy) {};
		\node  (u2) at (\uux,\uuy) {};
		\node  (u1) at (\ux,\uy) {};
	\end{scope}

	\begin{scope}
		\node at (v1){$v_1$};
		\node at (v2){$v_2$};
		\node at (w1){$w_1$};
		\node at (w2){$w_2$};
		\node at (u1){$u_1$};
		\node at (u2) {$u_2$};
	\end{scope}

\begin{pgfonlayer}{background}
	\begin{scope}[line width=2pt]
		\draw (v1) -- (v2);
		\draw (w1) -- (w2);
		\draw (u1) -- (u2);
	\end{scope}

	\begin{scope}[line width=2pt]
		\draw (v1) -- (w2);
		\draw (v1) -- (u2);
		\draw (w1) -- (v2);
		\draw (w1) -- (u2);
		\draw (u1) -- (v2);
		\draw (u1) -- (w2);
	\end{scope}
\end{pgfonlayer}

	\setbool{towerlabels}{false}
    \setcounter{towerIndex}{1}

	\edgeWithTowers{\vvx}{\vvy}{\vx}{\vy}{3}{3}
	\edgeWithTowers{\wwx}{\wwy}{\wx}{\wy}{3}{3}
	\edgeWithTowers{\uux}{\uuy}{\ux}{\uy}{3}{3}

	\begin{pgfonlayer}{background}

	\begin{scope}[matching]
		\draw (v2) -- (u1);
		\draw (u2) -- (w1);
		\draw (w2) -- (v1);

		\draw (1a0) -- (1a1);
		\draw (1b0) -- (1b1);
		\draw (1a2) -- (1b2);
		\draw (1a3) -- (1b3);

		\draw (2a1) -- (2a0);
		\draw (2b1) -- (2b0);
		\draw (2a3) -- (2a2);
		\draw (2b2) -- (2b3);

		\draw (3a0) -- (3b0);
		\draw (3a1) -- (3b1);
		\draw (3a2) -- (3a3);
		\draw (3b2) -- (3b3);

		\draw (4a0) -- (4a1);
		\draw (4b0) -- (4b1);
		\draw (4a2) -- (4b2);
		\draw (4a3) -- (4b3);

		\draw (5a0) -- (5a1);
		\draw (5b0) -- (5b1);
		\draw (5a2) -- (5a3);
		\draw (5b2) -- (5b3);

		\draw (6a1) -- (6b1);
		\draw (6a2) -- (6a3);
		\draw (6b2) -- (6b3);
		\draw (6a0) -- (6b0);

		\draw (7a1) -- (7a0);
		\draw (7b1) -- (7b0);
		\draw (7a2) -- (7a3);
		\draw (7b2) -- (7b3);

		\draw (8a1) -- (8b1);
		\draw (8a2) -- (8b2);
		\draw (8a3) -- (8b3);
		\draw (8a0) -- (8b0);

		\draw (9a0) -- (9a1);
		\draw (9b0) -- (9b1);
		\draw (9a2) -- (9b2);
		\draw (9a3) -- (9b3);

	\end{scope}
\end{pgfonlayer}

\begin{pgfonlayer}{paths}
	\begin{scope}[black!20!white]
		\flippath{v2,1a0,1a2,1b2,1b0,2a0,2a1,2b1,2b0,3a0,3a0,3b0,3b0,v1,
				  w2,4a0,4a2,4b2,4b0,5a0,5a3,5b3,5b0,6a0,6a0,6b0,6b0,w1,
				  u2,7a0,7a3,7b3,7b0,8a0,8a0,8b0,8b0,9a0,9a2,9b2,9b0,u1,
				  v2}{10pt}
	\end{scope}
\end{pgfonlayer}

\end{tikzpicture} 		}
		\caption{Flipping $C_1$ we get a new matching, $M_1$, that may differ from $M$ in every tower. Using $C$ again we can find a new cycle $C_2$, again marked in gray.}
		\label{subfig:hamiltonian_example_2}
	\end{subfigure}
	\begin{subfigure}[t]{0.5\textwidth}
		\centering
		\scalebox{0.6}{
			\begin{tikzpicture}[scale=1]

	\pgfdeclarelayer{background}
	\pgfdeclarelayer{paths}
	\pgfsetlayers{paths, background,main}
    \newcommand{\displace}{0.5}
	\newcommand{\yoffset}{4}
	\newcommand{\height}{2}

	\pgfmathsetmacro{\wwx}{3}
	\pgfmathsetmacro{\wwy}{-\displace}
	\pgfmathsetmacro{\wx}{-3}
	\pgfmathsetmacro{\wy}{-\displace}

	\pgfmathsetmacro{\uux}{-3 - 0.866*\displace}
	\pgfmathsetmacro{\uuy}{0.5*\displace}
	\pgfmathsetmacro{\ux}{-0.866*\displace}
	\pgfmathsetmacro{\uy}{6*0.866 + 0.5*\displace}

	\pgfmathsetmacro{\vx}{3 + 0.866*\displace}
	\pgfmathsetmacro{\vy}{0.5*\displace}
	\pgfmathsetmacro{\vvx}{0.866*\displace}
	\pgfmathsetmacro{\vvy}{6*0.866 + 0.5*\displace}

	\begin{scope}[every node/.style={thick,draw=black,fill=white,circle,minimum size=17, inner sep=2pt}]

		\node  (v1) at (\vx,\vy) {};
		\node  (v2) at (\vvx,\vvy) {};
		\node  (w2) at (\wwx,\wwy) {};
		\node  (w1) at (\wx,\wy) {};
		\node  (u2) at (\uux,\uuy) {};
		\node  (u1) at (\ux,\uy) {};
	\end{scope}

	\begin{scope}
		\node at (v1){$v_1$};
		\node at (v2){$v_2$};
		\node at (w1){$w_1$};
		\node at (w2){$w_2$};
		\node at (u1){$u_1$};
		\node at (u2) {$u_2$};
	\end{scope}

\begin{pgfonlayer}{background}
	\begin{scope}[line width=2pt]
		\draw (v1) -- (v2);
		\draw (w1) -- (w2);
		\draw (u1) -- (u2);
	\end{scope}

	\begin{scope}[line width=2pt]
		\draw (v1) -- (w2);
		\draw (v1) -- (u2);
		\draw (w1) -- (v2);
		\draw (w1) -- (u2);
		\draw (u1) -- (v2);
		\draw (u1) -- (w2);
	\end{scope}
\end{pgfonlayer}

	\setbool{towerlabels}{false}
    \setcounter{towerIndex}{1}

	\edgeWithTowers{\vvx}{\vvy}{\vx}{\vy}{3}{3}
	\edgeWithTowers{\wwx}{\wwy}{\wx}{\wy}{3}{3}
	\edgeWithTowers{\uux}{\uuy}{\ux}{\uy}{3}{3}

	\begin{pgfonlayer}{background}

	\begin{scope}[matching]
		\draw (v2) -- (1a0);
		\draw (1b0) -- (2a0);
		\draw (2b0) -- (3a0);
		\draw (3b0) -- (v1);

		\draw (w2) -- (4a0);
		\draw (4b0) -- (5a0);
		\draw (5b0) -- (6a0);
		\draw (6b0) -- (w1);

		\draw (u2) -- (7a0);
		\draw (7b0) -- (8a0);
		\draw (8b0) -- (9a0);
		\draw (9b0) -- (u1);

		\draw (1a1) -- (1a2);
		\draw (1b1) -- (1b2);
		\draw (1a3) -- (1b3);

		\draw (2a1) -- (2b1);
		\draw (2a2) -- (2a3);
		\draw (2b2) -- (2b3);

		\draw (3a1) -- (3b1);
		\draw (3a2) -- (3a3);
		\draw (3b2) -- (3b3);

		\draw (4a1) -- (4a2);
		\draw (4b1) -- (4b2);
		\draw (4a3) -- (4b3);

		\draw (5a1) -- (5a2);
		\draw (5b1) -- (5b2);
		\draw (5a3) -- (5b3);

		\draw (6a1) -- (6b1);
		\draw (6a2) -- (6a3);
		\draw (6b2) -- (6b3);

		\draw (7a1) -- (7a2);
		\draw (7b1) -- (7b2);
		\draw (7a3) -- (7b3);

		\draw (8a1) -- (8b1);
		\draw (8a2) -- (8b2);
		\draw (8a3) -- (8b3);

		\draw (9a1) -- (9a2);
		\draw (9b1) -- (9b2);
		\draw (9a3) -- (9b3);

	\end{scope}
\end{pgfonlayer}

\end{tikzpicture} 		}
		\caption{Flipping $C_2$ we can again modify $M_1$ in every tower.}
		\label{subfig:hamiltonian_example_3}
	\end{subfigure}
	\caption{Visualization of the idea used in the construction in the proof of \cref{thm:Hamiltonian}.
		The number and height of the towers, the matchings and the augmenting paths may differ from the actual values used in the proof.
		The graph $H$ and the auxiliary graph $G_H$ coincide with the graphs in \cref{fig:complete-construction}.
		The Hamiltonian cycle $C$ in $H$, see \cref{subfig:hamiltonian_example_0}, can be used to modify a matching in all towers simultaneously.
		Note, that the matchings in \cref{subfig:hamiltonian_example_1} and \cref{subfig:hamiltonian_example_3} contain every second edge on the subdivisions of the edges $v_1, v_2$ for $v\in V_H$, while the matching in \cref{subfig:hamiltonian_example_2} instead uses edges of the form $\{v_1, w_2\}$ for $\{v,w\}\in C$.
	}
	\label{fig:visualization_hamiltonian}
\end{figure}

Equipped with the above lemma we are set to finish the proof of \cref{thm:Hamiltonian}.

\bigskip
\par\noindent
\emph{Proof of \cref{thm:Hamiltonian}.}\quad
Let $M_1$ and $M_2$ be two matchings in $G_H$.

First we want to transform $M_1$ and $M_2$ to matchings for which we can apply \cref{lemma:distance-bound}.
To do so consider an auxiliary matching $M$ as follows.
On every edge $\{v_1, v_2\}$, which we subdivided $2t$-times when constructing the towers, every other edge is part of $M$, including the edges incident to $v_1$ and $v_2$.
So if the towers on $\{v_1,v_2\}$ are $(T_1, \dots, T_t)$ in this order, then the edges $\{v_1, a_0^{T_1}\}$, $\{b_0^{T_t}, w\}$ as well as the edges $\{b_0^{T_{i}}, a_0^{T_{i+1}}\}$ for $i\in \{1, \dots, t-1\}$ are part of $M$.
We extend $M$ to a perfect matching by an arbitrary matching in every tower $T$, e.g.\ by matching $a_i^T$ with $b_i^T$ for all $i\in \{1,\dots, h\}$.

Now consider the symmetric difference $M_1 \Delta M$.
It can be written as a union $\mathcal{C}_{\text{all}}$ of $M_1$-alternating cycles.
Let $\mathcal{C}$ be the set of cycles of $\mathcal{C}_{\text{all}}$, that contain a vertex of $V_1$ or $V_2$.
Then we have $|\mathcal{C}|\leq |V_1|+|V_2| = 2n$.
Let $N_1$ be the matching we obtain from $M_1$ by flipping the cycles of $\mathcal{C}$.
Then $N_1$ matches the vertices of $V_1$ and $V_2$ in the same way as $M$ does.
In particular considering an edge $\{v_1,v_2\}$ on which we added $t$ towers $(T_1, \dots, T_t)$ in this order, then the edges $\{v_1, a_0^{T_1}\}$, $\{b_0^{T_t}, w\}$ are part of $N_1$.
Using the structure of the towers, in particular that they contain an even number of vertices, we can deduce that $N_1$ contains the edges $\{b_0^{T_{i}}, a_0^{T_{i+1}}\}$ for $i\in \{1, \dots, t-1\}$.

In the same way as described above for $M_1$, we can also find a perfect matching $N_2$ that is obtained from $M_2$ by flipping at most $2n$ alternating cycles and such that $N_2$ matches the vertices in $V_1 \cup V_2$ as well as all the vertices $a_0^T, b_0^T$ for all towers $T$ in the same way that $M$ does.

Let $C=(v_1, \dots , v_n, v_1)$ be the vertex order of a Hamiltonian cycle in $H$.
Consider the graph $G_H'$, which we recall was obtained from $H$ by splitting every node into two copies.
In $G_H'$ we define the cycle $C'=(v_1^1, v_1^2, v_2^1, v_2^2,\dots, v_n^2, v_1^1)$ and consider the cycle collection $\mathcal{C}$ consisting of $2h$ copies of $C'$.

Let us assume we obtained $G_H$ from $G_H'$ by adding the towers $T_1, \dots, T_k$ in this order.
Let $N_1^j$ and $N_2^j$ be the matchings we obtain from $N_1$ and $N_2$, respectively, by replacing the towers $T_{j+1}, \dots, T_k$ by a single matching edge.
Due to the construction above this gives perfect matchings in the graph we obtain from $G_H'$ by constructing the towers $T_1,\dots, T_j$.

Inductively we will show, that there is a flip sequence of length $\ell$ from $N_1^j$ to $N_2^j$ of length $2h$.
For $j=0$ the matchings $N_1^0$ and $N_2^0$ agree.
Namely, by construction of the auxiliary matching $M$ they are both the matching in $G_H'$ that consists of the edges $\{v^1,v^2\}$ for $v\in H$.
To conclude assume that we have a flip sequence $\mathcal{C}_j$ from $N_1^j$ to $N_2^j$.
Then we can apply \cref{lemma:distance-bound} to the tower $T_{j+1}$ to extend it to a flip sequence $\mathcal{C}_{j+1}$ from $N_1^{j+1}$ to $N_2^{j+1}$.

In particular combining this with the flip sequence from $M_1$ to $N_1$ and from $M_2$ to $N_2$ (in reverse) we proved that we can reach $M_2$ from $N_1$ by flipping $2h + 4n$ many cycles.
As $M_1$ and $M_2$ were arbitrary perfect matchings this shows that any two vertices of $P_H$ have distance at most $2h+4n$, finishing the proof of \cref{thm:Hamiltonian}.
\qed

\subsection{Proof of \cref{thm:non-Hamiltonian}}

In order to prove \cref{thm:non-Hamiltonian}, we have to prove that if $H$ is not Hamiltonian, then the diameter of $P_{G_H}$ is relatively large, namely at least $\frac{n}{n-1}(2h-2)$.
We will first consider a single tower $T$.
In $T$ we will construct two matchings, such that every transformation from one to the other using $v$--$w$ paths only, requires $2h-2$ many paths.
We will use this to finally construct two matchings in $G_H$, such that every flip sequence from one to the other has length at least $\frac{n}{n-1}(2h-2)$.
This is collected in the following technical Lemma.

\begin{lemma}\label{lemma:far-away-matchings}
	Let $G=(V,E)$ be a graph, $e=\{v,w\}\in E$ be an edge of $G$ and $\tilde{M}_1$ and $\tilde{M}_2$ be two perfect matchings in $G$ that both contain $e$.
	Consider the graph $G_T$ obtained by constructing a tower $T$ of height $h$ on $e$.
	Then there exist two perfect matchings $M_1$ and $M_2$ in $G_T$, such that $M_i$ agrees with $\tilde{M}_i$ outside of the tower $T$, with the following property: For every flip sequence $\mathcal{C} = (C_1, \dots, C_\ell)$ from $M_1$ to $M_2$, we have that $\mathcal{C}$ contains a cycle that is fully contained in $T$ or $\mathcal{C}$ contains at least $2h-2$ cycles touching the tower $T$.
\end{lemma}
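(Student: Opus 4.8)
The plan is to pick $M_1$ and $M_2$ so that, inside the tower $T$, their sets of horizontal rungs are as different as possible, and then to exploit the two structural facts recorded in \cref{claim:path-properties}: flipping a cycle that touches $T$ changes $\horizontal$ by exactly one level, and such a flip can \emph{delete} a horizontal rung only when that rung is currently the deepest one. Together these force any flip sequence that avoids cycles lying entirely inside $T$ to first dismantle the tower all the way down to its deepest rung and only afterwards rebuild it, which costs about $2h$ rather than $h$ flips.

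For the construction, assume $h\ge 2$ (for $h=1$ the bound $2h-2=0$ is vacuous, and any extensions of $\tilde M_1,\tilde M_2$ to $G_T$ work). Since $\tilde M_1$ and $\tilde M_2$ both contain $e$, every perfect matching $M_i$ of $G_T$ that agrees with $\tilde M_i$ outside $T$ is forced to use the edges $\{v,a_0\}$ and $\{w,b_0\}$. I would let $M_1$ additionally use all rungs $\{a_i,b_i\}$, $i\in\{1,\dots,h\}$, so that $\horizontal(M_1)=\{1,\dots,h\}$, and let $M_2$ additionally use the rungs $\{a_i,b_i\}$, $i\in\{1,\dots,h-2\}$, together with the two vertical edges $\{a_{h-1},a_h\}$ and $\{b_{h-1},b_h\}$, so that $\horizontal(M_2)=\{1,\dots,h-2\}$. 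Both are perfect matchings of $G_T$ agreeing with the respective $\tilde M_i$ outside $T$.

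Next, fix a flip sequence $\mathcal{C}=(C_1,\dots,C_\ell)$ from $M_1$ to $M_2$ and assume no $C_j$ is fully contained in $T$. The only edges of $G_T$ joining a vertex of $T$ to a vertex outside $T$ are $\{v,a_0\}$ and $\{w,b_0\}$, so a cycle that uses a vertex of $T$ but is not contained in $T$ must use both of these edges, i.e.\ it touches $T$; hence each $C_j$ is either vertex-disjoint from $T$ --- and then uses no edge incident to a vertex of $T$, so flipping it leaves the restriction of the current matching to $E_T$ unchanged --- or it touches $T$. For a touching cycle, the argument in the proof of \cref{claim:path-properties} shows that its restriction to $T$ is the $v$--$w$ path $v,a_0,a_1,\dots,a_k,b_k,\dots,b_1,b_0,w$ for some $k\in\{0,\dots,h\}$, and flipping $C_j$ therefore toggles exactly the edges $\{v,a_0\}$, $\{w,b_0\}$, $\{a_k,b_k\}$ and the vertical edges of $T$ below level $k$; by \cref{claim:path-properties} such a flip changes $\horizontal$ by exactly one level, and if it \emph{deletes} a rung then that rung equals $d(M)=\min\horizontal(M)$.

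Finally, set $r(M)=|\horizontal(M)\cap\{1,\dots,h\}|$, so that $r(M_1)=h$ and $r(M_2)=h-2$; by the previous paragraph $r$ is left unchanged by cycles disjoint from $T$ and by touching cycles whose restriction to $T$ is the path $v,a_0,b_0,w$, and changes by exactly $\pm 1$ for every other touching cycle, so it suffices to bound the number of the latter. The level-$h$ rung lies in $M_1$ but not in $M_2$, hence is deleted at some step; consider the \emph{last} step at which it is deleted. Just before that step the rung being deleted is the deepest one present, so $\horizontal(M)=\{h\}$; therefore each of the rungs at levels $1,\dots,h-1$, all of which occur in $M_1$, has been deleted at least once earlier, which together with the deletion of the level-$h$ rung gives at least $h$ deletion steps. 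Immediately after that step $\horizontal(M)=\emptyset$, whereas $M_2$ contains the rungs at levels $1,\dots,h-2$, so each of these must be added at least once afterwards, giving at least $h-2$ addition steps. Altogether $\mathcal{C}$ contains at least $2h-2$ cycles touching $T$, which is the desired dichotomy. The step I expect to require the most care is the reduction in the third paragraph --- confirming that a cycle of the global sequence which touches $T$ acts on the interior of $T$ exactly like the associated path, so that both halves of \cref{claim:path-properties} transfer verbatim, and that no move other than the ones counted can change $r$; granting this, the ``dismantle, then rebuild'' doubling that produces the factor $2$ is immediate.
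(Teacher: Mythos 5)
Your proposal is correct and takes essentially the same route as the paper: the same choice of $M_1,M_2$ (differing only by a flip of the topmost square of the tower), the same appeal to both parts of \cref{claim:path-properties}, and the same key observation that a deletion of the level-$h$ rung forces $\horizontal$ to collapse to $\{h\}$ just before and to $\emptyset$ just after. The only cosmetic difference is the bookkeeping: you anchor on the \emph{last} deletion of the top rung and count level-by-level deletion and addition events ($h$ plus $h-2$ distinct touching steps), whereas the paper anchors on the \emph{first} deletion and tracks the size $|\horizontal|$ as it goes $h \to 0 \to h-2$; both yield the $2h-2$ bound.
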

\begin{proof}
	We will explicitly construct two such matchings $M_1$ and $M_2$.
	For all edges $f$ of $G_T$ that are not connected to the tower $T$ (these are the edges in $E\setminus \{e\}$) by assumption the matchings have to agree with $\tilde{M}_1$ and $\tilde{M}_2$, respectively.
	Within the tower $T$ let $M_1$ contain the edges $\{a_i, b_i\}$ for $i\in \{1, \dots, h\}$, together with the edges $\{v,a_0\}$ and $\{b_0, w\}$.

	Let the second matching $M_2$ coincide with $M_1$ within the tower $T$, except that it contains the edges $\{a_{h-1}, a_h\}$ and $\{b_{h-1}, b_h\}$ instead of $\{a_{h-1}, b_{h-1}\}$ and $\{a_h, b_h\}$, so
	\begin{align*}
		M_1 &= (\tilde{M_1} \setminus \{e\}) \cup \{\{v, a_0\}, \{b_0, w\}\} \cup \{\{a_i, b_i\}\colon i\in [h]\} \\
		M_2 &= (\tilde{M_2} \setminus \{e\}) \cup \{\{v, a_0\}, \{b_0, w\}\} \cup \{\{a_i, b_i\}\colon i\in [h-2]\} \cup  \left\{\{b_{h-1}, b_h\}, \{a_h, a_{h-1}\}\right\}.
	\end{align*}
	This defines perfect matchings in $G_T$, as we assumed $\tilde{M}_1$ and $\tilde{M_2}$ to contain the edge $e$.
	A visualization of $M_1$ and $M_2$ for a tower of height $5$ can be seen in \cref{fig:far-matchings-1} and \cref{fig:far-matchings-2}.

	\begin{figure}[ht]
		\centering
		\begin{subfigure}{0.45\textwidth}
			\centering
			\scalebox{0.5}{
				\begin{tikzpicture}
    \setcounter{towerIndex}{1}
    \setbool{towerlabels}{false}
    \pgfdeclarelayer{background}
    \pgfsetlayers{background,main}

    \begin{scope}[every node/.style={thick,draw=black,fill=white,circle,minimum size=17, inner sep=2pt}]
        \node  (v) at (0,0) {};
        \node  (w) at (3,0) {};
    \end{scope}

    \begin{scope}
        \node at (v){$v$};
        \node at (w) {$w$};
    \end{scope}

    \edgeWithTowers{0}{0}{3}{0}{1}{5}

	\begin{pgfonlayer}{background}
		\begin{scope}[line width=2pt]
			\draw (v) -- (w);

		\end{scope}
    \begin{scope}[matching]
        \draw (v) -- (1a0);
        \draw (w) -- (1b0);
        \draw (1a1) -- (1b1);
        \draw (1a2) -- (1b2);
        \draw (1a3) -- (1b3);
        \draw (1a4) -- (1b4);
        \draw (1a5) -- (1b5);
    \end{scope}
\end{pgfonlayer}
\end{tikzpicture} 			}
			\caption{The restriction of the matching $M_1$ to the tower $T$.}
			\label{fig:far-matchings-1}
		\end{subfigure}
		\begin{subfigure}{0.45\textwidth}
			\centering
			\scalebox{0.5}{
				\begin{tikzpicture}
	\pgfdeclarelayer{background}
	\pgfsetlayers{background,main}
    \setcounter{towerIndex}{1}
    \setbool{towerlabels}{false}
    \begin{scope}[every node/.style={thick,draw=black,fill=white,circle,minimum size=17, inner sep=2pt}]
  			\node  (v) at (0,0) {};
  			\node  (w) at (3,0) {};
  		\end{scope}

  		\begin{scope}
  			\node at (v){$v$};
  			\node at (w) {$w$};
  		\end{scope}

  		\edgeWithTowers{0}{0}{3}{0}{1}{5}

\begin{pgfonlayer}{background}
	\begin{scope}[line width=2pt]
		\draw (v) -- (w);
	\end{scope}
    \begin{scope}[matching]
        \draw (v) -- (1a0);
        \draw (w) -- (1b0);
        \draw (1a1) -- (1b1);
        \draw (1a2) -- (1b2);
        \draw (1a3) -- (1b3);
        \draw (1a4) -- (1a5);
        \draw (1b4) -- (1b5);
    \end{scope}
    \end{pgfonlayer}
\end{tikzpicture} 			}
			\caption{The restriction of the matching $M_2$ to the tower $T$.}
			\label{fig:far-matchings-2}
		\end{subfigure}

		\caption{The extension of the matchings $\tilde{M_1}$ and $\tilde{M_2}$ to a tower of height $5$, as constructed in the proof of \cref{lemma:far-away-matchings}.}
		\label{fig:far-matchings}
	\end{figure}

	Let $\mathcal{C} = (C_1, \dots, C_\ell)$ be a flip sequence from $M_1$ to $M_2$.
	Furthermore assume that none of these cycles is completely contained in the tower $T$, else we are done.
	Hence each $C_i$ is either disjoint from $T$ or it touches $T$.
	Let $c_1 < c_2 < \dots < c_r$ be the indices of the cycles that touch $T$.
	For $i\in \{1,\dots, r\}$ set $N_i = M_1 \Delta C_1 \Delta \dots \Delta C_{c_i}$.
	As $\{a_h,b_h\}\in M_1\setminus M_2$ there is a smallest index $k$, such that $\{a_h, b_h\}\notin N_k$.
	Let $N' = M_1 \Delta C_1 \Delta \dots \Delta C_{c_k-1}$ be the matching before flipping $C_{a_k}$.
	Applying the second statement of \cref{claim:path-properties}, we must have $d(N')= h$, so $\horizontal(N_k) = \emptyset$.
	By the first part of \cref{claim:path-properties} the number of horizontal edges changes by at most one when flipping along a cycle of $\mathcal{C}$.
	Finally observe that the cycles that do not touch $T$ do not change the horizontal edges when we flip them.
	As we start with $h$ horizontal edges in $M_1$, reach $0$ horizontal edges in $N_k$ and then go up to $h-2$ horizontal edges in $M_2$, we have $k\geq h$ and $r-k \geq h-2$, so $r = k + (r-k) \geq 2h-2$, finishing the proof.
\end{proof}

\begin{remark}
	Consider the case $\tilde{M}_1 = \tilde{M}_2$.
	Then the symmetric difference of the matchings $M_1$ and $M_2$ constructed in \cref{lemma:far-away-matchings} is a single alternating cycle.
	So when allowing to flip arbitrary cycles one can easily transform $M_1$ to $M_2$.
	The important difference in the above consideration is, that this cycle can only change the matchings in the given tower.
	On the other hand, a cycle touching $T$ may also touch other towers, allowing us to use a single cycle to modify many towers.
\end{remark}

With the above considerations we are ready to finish the proof of \cref{thm:non-Hamiltonian}.
\bigskip
\par\noindent
\emph{Proof of \cref{thm:non-Hamiltonian}.}\quad
The idea of the proof is to construct two specific matchings in $G_H$ with the following property:
If one can transform one of the matchings to the other by flipping less than $\frac{n}{n-1}(2h-2)$ cycles, then one can use one of these cycles to construct a Hamiltonian cycle in $H$.

We start with the perfect matching $M = \{\{v_1, v_2\}\colon v\in V(H)\}$ in $G_H'$.
We extend $M$ to the towers one after the other by using the construction of \cref{lemma:far-away-matchings}.
For this assume we obtained $G_H$ from $G'_H$ by adding the towers $(T_1,\dots, T_r)$ in this order.
Let $M_1^0=M_2^0=M$.
Assume we constructed two matchings $M_1^i$ and $M_2^i$ in the graph obtained from $G'_H$ by adding the towers $(T_1,\dots, T_i)$.
Now we can apply \cref{lemma:far-away-matchings} to extend these matchings further to $T_{i+1}$ giving rise to two matchings $M_1^{i+1}$ and $M_2^{i+1}$ in the graph obtained from $G'_H$ by adding the towers $(T_1,\dots, T_{i+1})$.
Finally set $M_1=M_1^r$ and $M_2=M_2^r$.

Let us make the following observation that will be useful later:
Consider a tower $T$ and let $G/T$ be the graph we obtain from $G_H$ when replacing the tower $T$ by a single edge.
Then $M_1$ and $M_2$ give rise to two matchings $\tilde{M}_1^T$ and $\tilde{M}_2^T$ in $G/T$, such that applying \cref{lemma:far-away-matchings} to $\tilde{M}_1^T$ and $\tilde{M}_2^T$ gives us $M_1$ and $M_2$, respectively.
So we can use the conclusion of \cref{lemma:far-away-matchings} for every tower $T$ simultaneously, although we constructed the matchings one tower at a time.

Suppose that $M_2$ was reachable from $M_1$ by flipping the cycles $\mathcal{C} = (C_1, \dots, C_m)$ with $m< \frac{n}{n-1}(2h-2)$, i.e.\ $M_2 = M_1 \Delta C_1 \Delta C_2 \Delta \dots \Delta C_m$.
First we consider the set $\mathcal{C}_{s}\subseteq \mathcal{C}$ of cycles, for which there is a tower completely containing this cycle.
We have $|\mathcal{C}_s| \leq |\mathcal{C}| < 4h = t$.
For every $v\in V_H$, we first introduced the edge $e_v = \{v_1,v_2\}$, on which we constructed $t$ towers.
In particular for one of these towers, say $T_v$, no cycle of $\mathcal{C}$ is completely contained in $T_v$.

\begin{figure}[ht]
	\begin{subfigure}[t]{0.5\textwidth}
		\centering
		\scalebox{0.5}{
			\begin{tikzpicture}[scale=1]

	\pgfdeclarelayer{background}
	\pgfdeclarelayer{paths}
	\pgfsetlayers{paths, background,main}
    \newcommand{\displace}{0.5}
	\newcommand{\yoffset}{4}
	\newcommand{\height}{2}

	\pgfmathsetmacro{\wwx}{3}
	\pgfmathsetmacro{\wwy}{-\displace}
	\pgfmathsetmacro{\wx}{-3}
	\pgfmathsetmacro{\wy}{-\displace}

	\pgfmathsetmacro{\uux}{-3 - 0.866*\displace}
	\pgfmathsetmacro{\uuy}{0.5*\displace}
	\pgfmathsetmacro{\ux}{-0.866*\displace}
	\pgfmathsetmacro{\uy}{6*0.866 + 0.5*\displace}

	\pgfmathsetmacro{\vx}{3 + 0.866*\displace}
	\pgfmathsetmacro{\vy}{0.5*\displace}
	\pgfmathsetmacro{\vvx}{0.866*\displace}
	\pgfmathsetmacro{\vvy}{6*0.866 + 0.5*\displace}

	\begin{scope}[every node/.style={thick,draw=black,fill=white,circle,minimum size=17, inner sep=2pt}]

		\node  (v1) at (\vx,\vy) {};
		\node  (v2) at (\vvx,\vvy) {};
		\node  (w2) at (\wwx,\wwy) {};
		\node  (w1) at (\wx,\wy) {};
		\node  (u2) at (\uux,\uuy) {};
		\node  (u1) at (\ux,\uy) {};
	\end{scope}

	\begin{scope}
		\node at (v1){$v_1$};
		\node at (v2){$v_2$};
		\node at (w1){$w_1$};
		\node at (w2){$w_2$};
		\node at (u1){$u_1$};
		\node at (u2) {$u_2$};
	\end{scope}

\begin{pgfonlayer}{background}
	\begin{scope}[line width=2pt]
		\draw (v1) -- (v2);
		\draw (w1) -- (w2);
		\draw (u1) -- (u2);
	\end{scope}

	\begin{scope}[line width=2pt]
		\draw (v1) -- (w2);
		\draw (v1) -- (u2);
		\draw (w1) -- (v2);
		\draw (w1) -- (u2);
		\draw (u1) -- (v2);
		\draw (u1) -- (w2);
	\end{scope}
\end{pgfonlayer}

	\setbool{towerlabels}{false}
    \setcounter{towerIndex}{1}

	\edgeWithTowers{\vvx}{\vvy}{\vx}{\vy}{3}{3}
	\edgeWithTowers{\uux}{\uuy}{\ux}{\uy}{3}{3}
	\edgeWithTowers{\wwx}{\wwy}{\wx}{\wy}{3}{3}

	\begin{pgfonlayer}{background}

	\begin{scope}[matching]
		\draw (v2) -- (1a0);
		\draw (1b0) -- (2a0);
		\draw (2b0) -- (3a0);
		\draw (3b0) -- (v1);

		\draw (w2) -- (7a0);
		\draw (4b0) -- (5a0);
		\draw (5b0) -- (6a0);
		\draw (9b0) -- (w1);

		\draw (u2) -- (4a0);
		\draw (7b0) -- (8a0);
		\draw (8b0) -- (9a0);
		\draw (6b0) -- (u1);

		\draw (1a1) -- (1b1);
		\draw (1a2) -- (1b2);
		\draw (1a3) -- (1b3);

		\draw (2a1) -- (2a2);
		\draw (2b1) -- (2b2);
		\draw (2a3) -- (2b3);

		\draw (3a1) -- (3b1);
		\draw (3a2) -- (3a3);
		\draw (3b2) -- (3b3);

		\draw (4a1) -- (4b1);
		\draw (4a2) -- (4b2);
		\draw (4a3) -- (4b3);

		\draw (5a1) -- (5b1);
		\draw (5a2) -- (5a3);
		\draw (5b2) -- (5b3);

		\draw (6a1) -- (6b1);
		\draw (6a2) -- (6a3);
		\draw (6b2) -- (6b3);

		\draw (7a1) -- (7a2);
		\draw (7b1) -- (7b2);
		\draw (7a3) -- (7b3);

		\draw (8a1) -- (8b1);
		\draw (8a2) -- (8b2);
		\draw (8a3) -- (8b3);

		\draw (9a1) -- (9b1);
		\draw (9a2) -- (9b2);
		\draw (9a3) -- (9b3);

	\end{scope}
\end{pgfonlayer}

\begin{pgfonlayer}{paths}
	\begin{scope}[black!20!white]
		\flippath{v2,1a0,1a1,1b1,1b0,2a0,2a3,2b3,2b0,3a0,3a0,3b0,3b0,v1,
				  w2,7a0,7a3,7b3,7b0,8a0,8a0,8b0,8b0,9a0,9a1,9b1,9b0,w1,
				  u2,4a0,4a1,4b1,4b0,5a0,5a1,5b1,5b0,6a0,6a0,6b0,6b0,u1,
				  v2}{10pt}
	\end{scope}
\end{pgfonlayer}

\end{tikzpicture} 		}
		\caption{}
		\label{subfig:non_hamiltonian_example_1}
	\end{subfigure}
	\begin{subfigure}[t]{0.5\textwidth}
		\centering
		\scalebox{0.5}{
			\begin{tikzpicture}[scale=1]

	\pgfdeclarelayer{background}
	\pgfdeclarelayer{paths}
	\pgfsetlayers{paths, background,main}
    \newcommand{\displace}{0.5}
	\newcommand{\yoffset}{4}
	\newcommand{\height}{2}

	\pgfmathsetmacro{\wwx}{3}
	\pgfmathsetmacro{\wwy}{-\displace}
	\pgfmathsetmacro{\wx}{-3}
	\pgfmathsetmacro{\wy}{-\displace}

	\pgfmathsetmacro{\uux}{-3 - 0.866*\displace}
	\pgfmathsetmacro{\uuy}{0.5*\displace}
	\pgfmathsetmacro{\ux}{-0.866*\displace}
	\pgfmathsetmacro{\uy}{6*0.866 + 0.5*\displace}

	\pgfmathsetmacro{\vx}{3 + 0.866*\displace}
	\pgfmathsetmacro{\vy}{0.5*\displace}
	\pgfmathsetmacro{\vvx}{0.866*\displace}
	\pgfmathsetmacro{\vvy}{6*0.866 + 0.5*\displace}

	\begin{scope}[every node/.style={thick,draw=black,fill=white,circle,minimum size=17, inner sep=2pt}]

		\node  (v1) at (\vx,\vy) {};
		\node  (v2) at (\vvx,\vvy) {};
		\node  (w2) at (\wwx,\wwy) {};
		\node  (w1) at (\wx,\wy) {};
		\node  (u2) at (\uux,\uuy) {};
		\node  (u1) at (\ux,\uy) {};
	\end{scope}

	\begin{scope}
		\node at (v1){$v_1$};
		\node at (v2){$v_2$};
		\node at (w1){$w_1$};
		\node at (w2){$w_2$};
		\node at (u1){$u_1$};
		\node at (u2) {$u_2$};
	\end{scope}

\begin{pgfonlayer}{background}
	\begin{scope}[line width=2pt]
		\draw (v1) -- (v2);
		\draw (w1) -- (w2);
		\draw (u1) -- (u2);
	\end{scope}

	\begin{scope}[line width=2pt]
		\draw (v1) -- (w2);
		\draw (v1) -- (u2);
		\draw (w1) -- (v2);
		\draw (w1) -- (u2);
		\draw (u1) -- (v2);
		\draw (u1) -- (w2);
	\end{scope}
\end{pgfonlayer}

	\setbool{towerlabels}{false}
    \setcounter{towerIndex}{1}

	\edgeWithTowersDifferentHeight{\vvx}{\vvy}{\vx}{\vy}{3}{1,3,0}
	\edgeWithTowersDifferentHeight{\uux}{\uuy}{\ux}{\uy}{3}{1,1,0}
	\edgeWithTowersDifferentHeight{\wwx}{\wwy}{\wx}{\wy}{3}{3,0,1}

 \node at (5a3){};

	\begin{pgfonlayer}{background}

	\begin{scope}[matching]
		\draw (v2) -- (1a0);
		\draw (1b0) -- (2a0);
		\draw (2b0) -- (3a0);
		\draw (3b0) -- (v1);

		\draw (w2) -- (7a0);
		\draw (4b0) -- (5a0);
		\draw (5b0) -- (6a0);
		\draw (9b0) -- (w1);

		\draw (u2) -- (4a0);
		\draw (7b0) -- (8a0);
		\draw (8b0) -- (9a0);
		\draw (6b0) -- (u1);

		\draw (1a1) -- (1b1);

		\draw (2a1) -- (2a2);
		\draw (2b1) -- (2b2);
		\draw (2a3) -- (2b3);

		\draw (4a1) -- (4b1);

		\draw (5a1) -- (5b1);

		\draw (7a1) -- (7a2);
		\draw (7b1) -- (7b2);
		\draw (7a3) -- (7b3);

		\draw (9a1) -- (9b1);

	\end{scope}
\end{pgfonlayer}

\begin{pgfonlayer}{paths}
	\begin{scope}[black!20!white]
		\flippath{v2,1a0,1a1,1b1,1b0,2a0,2a3,2b3,2b0,3a0,3a0,3b0,3b0,v1,
				  w2,7a0,7a3,7b3,7b0,8a0,8a0,8b0,8b0,9a0,9a1,9b1,9b0,w1,
				  u2,4a0,4a1,4b1,4b0,5a0,5a1,5b1,5b0,6a0,6a0,6b0,6b0,u1,
				  v2}{10pt}
	\end{scope}
\end{pgfonlayer}

\end{tikzpicture} 		}
		\caption{}
		\label{subfig:non_hamiltonian_example_2}
	\end{subfigure}
	\begin{subfigure}[t]{.5\textwidth}
		\centering
		\scalebox{0.66}{
			\begin{tikzpicture}[scale=.6]

	\pgfdeclarelayer{background}
	\pgfdeclarelayer{paths}
	\pgfsetlayers{paths, background,main}

	\begin{scope}[every node/.style={thick,draw=black,fill=white,circle,minimum size=17, inner sep=2pt}]
		\node (u) at (-3,0){};
		\node (v) at (3,0){};
		\node (w) at (0, -0.866*6){};
	\end{scope}

	\begin{scope}
		\node at (u){$u$};
		\node at (v){$v$};
		\node at (w){$w$};
	\end{scope}
 \node at (0,2){};

\begin{pgfonlayer}{background}
		\begin{scope}[line width=2pt]
			\draw (u) -- (v);
			\draw (v) -- (w);
			\draw (w) -- (u);
		\end{scope}
		\begin{scope}[line width=2pt]
			\draw (-3,0) to[bend left] (3,0);
			\draw (3,0) to[bend left] (0, -0.866*6);
			\draw (0, -0.866*6) to[bend left] (-3,0);
		\end{scope}
	\end{pgfonlayer}

\begin{pgfonlayer}{paths}
	\draw[line width=13pt, black!20!white] (-3,0) to[bend left] (3,0);
	\draw[line width=13pt, black!20!white] (3,0) to[bend left] (0, -0.866*6);
	\draw[line width=13pt, black!20!white] (0, -0.866*6) to[bend left] (-3,0);
	\end{pgfonlayer}
\end{tikzpicture} 		}
		\caption{}
		\label{subfig:non_hamiltonian_example_3}
	\end{subfigure}
	\begin{subfigure}[t]{0.5\textwidth}
		\centering
		\scalebox{0.66}{
			\begin{tikzpicture}[scale=0.6]

	\pgfdeclarelayer{background}
	\pgfdeclarelayer{paths}
	\pgfsetlayers{paths, background,main}

	\begin{scope}[every node/.style={thick,draw=black,fill=white,circle,minimum size=17, inner sep=2pt}]
		\node (u) at (-3,0){};
		\node (v) at (3,0){};
		\node (w) at (0, -0.866*6){};
	\end{scope}

	\begin{scope}
		\node at (u){$u$};
		\node at (v){$v$};
		\node at (w){$w$};
	\end{scope}
 \node at (0,2){};

	\begin{pgfonlayer}{background}
		\begin{scope}[line width=2pt]
			\draw (u) -- (v);
			\draw (v) -- (w);
			\draw (w) -- (u);
		\end{scope}
	\end{pgfonlayer}

\begin{pgfonlayer}{paths}
	\begin{scope}[black!20!white]
		\flippath{u,v,w,u}{18pt}
	\end{scope}
\end{pgfonlayer}

\end{tikzpicture} 		}
		\caption{}
		\label{subfig:non_hamiltonian_example_4}
	\end{subfigure}
	\caption{Visualization of the idea used in the proof of \cref{thm:non-Hamiltonian}.
		For the sake of simplicity, the number and height of the towers, the matchings and the augmenting paths may differ from the actual choices done in the proof.
		We start with a cycle $C$ in the graph $G_H$ that touches towers on every subdivided edge in $G_H$ (\cref{subfig:non_hamiltonian_example_1}).
		As a first step we remove all vertices that do not lie on $C$ (\cref{subfig:non_hamiltonian_example_2}).
		Next we contract the subpath of $C$ along the subdivision of the edge $\{v_1,v_2\}$ for all $v\in V_H$ (\cref{subfig:non_hamiltonian_example_3}).
		This yields a Hamiltonian cycle $\widetilde{C}$ in $\widetilde{G}$.
		Finally omitting the duplicate edges we end up with a Hamiltonian cycle in $H$ (\cref{subfig:non_hamiltonian_example_4}).}
	\label{fig:visualization_non_hamiltonian}
\end{figure}

For every $v\in V_H$ define $\mathcal{C}_v$ to be the sub-collection of $\mathcal{C}$ consisting of the cycles that touch $T_v$.
By construction of the matchings $M_1$ and $M_2$, using \cref{lemma:far-away-matchings}, we obtain that $|\mathcal{C}_v|\geq 2h-2$ for all $v\in V$.

We claim that every cycle of $\mathcal{C}$ is part of $\mathcal{C}_v$ for at most $n-1$ vertices $v$ of $H$.
Assume the opposite, so there is a cycle $C\in \mathcal{C}$ such that $C\in \mathcal{C}_v$ for all $v\in V(H)$.
We will show, that we can then construct a Hamiltonian cycle in $H$, which contradicts the assumptions of \cref{thm:non-Hamiltonian}.
The cycle $C$ contains edges from $T_v$ for every vertex $v\in V_H$.
In particular, for every vertex $v\in V_H$, $C$ contains an $v_1$-$v_2$ path through the tower gadgets on the edge $\{v_1, v_2\}$.
Now delete all vertices in $V_H \setminus V(C)$.
By the above observation this does not remove any vertex of $V_1$ or $V_2$.
Contracting parts of a cycle still gives a cycle, so if we contract the $v_1$-$v_2$ path along $C$ in both $C$ and $G_H$ for every $v\in V_H$, we still have a cycle.
By the above consideration this contracts a path through the tower gadgets of the edge $\{v_1, v_2\}$.
In particular contracting all these paths in $G_H$ gives rise to a multi-graph $\widetilde{G}$ with $|V_H|$ vertices, namely one for each contraction of the path between $v_1$ and $v_2$ on $C$, for every $v \in V_H$. Furthermore, the contraction $\widetilde{C}$ of the cycle $C$ visits all these vertices.
The only edges we did not contract or delete and thus remain in $\widetilde{G}$ are the edges of the form $\{v_1,w_2\},\{v_2,w_1\}$ for all $\{v,w\}\in E_H$. From this one can see that $\widetilde{G}$ is isomorphic to the graph obtained from $H$ by replacing every edge with a parallel pair of edges. It is easy to see that, as long as both graphs have at least $3$ vertices, any Hamiltonian cycle in $\widetilde{G}$ also induces a Hamiltonian cycle in $H$.
So in particular the contracted cycle $\mathcal{C}$ in $\widetilde{G}$ gives rise to a Hamiltonian cycle in $H$, a contradiction.
A visualization of this idea can be seen in \cref{fig:visualization_non_hamiltonian}.

So we conclude, that every cycle of $\mathcal{C}$ is part of $\mathcal{C}_v$ for at most $n-1$ vertices $v$.
To finish the proof observe that $(n-1)|\mathcal{C}|\geq \sum_{v\in V_H}|\mathcal{C}_v| \geq n(2h-2)$, so $|\mathcal{C}|\geq \frac{n}{n-1}(2h-2)$.

This shows that $M_1$ and $M_2$ correspond to vertices of the perfect matching polytope of $G_H$ at distance at least $\frac{n}{n-1}(2h-2)$, and thus $\mathrm{diam}(P_{G_H})\ge \frac{n}{n-1}(2h-2)$, as claimed.
\qed

\section{Proof of \cref{thm:monotoneperfectmatchingpolytope} via \cref{lem:4dm-reduction}}\label{sec:proof-monotone-diameter}

The remainder of this article is dedicated to the proof our second main result, \cref{thm:monotoneperfectmatchingpolytope}, based on the reduction described in \cref{sec:reduction-monotone-diameter}.
We start by proving the description of the monotone diameter of the perfect matching polytope given in \cref{lemma:monotone-to-symmetric-difference}.

\bigskip
\par\noindent
\emph{Proof of \cref{lemma:monotone-to-symmetric-difference}.}\quad
First let ${x}$ be a vertex of $P_G$ corresponding to the perfect matching $M$, and let ${c}\in \mathbb{R}^E$ be a cost function.
Let $M^*$ be a minimum cost perfect matching in $G$ with respect to the costs $c$, corresponding to the $c$-optimal vertex $y$.
Consider the cycles $\mathcal{C}$ of the symmetric difference $M\Delta M^*$.
Flipping the edges of any cycle of $\mathcal{C}$ with in $M$ does not increase the costs with respect to $c$, as $M^*$ is a minimum cost matching.
So in particular flipping the cycles of $\mathcal{C}$ that reduce the cost one after another gives rise to a monotone walk of length $|\mathcal{C}|$ from $x$ to an optimal vertex $y'$.
Note that due to cycles with an alternating cost of $0$ the optimal vertex we reach might differ from the vertex we used to guide the path.
This is not an issue, as the definition of monotone diameter only uses the distance to any optimal vertex.

As $\mathbf{x}$ and $\mathbf{c}$ were arbitrary, this proves that the monotone diameter of $P_G$ is bounded from above by the maximum number of cycles in the symmetric difference of two perfect matchings.

In order to prove the matching lower bound it is enough to show the following:
Let $M$ and $M^*$ be two perfect matchings, corresponding to the vertices $\chi^M$ and $\chi^{M^*}$ of $G$, respectively,  and let $\mathcal{C}$ denote the cycles of the symmetric difference $M\Delta M^*$.
Then there exists a cost function $c\in \mathbb{R}^E$ such that $\chi^{M^*}$ is $c$-minimal and such that the shortest $c$-monotone walk from $\chi^M$ to $\chi^{M^*}$ has length $|\mathcal{C}|$.

We will show that defining $c$ as
\begin{equation*}
	c(e) =
	\begin{cases}
		0 & \text{ if } e\in M^*,\\
		1 & \text{ if } e\in M\setminus M^*,\\
		|V| & \text{ if } e\in E\setminus\{M\cup M^*\},
	\end{cases}
\end{equation*}
fulfills the above requirements.
First, as $c$ is non-negative and $c(M^*)=0$, $M^*$ is indeed $c$-minimal.
Next observe that $c(M) \leq |M| < |V|$.
So if we consider an arbitrary monotone walk from $\chi^M$ to $\chi^{M^*}$, corresponding to flipping a sequence of negative alternating cycles, we may not use an edge of $E\setminus\{M\cup M^*\}$.
If we did, then one of the intermediate vertices would correspond to a matching containing an edge of this set.
Then the cost of that vertex is at least $|V|$, contradicting the monotonicity of the walk.

Hence the cycles corresponding to the moves of any monotone walk from $\chi^M$ to $\chi^{M^*}$ may only use the edges of $M\cup M^*$.
As the edges of $M\cap M^*$ are isolated in $M\cup M^*$, they cannot be part of a cycle in $M\cup M^*$.
Hence, we can strengthen the observation and the cycles corresponding to the moves may even only use the edges of $M\Delta M^*$.
In particular, every monotone walk from $\chi^M$ to $\chi^{M^*}$ has to flip the cycles of $M\Delta M^*$ one after another.
Thus, every monotone walk has length $|\mathcal{C}|$, finishing the proof.
\qed

Recall, that, based on \cref{lemma:monotone-to-symmetric-difference}, we showed, that in order to prove \cref{thm:monotoneperfectmatchingpolytope} it suffices to prove \cref{lem:4dm-reduction}.
We already proposed a reduction there which we will recall and make more precise below.

Let $W,X,Y,Z$ and $E$ be an instance of \textsc{4-Dimensional Matching}.
We will construct an instance of \textsc{Vertex-Disjoint $4$-Cycle Cover}, such that the answers to both problems agree. To do so we construct a bipartite graph $G$ in the following way:
For every element $a\in W\cup X\cup Y\cup Z$ we add a vertex to $G$. These vertices will be called the \emph{exterior vertices} of $G$.
Furthermore, for every element $e \in E$ we add twelve vertices and 28 edges forming a gadget as shown in \cref{fig:4dm-reduction-gadget-recall}.
More precisely, for every hyperedge $e=(w,x,y,z)\in E$ we add vertices $a_i^e$ for $a\in e$ and $i\in \{1,2,3\}$, which we will call the \emph{auxiliary vertices} of $G$ in the following.
Additionally we add the edges of
\[
E_{ext} = \{\{a, a_1^e\}, \{a^e_1, a^e_2\}, \{a^e_2,a^e_3\}, \{a^e_3, a\}\colon a\in \{w,x,y,z\}\}
\]
and
\[
E_{int} = \{\{w_i^e, x_i^e\}, \{x_i^e, y_i^e\}, \{y_i^e, z_i^e\}, \{z_i^e, w_i^e\}\colon i\in \{1,2,3\}\}.
\]
Observe that the edges of $E_{ext}$ form four vertex-disjoint cycles of length four, marked in black and solid in \cref{fig:4dm-reduction-gadget}.
The edges of $E_{int}$ form three vertex-disjoint cycles of length four, depicted in red and dashed in \cref{fig:4dm-reduction-gadget}.
Furthermore both the first and the second set of cycles cover all auxiliary vertices.
The first set additionally covers the external vertices.

The above construction indeed gives a bipartite graph, using the following bipartition:
\begin{align*}
	\begin{array}{rllll}
		U =&& \{x, x_2^e \colon x\in X, e\in E, x\in e\} &\cup& \{y_1^e, y_3^e \colon y\in Y, e\in E, y\in e\} \\ &\cup& \{z, z_2^e \colon z\in Z, e\in E, z\in e\} &\cup& \{w_1^e, w_3^e \colon w\in W, e\in E, w\in e\}
	\end{array}\\
	\begin{array}{rllll}
		V =&& \{x_1^e, x_3^e \colon x\in X, e\in E, x\in e\} &\cup& \{y, y_2^e \colon y\in Y, e\in E, y\in e\} \\ &\cup& \{z_1^e, z_3^e \colon z\in Z, e\in E, z\in e\} &\cup& \{w, w_2^e \colon w\in W, e\in E, w\in e\}.
	\end{array}
\end{align*}

\begin{figure}
	\centering
	\begin{tikzpicture}[scale=1.2]
        \newcommand{\offset}{0.5}
        \begin{scope}[every node/.style={thick,draw=black,fill=white,circle,minimum size=15, inner sep=1pt}]
                \node[fill=black!20!white] (w) at (-\offset ,-\offset) {$w$};
                \node (x) at (-\offset ,4+\offset) {$x$};
                \node[fill=black!20!white] (y) at (4+\offset ,4+\offset) {$y$};
                \node (z) at (4+\offset ,-\offset) {$z$};
        \end{scope}
        
        \begin{scope}[every node/.style={thick,draw=black,fill=white,circle,minimum size=5, inner sep=1pt}]
                \node (w1) at (1 ,0) {};
                \node[fill=black!20!white] (w2) at (1 ,1) {};
                \node (w3) at (0 ,1) {};
                
                \node[fill=black!20!white] (x1) at (0 ,3) {};
                \node (x2) at (1 ,3) {};
                \node[fill=black!20!white] (x3) at (1 ,4) {};
                
                \node (y1) at (3 ,4) {};
                \node[fill=black!20!white] (y2) at (3 ,3) {};
                \node (y3) at (4 ,3) {};
                
                \node[fill=black!20!white] (z1) at (4 ,1) {};
                \node (z2) at (3 ,1) {};
                \node[fill=black!20!white] (z3) at (3 ,0) {};
        \end{scope}
        
        \begin{scope}[every node/.style={thick,draw=black,fill=white,circle,minimum size=5, inner sep=1pt, font=\scriptsize}]
                \node (w1) at (1 ,0) {$w^e_1$};
                \node[fill=black!20!white] (w2) at (1 ,1) {$w^e_2$};
                \node (w3) at (0 ,1) {$w^e_3$};
                
                \node[fill=black!20!white] (x1) at (0 ,3) {$x^e_1$};
                \node (x2) at (1 ,3) {$x^e_2$};
                \node[fill=black!20!white] (x3) at (1 ,4) {$x^e_3$};
                
                \node (y1) at (3 ,4) {$y^e_1$};
                \node[fill=black!20!white] (y2) at (3 ,3) {$y^e_2$};
                \node (y3) at (4 ,3) {$y^e_3$};
                
                \node[fill=black!20!white] (z1) at (4 ,1) {$z^e_1$};
                \node (z2) at (3 ,1) {$z^e_2$};
                \node[fill=black!20!white] (z3) at (3 ,0) {$z^e_3$};
        \end{scope}

        \begin{scope}[line width=3pt]
            \draw (w) -- (w1);
            \draw (w1) -- (w2);
            \draw (w2) -- (w3);
            \draw (w3) -- (w);

            \draw (x) -- (x1);
            \draw (x1) -- (x2);
            \draw (x2) -- (x3);
            \draw (x3) -- (x);

            \draw (y) -- (y1);
            \draw (y1) -- (y2);
            \draw (y2) -- (y3);
            \draw (y3) -- (y);

            \draw (z) -- (z1);
            \draw (z1) -- (z2);
            \draw (z2) -- (z3);
            \draw (z3) -- (z);
        \end{scope}

        \begin{scope}[line width=3pt, orange!80!red, dashed, dash pattern=on 5pt off 2pt]
            \draw (w1) -- (x1) -- (y1) -- (z1) -- (w1);
            \draw (w2) -- (x2) -- (y2) -- (z2) -- (w2);
            \draw (w3) -- (x3) -- (y3) -- (z3) -- (w3);
        \end{scope}

    \end{tikzpicture} 	\caption{Reminder of the gadget for a hyperedge $e=(x,y,z,w)$, used in the proof of \cref{lem:4dm-reduction}.
		The nodes marked in gray form one side of a bipartition.}
	\label{fig:4dm-reduction-gadget-recall}
\end{figure}

\par\noindent
\emph{Proof of \cref{lem:4d-to-cycle-cover}.}\quad
It remains to show that the original instance $(W\cup X \cup Y \cup Z,E)$ has a $4$-dimensional matching if and only if $G$ contains a collection of $4$-cycles that cover every vertex precisely once.

First assume, that there is a $4$-dimensional matching $M$ in $(W\cup X \cup Y \cup Z,E)$.

To construct the collection of $4$-cycles, we will consider every gadget individually.
So let $G_e$ be the subgraph of $G$ corresponding to the gadget of an edge $e=(w,x,y,z)\in E$.
If $e\in M$ we take the cycles defining $E_{ext}$, four in total, else we take the cycles of $E_{int}$, three in total.
In both cases all twelve auxiliary vertices of the gadget are covered.
Additionally, as $M$ is a 4-dimensional matching, every element of $W,X, Y$ and $Z$ is part of exactly one edge and hence the corresponding vertex is covered exactly once.
Thus we indeed constructed a set of $4$-cycles that covers every vertex exactly once.

For the reverse direction assume that there is a set $\mathcal{C}$ of 4-cycles in $G$ such that every vertex is covered exactly once.
First note that the distance between any two distinct exterior vertices in $G$ is at least $3$, and thus no cycle of length $4$ in $G$ can cover more than one exterior vertex.
In particular no cycle of length $4$ can use vertices from different gadgets.
Next observe, that a gadget contains $16$ vertices, counting in the exterior vertices.
If we consider the cycles of $\mathcal{C}$ that completely lie within the gadget of $e$, they cover a number of vertices that is divisible by $4$.
By the above they have to cover all twelve auxiliary vertices.
In particular either twelve or sixteen vertices are covered, i.e.\ either all exterior vertices are covered by cycles within the gadget of $e$, or no exterior vertex is.

Consider the set $M\subseteq E$ consisting of all edges, for which the cycles contained in the corresponding gadget cover all exterior vertices.
We claim that $M$ covers every element of $W \cup X \cup Y \cup Z$ exactly once, i.e., forms a 4-dimensional matching.
To see this, consider any fixed exterior vertex $v \in W \cup X \cup Y \cup Z$.
Then $v$ is covered by a cycle, which lies completely in the gadget of some edge $e$.
As discussed above this implies that within the gadget of $e$ all four exterior vertices are covered.
Hence we have $e\in M$.
On the other hand, for every other $e\in E$ that contains the element corresponding to $v$, the gadget of $e$ does not contain a cycle covering $v$, so $e\notin M$.
So every element of $W, X, Y$ and $Z$ is covered exactly once by $M$, finishing the reduction.
\qed

\paragraph*{Acknowledgement.} The second author would like to thank Jean Cardinal for bringing the open problem of the complexity of circuit diameters to his attention, and for lively discussions on this topic at ETH Zurich in 2022, that sparked the work presented in this paper. We also would like to thank Steffen Borgwardt and Weston Grewe for comments on the manuscript.

\printbibliography

\end{document}